\documentclass[10pt,a4paper,article, twocolumn]{memoir} 
\usepackage[margin=0.8in]{geometry}

\counterwithout{section}{chapter}

\RequirePackage{hyperref}

\usepackage[american]{babel}
\usepackage{natbib} 
    \bibliographystyle{plainnat}
    
\usepackage{mathtools}
\usepackage{booktabs} 
\usepackage{tikz} 

\usepackage{amssymb}
\usepackage{amsthm}

\usepackage{nicefrac}

\usepackage{enumitem}

\newtheorem{lemma}{Lemma}
\newtheorem{proposition}{Proposition}
\newtheorem{corollary}{Corollary}
\newtheorem{theorem}{Theorem}

\theoremstyle{definition}

\newtheorem{assumption}{Assumption}


\newcommand{\reals}{\mathbb{R}}
\newcommand{\realsnonneg}{\mathbb{R}_{\geq0}}
\newcommand{\realspos}{\mathbb{R}_{>0}}
\newcommand{\nats}{\mathbb{N}}
\newcommand{\natswith}{\mathbb{N}_0}

\newcommand{\coloneqq}{:=}

\newcommand{\states}{\mathcal{X}}
\newcommand{\gamblesX}{\smash{\reals^\states}}
\newcommand{\gamblesY}[1]{\smash{\reals^{#1}}}
\newcommand{\gamblesAc}{\smash{\reals^{A^c}}}

\newcommand{\ones}{\mathbf{1}}

\newcommand{\abs}[1]{\left\lvert #1 \right\rvert}
\newcommand{\norm}[1]{\left\lVert #1 \right\rVert}

\newcommand{\resAc}{\vert_{A^c}}
\newcommand{\upX}{\!\!\uparrow_\mathcal{X}}

\newcommand{\lsubgen}{\underline{G}}
\newcommand{\usubgen}{\overline{G}}

\newcommand{\rateset}{\mathcal{Q}}
\newcommand{\lrate}{\underline{Q}}

\newcommand{\urate}{\overline{Q}}

\newcommand{\ind}[1]{\mathbb{I}_{#1}}

\newcommand{\rateforp}{{}^P\!Q}

\title{Hitting Times for Continuous-Time Imprecise-Markov Chains}

\author{Thomas Krak}
\date{\texttt{t.e.krak@tue.nl}\\Uncertainty in Artificial Intelligence -- Eindhoven University of Technology}

\begin{document}
\maketitle

\begin{abstract}
	We study the problem of characterizing the expected hitting times for a robust generalization of continuous-time Markov chains. This generalization is based on the theory of imprecise probabilities, and the models with which we work essentially constitute sets of stochastic processes. Their inferences are tight lower- and upper bounds with respect to variation within these sets. 
	
	We consider three distinct types of these models, corresponding to different levels of generality and structural independence assumptions on the constituent processes. 
	
	Our main results are twofold; first, we demonstrate that the hitting times for all three types are equivalent. Moreover, we show that these inferences are described by a straightforward generalization of a well-known linear system of equations that characterizes expected hitting times for traditional time-homogeneous continuous-time Markov chains.
\end{abstract}

\section{Introduction}\label{sec:intro}

We consider the problem of characterizing the \emph{expected hitting times} for continuous-time \emph{imprecise-Markov chains}~\citep{skulj2015efficient,krak2017imprecise,krak2021phd,erreygers2021phd}. These are \emph{robust}, set-valued generalizations of (traditional) Markov chains~\citep{norris1998markov}, based on the theory of \emph{imprecise probabilities}~\citep{Walley:1991vk,augustin2013:itip}. From a sensitivity-analysis perspective, we may interpret these sets as hedging against model-uncertainties with respect to a model's numerical parameters and/or structural (independence) assumptions.

The inference problem of hitting times essentially deals with the question of how long it will take the underlying system to reach some particular subset of its states. This is a common and important problem in such fields as, e.g., reliability analysis, where it can capture the expected time-to-failure of a system; and epidemiology, to model the expected time-until-extinction of an epidemic. For imprecise-Markov chains, then, we are interested in evaluating these quantities in a manner that is robust against, and conservative with respect to, any variation that is compatible with one's uncertainty about the model specification.

\citet{erreygers2021phd} has recently obtained some partial results towards characterizing such inferences, but has not been able to give a complete characterization and has largely studied the finite-time horizon case. The problem of hitting times for \emph{discrete-time} imprecise-Markov chains was previously studied by~\citet{krak2019hitting,krak2020computing}. In this present work, we largely emulate and extend their results to the continuous-time setting.

We will be concerned with three different types of imprecise-Markov chains. These are all sets of stochastic processes that are in a specific sense compatible with a given set of numerical parameters, but the three types differ in the independence properties of their elements. In particular, they correspond to (i) a set of (\emph{time-})\emph{homogeneous} Markov chains, (ii) a set of (not-necessarily homogeneous) Markov chains, and (iii) a set of general---not-necessarily homogeneous nor Markovian---stochastic processes. It is known (and perhaps not very surprising) that inferences with respect to these three models do not in general agree; see e.g.~\citep{krak2021phd} for a detailed analysis of their differences.

However, our first main result in this work is that the expected hitting time is \emph{the same} for these three different types of models. Besides being of theoretical interest, we want to emphasize the power of this result: it means that even if a practitioner using Markov chains would be uncertain whether the system they are studying is truly homogeneous and/or Markovian, relaxing these assumptions would not influence inferences about the hitting times in this sense. Purely pragmatically, it also means that we can use computational methods tailored to any one of these types of models, to compute these inferences. 

Our second main result is that these hitting times are characterized by a generalization of a well-known system of equations that holds for continuous-time homogeneous Markov chains; see Proposition~\ref{prop:precise_cont_system} for this linear system.

The remainder of this paper is structured as follows. In Section~\ref{sec:prelim} we introduce the basic required concepts that we will use throughout, formalizing the notion of stochastic processes and defining the inference problem of interest. In Section~\ref{sec:imp_markov}, we define the various types of imprecise-Markov chains that we use throughout this work. We spend some effort in Section~\ref{subsec:subspace_dynamics} to study the transition dynamics of these models, from a perspective that is particularly relevant for the inference problem of hitting times. In Section~\ref{sec:hits_as_limits} we explain and sketch the proofs of our main results, and we give a summary in Section~\ref{sec:summary}.

Because we have quite a lot of conceptual material to cover before we can explain our main results, we are not able to fit any real proofs in the main body of this work. Instead, these---together with a number of technical lemmas---have largely been relegated to the supplementary material.

\section{Preliminaries}\label{sec:prelim}

Throughout, we consider a fixed, finite \emph{state space} $\states$ with at least two elements. This set contains all possible values for some abstract underlying process. An element of $\mathcal{X}$ is called a \emph{state}, and is usually generically denoted as $x\in\mathcal{X}$.

We use $\reals, \realsnonneg$, and $\realspos$ to denote the reals, the non-negative reals, and the positive reals, respectively. $\nats$ denotes the natural numbers \emph{without} zero, and we let $\natswith\coloneqq\nats\cup\{0\}$. 

For any $\mathcal{Y}\subseteq\states$, we use $\gamblesY{\mathcal{Y}}$ to denote the vector space of real-valued functions on $\mathcal{Y}$; in particular, $\gamblesX$ denotes the space of all real functions on $\states$. We use $\norm{\cdot}$ to denote the supremum norm on any such space; for any $f\in\gamblesY{\mathcal{Y}}$ we let $\norm{f}\coloneqq \max\{\abs{f(x)}\,:\,x\in\mathcal{Y}\}$. Throughout, we make extensive use of \emph{indicator functions}, which are defined for all $A\subseteq \mathcal{Y}$ as $\ind{A}(x)\coloneqq 1$ if $x\in A$ and $\ind{A}(x)\coloneqq 0$, otherwise. We use the shorthand $\ind{y}\coloneqq\ind{\{y\}}$. Let $\ones$ denote the function that is identically equal to $1$; its dimensionality is to be understood from context. 

A map $M:\gamblesY{\mathcal{Y}}\to\gamblesY{\mathcal{Y}}$ is also called an \emph{operator}, and we denote its evaluation in $f\in\gamblesY{\mathcal{Y}}$ as $Mf$. If it holds for all $\lambda\in\realsnonneg$ that $M(\lambda f)=\lambda Mf$ then $M$ is called \emph{non-negatively homogeneous}. For any non-negatively homogeneous operator on $\gamblesY{\mathcal{Y}}$, we define the induced operator norm $\norm{M}\coloneqq \sup\{\norm{Mf}\,:\,f\in\gamblesY{\mathcal{Y}},\norm{f}=1\}$. We reserve the symbol $I$ to denote the identity operator on any space; the domain is to be understood from context.

Note that any \emph{linear} operator is also non-negatively homogeneous. Moreover, if $M$ is linear it can be represented as an $\abs{\mathcal{Y}}\times\abs{\mathcal{Y}}$ matrix by arbitrarily fixing an ordering on $\mathcal{Y}$. However, without fixing such an ordering, we simply use $M(x,y)\coloneqq M\ind{y}(x)$ to denote the entry in the $x$-row and $y$-column of such a matrix, for any $x,y\in\mathcal{Y}$. For any $f\in\gamblesY{\mathcal{Y}}$ and $x\in\mathcal{Y}$ we then have $Mf(x)=\sum_{y\in\mathcal{Y}}M(x,y)f(y)$, so that $Mf$ simply represents the usual matrix-vector product of $M$ with the (column) vector $f$. In the sequel, we interchangeably refer to linear operators also as matrices. We note the well-known equality $\norm{M}=\max_{x\in\mathcal{Y}}\sum_{y\in\mathcal{Y}}\abs{M(x,y)}$ for the induced matrix norm.

\subsection{Processes \& Markov Chains}

We now turn to stochastic processes, which are fundamentally the subject of this work. The typical (measure-theoretic) way to define a stochastic process is simply as a family $(X_i)_{i\in\mathcal{I}}$ of random variables with index set $\mathcal{I}$. This index set represents the time domain of the stochastic process. The random variables are understood to be taken with respect to some underlying probability space $(\Omega_{\mathcal{I}},\mathcal{F}_{\mathcal{I}},P)$, where $\Omega_\mathcal{I}$ is a set of \emph{sample paths}, which are functions from $\mathcal{I}$ to $\states$ representing possible realizations of the evolution of the underlying process through $\states$. The random variables $X_i$, $i\in\mathcal{I}$ are canonically the maps $X_i:\omega\mapsto \omega(i)$ on $\Omega_{\mathcal{I}}$. 

However, for our purposes it will be more convenient to instead refer to the \emph{probability measure} $P$ as the stochastic process. Different processes $P$ may then be taken over the same measurable space $(\Omega_\mathcal{I},\mathcal{F}_\mathcal{I})$, using the same canonical variables $(X_i)_{i\in\mathcal{I}}$ for all these processes.

In this work we will use both \emph{discrete}- and \emph{continuous}-time stochastic processes, which corresponds to choosing $\mathcal{I}=\natswith$ or $\mathcal{I}=\realsnonneg$, respectively. In both cases we take $\mathcal{F}_\mathcal{I}$ to be the $\sigma$-algebra generated by the cylinder sets; this ensures that all functions that we consider are measurable. 

In the discrete-time case, we let $\Omega_{\natswith}$ be the set of \emph{all} functions from $\natswith$ to $\states$. A discrete-time stochastic process $P$ is then simply a probability measure on $(\Omega_{\natswith},\mathcal{F}_\mathbf{\natswith})$. Moreover, $P$ is said to be a \emph{Markov chain} if it satisfies the (discrete-time) \emph{Markov property}, meaning that
\begin{align*}
	P(X_{n+1}=x_{n+1}\,&\vert\,X_0=x_0,\ldots,X_{n}=x_n) \\
	&= P(X_{n+1}=x_{n+1}\,\vert\,X_n=x_{n})\,,
\end{align*}
for all $x_0,\ldots,x_{n+1}\in\states$ and $n\in\natswith$. If, additionally, it holds for all $x,y\in\states$ and $n\in\natswith$ that
\begin{equation*}
	P(X_{n+1}=y\,\vert\,X_n=x)=P(X_1=y\,\vert\,X_0=x)\,,
\end{equation*}
then $P$ is said to be a \mbox{(\emph{time-})\emph{homogeneous}} Markov chain. We use $\mathbb{P}_{\natswith}, \mathbb{P}_{\natswith}^{\mathrm{M}}$, and $\mathbb{P}_{\natswith}^{\mathrm{HM}}$ to denote, respectively, the set of \emph{all} discrete-time stochastic processes; the set of all discrete-time Markov chains; and the set of all discrete-time homogeneous Markov chains.

In the continuous-time case, we let $\Omega_{\realsnonneg}$ be the set of all \emph{cadlag} functions from $\realsnonneg$ to $\states$. A continuous-time stochastic process $P$ is a probability measure on $(\Omega_{\realsnonneg},\mathcal{F}_{\realsnonneg})$. The process $P$ is said to be a Markov chain if it satisfies the (continuous-time) Markov property,
\begin{align*}
	P(X_{t_{n+1}}=x_{t_{n+1}}\,&\vert\,X_{t_0}=x_{t_0},\ldots,X_{t_n}=x_{t_n}) \\
	&= P(X_{t_{n+1}}=x_{t_{n+1}}\,\vert\,X_{t_n}=x_{t_n})
\end{align*}
for all $x_{t_0},\ldots,x_{t_{n+1}}\in\states$, $t_0<\cdots< t_n\leq t_{n+1}\in\realsnonneg$, and all $n\in\natswith$. If, additionally, it holds that
\begin{equation*}
	P(X_s=y\,\vert\,X_t=x) = P(X_{s-t}=y\,\vert\,X_0=x)
\end{equation*}
for all $x,y\in\states$ and all $t,s\in\realsnonneg$ with $t\leq s$, then $P$ is said to be a \mbox{(time-)homogeneous} Markov chain. 
We use $\mathbb{P}_{\realsnonneg}, \mathbb{P}_{\realsnonneg}^{\mathrm{M}}$, and $\mathbb{P}_{\realsnonneg}^{\mathrm{HM}}$ to denote, respectively, the set of \emph{all} continuous-time stochastic processes; the set of all continuous-time Markov chains; and the set of all continuous-time homogeneous Markov chains.

We refer to~\citep{norris1998markov} for an excellent further introduction to discrete-time and continuous-time Markov chains.

\subsection{Transition Dynamics}\label{subsec:precise_dynamics}

Throughout this work, we make extensive use of operator-theoretic representations of the behavior of stochastic processes, and Markov chains in particular. The first reason for this is that such operators serve as a way to parameterize Markov chains. Moreover, they are also useful as a \emph{computational} tool, since they can often be used to express inferences of interest; see, e.g., Propositions~\ref{prop:precise_discr_system} and~\ref{prop:precise_cont_system} further on. We introduce the basic concepts below, and refer to e.g.~\citep{norris1998markov} for details.

A \emph{transition matrix} $T$ is a linear operator on $\gamblesX$ such that, for all $x\in\states$, it holds that $T(x,y)\geq 0$ for all $y\in\states$, and $\sum_{y\in\states}T(x,y)=1$. There is an important and well-known connection between Markov chains and transition matrices; for any discrete-time homogeneous Markov chain $P$, we can define the \emph{corresponding transition matrix} ${}^{P}T$ as
\begin{equation*}
	{}^{P}T(x,y)\coloneqq P(X_{1}=y\,\vert\,X_0=x)\quad\text{for all $x,y\in\states$.}
\end{equation*}
Since $P$ is a probability measure, we clearly have that ${}^{P}T$ is a transition matrix. Conversely, a given transition matrix $T$ uniquely determines a discrete-time homogeneous Markov chain $P$ with ${}^{P}T=T$, up to the specification of the initial distribution $P(X_0)$. For this reason, transition matrices are often taken as a crucial parameter to specify (discrete-time, homogeneous) Markov chains.

Analogously, for a (non-homogeneous) discrete-time Markov chain $P$, we might define a family $({}^PT_n)_{n\in\natswith}$ of \emph{time-dependent} corresponding transition matrices, with
\begin{equation*}
	{}^{P}T_n(x,y)\coloneqq P(X_{n+1}=y\,\vert\,X_n=x)\,,
\end{equation*}
for all $x,y\in\states$ and $n\in\natswith$. Conversely, any family $(T_n)_{n\in\natswith}$ of transition matrices uniquely determines a discrete-time Markov chain $P$ with ${}^{P}T_n=T_n$ for all $n\in\natswith$, again up to the specification of $P(X_0)$.

In the continuous-time setting, transition matrices are also of great importance. However, it will be instructive to first introduce rate matrices.
A \emph{rate matrix} $Q$ is a linear operator on $\gamblesX$ such that, for all $x\in\states$, it holds that $Q(x,y)\geq 0$ for all $y\in\states$ with $x\neq y$, and $\sum_{y\in\states}Q(x,y)=0$.

For any rate matrix $Q$ and any $t\in\realsnonneg$, the \emph{matrix exponential} $e^{Qt}$ of $Qt$ can be defined as~\citep{van2006study}
\begin{equation*}
	e^{Qt}\coloneqq \lim_{n\to+\infty}\bigl(I+\nicefrac{t}{n}Q\bigr)^n\,.
\end{equation*}
An alternative characterization is as the (unique) solution to the matrix ordinary differential equation~\citep{van2006study}
\begin{equation}\label{eq:matrix_exp_differential}
	\frac{\mathrm{d}}{\mathrm{d}\,s}e^{Qs} = Qe^{Qs}=e^{Qs}Q,\quad\text{with $e^{Q0}=I$.}
\end{equation}
For any $t,s\in\realsnonneg$ it holds that $e^{Q(t+s)}=e^{Qt}e^{Qs}$, and we immediately have $e^{Q0}=I$. The family $(e^{Qt})_{t\in\realsnonneg}$ is therefore called the \emph{semigroup} generated by~$Q$, and $Q$ is called the \emph{generator} of this semigroup. Moreover, for any rate matrix $Q$ and any $t\in\realsnonneg$, $e^{Qt}$ is a transition matrix~\citep[Thm 2.1.2]{norris1998markov}.

Now let us consider a continuous-time homogeneous Markov chain $P$, and define the corresponding transition matrix\footnote{Note that in continuous-time, we always have to measure the transition-time interval $[0,t]$ to specify these matrices.} ${}^PT_t$ for all $t\in\realsnonneg$ and $x,y\in\states$ as
\begin{equation}\label{eq:trans_mat_continuous}
	{}^PT_t(x,y) \coloneqq P(X_t = y\,\vert\,X_0=x)\,.
\end{equation}
It turns out that there is then a unique rate matrix $\rateforp$ associated with $P$ such that ${}^PT_t=e^{\rateforp t}$ for all $t\in\realsnonneg$. By combining Equations~\eqref{eq:matrix_exp_differential} and~\eqref{eq:trans_mat_continuous}, we can identify $\rateforp$ as
\begin{equation*}
	\rateforp = \Bigl(\frac{\mathrm{d}}{\mathrm{d}\,t}{}^PT_t\Bigr) \bigg\vert_{t=0}\,.
\end{equation*}
As before, in the other direction we have that any fixed rate matrix $Q$ uniquely determines a continuous-time homogeneous Markov chain $P$ with $\rateforp=Q$, up to the specification of $P(X_0)$. For this reason, rate matrices are often used to specify (continuous-time, homogeneous) Markov chains.

Let us finally consider a (not-necessarily homogeneous) continuous-time Markov chain $P$. For any $t,s\in\realsnonneg$ with $t\leq s$, we can then define a transition matrix ${}^PT_t^s$ with, for all $x,y\in\states$, ${}^PT_t^s(x,y) \coloneqq P(X_s=y\,\vert\,X_t=x)$.
Under appropriate assumptions of differentiability, this induces a family $(\rateforp_t)_{t\in\realsnonneg}$ of rate matrices $\rateforp_t$, as
\begin{equation}\label{eq:time_dependent_rate}
	\rateforp_t = \Bigl(\frac{\mathrm{d}}{\mathrm{d}\,s}{}^PT_t^s\Bigr) \bigg\vert_{s=t}\,.
\end{equation}
In the converse direction we might try to reconstruct the transition matrices of $P$ by solving the matrix ordinary differential equation(s)
\begin{equation}\label{eq:nonhomogen_diffential_form}
	\frac{\mathrm{d}}{\mathrm{d}\,s}{}^PT_t^s = {}^PT_t^s\rateforp_s,\quad\text{with ${}^PT_t^t=I$.}
\end{equation}
By comparing with Equation~\eqref{eq:matrix_exp_differential}, we see that in the special case where $\rateforp_s$ does not depend on $s$---that is, where $P$ is homogeneous with $\rateforp_s=\rateforp$, say---we indeed obtain ${}^PT_t^s=e^{\rateforp(s-t)}$. However, in general the \emph{non-autonomous} system~\eqref{eq:nonhomogen_diffential_form} does not have such a closed-form solution, and we cannot move beyond this implicit characterization.

\subsection{Hitting Times}

We now have all the pieces to introduce the inference problem that is the subject of this work, \emph{viz}. the \emph{expected hitting times} of some non-empty set of states $A\subset\states$ with respect to a particular stochastic process. We take this set $A$ to be fixed for the remainder of this work.

In the discrete-time case, we consider the (extended real-valued)\footnote{We agree that $0(+\infty)=0$; $(+\infty) + (+\infty) = +\infty$; and, for any $c\in\reals$, $(+\infty)+c=+\infty$ and $c(+\infty)=+\infty$ if $c>0$.} function $\tau_{\natswith}:\Omega_{\natswith}\to\realsnonneg\cup\{+\infty\}$ given by
\begin{equation*}
	\tau_{\natswith}(\omega) \coloneqq \inf\bigl\{n\in\natswith\,:\,\omega(n)\in A\bigr\}\quad\text{for all $\omega\in\Omega_{\natswith}$.}
\end{equation*}
This captures the number of steps before a process $P$ ``hits'' any state in $A$.
The expected hitting time for a discrete-time process $P$ starting in $x\in\states$ is then defined as
\begin{equation*}
	\mathbb{E}_P\bigl[\tau_{\natswith}\,\vert\,X_0=x\bigr] \coloneqq \int_{\Omega_{\natswith}} \tau_{\natswith}(\omega)\,\mathrm{d}P(\omega\,\vert\,X_0=x)\,.
\end{equation*}
We use $\mathbb{E}_P\bigl[\tau_{\natswith}\,\vert\,X_0\bigr]$ to denote the extended real-valued function on $\states$ given by $x\mapsto \mathbb{E}_P\bigl[\tau_{\natswith}\,\vert\,X_0=x\bigr]$. When dealing with homogeneous Markov chains, this quantity has the following simple characterization:
\begin{proposition}{\citep[Thm 1.3.5]{norris1998markov}}\label{prop:precise_discr_system}
	Let $P$ be a discrete-time homogeneous Markov chain with corresponding transition matrix ${}^{P}T$. Then $h\coloneqq \mathbb{E}_P\bigl[\tau_{\natswith}\,\vert\,X_0\bigr]$ is the minimal non-negative solution to the linear system\footnote{Throughout, for any $f,g\in\gamblesX$, the quantity $fg$ is understood as the pointwise product between the functions $f$ and $g$.}\footnote{Strictly speaking this requires extending the domain of ${}^PT$ to extended-real valued functions, but we will shortly introduce some assumptions that obviate such an exposition.}
	\begin{equation*}
		h = \ind{A^c} + \ind{A^c} {}^{P}Th\,.
	\end{equation*}
\end{proposition}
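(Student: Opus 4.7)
The plan is to carry out the two standard steps: (i) verify that $h$ satisfies the claimed fixed-point equation, and (ii) show minimality among non-negative solutions by iterating the equation and passing to a limit.

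For step (i), I would split by whether the starting state lies in $A$ or $A^c$. If $x\in A$, then $\tau_{\natswith}=0$ $P(\cdot\vert X_0=x)$-a.s., so $h(x)=0$, which matches the right-hand side since $\ind{A^c}(x)=0$ kills both terms. If $x\in A^c$, then $\tau_{\natswith}\geq 1$ and the shifted sample path $(X_1,X_2,\ldots)$ is, by the Markov property and homogeneity of $P$, distributed like a fresh trajectory started from $X_1$. Writing $\tau_{\natswith}=1+\tau_{\natswith}\circ\theta$ (with $\theta$ the one-step shift) and applying the tower property together with homogeneity yields $h(x)=1+\sum_{y\in\states}{}^PT(x,y)h(y)=1+{}^PTh(x)$, which coincides with $\ind{A^c}(x)+\ind{A^c}(x){}^PTh(x)$.

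For step (ii), let $g\colon\states\to\realsnonneg\cup\{+\infty\}$ be any non-negative solution. The equation forces $g(x)=0$ for $x\in A$. For $x\in A^c$ I would substitute $g$ into itself repeatedly. A single substitution gives
\begin{equation*}
g(x)=1+\sum_{y\in A^c}{}^PT(x,y)g(y),
\end{equation*}
using $g=0$ on $A$ to drop those terms. Iterating $n$ times produces, for $x\in A^c$,
\begin{equation*}
g(x)=\sum_{k=1}^{n}P(\tau_{\natswith}\geq k\,\vert\,X_0=x)+R_n(x),
\end{equation*}
where the remainder $R_n(x)=\sum_{y\in A^c}P(X_n=y,\tau_{\natswith}\geq n\,\vert\,X_0=x)\,g(y)\geq 0$. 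The partial sum is exactly $\mathbb{E}_P[\min(\tau_{\natswith},n)\,\vert\,X_0=x]$, so discarding the non-negative remainder gives $g(x)\geq\mathbb{E}_P[\min(\tau_{\natswith},n)\,\vert\,X_0=x]$ for every $n$.

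Finally I would invoke monotone convergence: $\min(\tau_{\natswith},n)\uparrow\tau_{\natswith}$ pointwise as $n\to\infty$, so the right-hand side increases to $h(x)$, yielding $g(x)\geq h(x)$ on $A^c$; combined with equality on $A$, this gives $g\geq h$ and establishes minimality. The only point that needs a little care is bookkeeping of the iteration, namely identifying the coefficient of the ``$1$'' at depth $k$ with $P(X_1,\ldots,X_{k-1}\in A^c\,\vert\,X_0=x)=P(\tau_{\natswith}\geq k\,\vert\,X_0=x)$ and verifying non-negativity of $R_n$; I do not expect any genuine obstacle, since $h$ may take the value $+\infty$ but the monotone convergence step handles the extended-real case directly.
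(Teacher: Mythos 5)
Your argument is correct, and it is essentially the standard proof of this result: the paper itself gives no proof but cites \citet[Thm 1.3.5]{norris1998markov}, whose argument is exactly your two steps---first-step analysis via the Markov property and homogeneity to verify the system, then iteration of any non-negative solution to identify the partial sums $\sum_{k=1}^{n}P(\tau_{\natswith}\geq k\,\vert\,X_0=x)=\mathbb{E}_P[\min(\tau_{\natswith},n)\,\vert\,X_0=x]$ and conclude minimality by monotone convergence. Nothing further is needed.
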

In the continuous-time case, the definition is analogous; we introduce a function $\tau_{\realsnonneg}:\Omega_{\realsnonneg}\to\realsnonneg\cup\{+\infty\}$ as
\begin{equation*}
	\tau_{\realsnonneg}(\omega) \coloneqq \inf\bigl\{t\in\realsnonneg\,:\,\omega(t)\in A\bigr\}\,\,\text{for all $\omega\in\Omega_{\realsnonneg}$.}
\end{equation*}
This function measures the time until a process ``hits'' any state in $A$ on a given sample path. The expected hitting time for a continuous-time process $P$ starting in $x\in\states$ is
\begin{equation*}
	\mathbb{E}_P\bigl[\tau_{\realsnonneg}\,\vert\,X_0=x\bigr] \coloneqq \int_{\Omega_{\realsnonneg}} \tau_{\realsnonneg}(\omega)\,\mathrm{d}P(\omega\,\vert\,X_0=x)\,.
\end{equation*}
We again use $\smash{\mathbb{E}_P\bigl[\tau_{\realsnonneg}\,\vert\,X_0\bigr]}$ to denote the extended-real valued function on $\states$ given by $x\mapsto \smash{\mathbb{E}_P\bigl[\tau_{\realsnonneg}\,\vert\,X_0=x\bigr]}$. Also in this case, the characterization for homogeneous Markov chains is particularly simple:
\begin{proposition}{\citep[Thm 3.3.3]{norris1998markov}}\label{prop:precise_cont_system}
	Let $P$ be a continuous-time homogeneous Markov chain with rate matrix $\smash{\rateforp}$ such that $\smash{\rateforp}(x,x)\neq 0$ for all $x\in A^c$. Then $h\coloneqq \smash{\mathbb{E}_P\bigl[\tau_{\realsnonneg}\,\vert\,X_0\bigr]}$ is the minimal non-negative solution to
	\begin{equation}\label{eq:prop:precise_cont_system}
		\ind{A}h = \ind{A^c} + \ind{A^c}\smash{\rateforp} h\,.
	\end{equation}
\end{proposition}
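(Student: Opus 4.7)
My plan is to split the argument into an \emph{existence} step (verifying that $h$ satisfies the system) and a \emph{minimality} step, conditioning throughout on the first jump time $J_1$ of the process. The key fact I would use is that a continuous-time homogeneous Markov chain with rate matrix $\rateforp$ has exponentially distributed holding times with rate $\lambda_x \coloneqq -\rateforp(x,x)$ in each state $x$, and, independently of this holding time, the first jump from $x$ lands in $y\neq x$ with probability $\rateforp(x,y)/\lambda_x$. The hypothesis $\rateforp(x,x)\neq 0$ for $x\in A^c$ is precisely what ensures $\lambda_x>0$ exactly on the states where this first-jump description must be applied.

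For the existence step, if $x\in A$ then $\tau_{\realsnonneg}=0$ almost surely under $P(\cdot\mid X_0=x)$ (as $\omega(0)=x\in A$ for almost every sample path), so $h(x)=0$, which matches~\eqref{eq:prop:precise_cont_system} at $x$. For $x\in A^c$, the strong Markov property applied at $J_1$ would give $h(x) = 1/\lambda_x + \sum_{y\neq x}(\rateforp(x,y)/\lambda_x)\,h(y)$. Multiplying through by $\lambda_x$ and using $\lambda_x = -\rateforp(x,x)$ then rearranges to $(\rateforp h)(x) = -1$, which is exactly what~\eqref{eq:prop:precise_cont_system} demands on $A^c$.

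For minimality, I would take any non-negative solution $\tilde h$ and show by induction on $n\in\natswith$ that $\tilde h(x)\geq \mathbb{E}_P[\tau_{\realsnonneg}\wedge J_n\mid X_0=x]$ for every $x\in\states$, where $J_n$ denotes the $n$-th jump time of the chain. The base case $n=0$ is immediate from $J_0=0$ and $\tilde h\geq 0$. For the inductive step I would apply the same first-jump decomposition to $\tilde h$ (using that $\tilde h$ satisfies the equation) and substitute the inductive hypothesis into each $\tilde h(y)$, using the strong Markov property at $J_1$ to identify the resulting expression with $\mathbb{E}_P[\tau_{\realsnonneg}\wedge J_{n+1}\mid X_0=x]$. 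To pass to the limit I would invoke finiteness of $\states$: it makes $\{\lambda_x\}$ uniformly bounded, so the chain is non-explosive and $J_n\to+\infty$ almost surely, and monotone convergence then upgrades the bound to $\tilde h(x)\geq \mathbb{E}_P[\tau_{\realsnonneg}\mid X_0=x] = h(x)$.

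The hard part will be the measure-theoretic bookkeeping in the inductive step --- in particular, a careful application of the strong Markov property at the random time $J_1$, and, more delicately, handling the possibility that $h$ or $\tilde h$ equals $+\infty$ at some states, since the matrix product $\rateforp h$ then involves subtraction of infinities. A clean way around this is to interpret~\eqref{eq:prop:precise_cont_system} in the extended reals under the conventions fixed in the excerpt, and to establish the minimality inequality separately on $\{x : \tilde h(x) < +\infty\}$, where all manipulations are finite, and on its complement, where $\tilde h(x)\geq h(x)$ is trivially satisfied.
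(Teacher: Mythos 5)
The paper does not prove this proposition at all---it is imported verbatim as Theorem 3.3.3 of \citet{norris1998markov}---and your argument is essentially a faithful reconstruction of the proof given there: first-jump decomposition with exponential holding times for the identity, and induction over the jump times $J_n$ followed by monotone convergence for minimality. The outline is sound; the only point deserving care beyond what you already flag is that the paper's stated arithmetic conventions do not define $c(+\infty)$ for $c<0$, so the term $\rateforp(x,x)h(x)$ in~\eqref{eq:prop:precise_cont_system} is genuinely problematic when $h(x)=+\infty$, but your proposal to treat the infinite set separately (and the paper's own later assumptions, which force finiteness) dispose of this.
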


\section{Imprecise-Markov Chains}\label{sec:imp_markov}

Let us now introduce \emph{imprecise-Markov chains}~\citep{itip:stochasticprocesses,skulj2015efficient,krak2017imprecise}, which are the stochastic processes that we aim to study in this work. Their characterization is based on the theory of \emph{imprecise probabilities}~\citep{Walley:1991vk,augustin2013:itip}. 

We here adopt the ``sensitivity analysis'' interpretation of imprecise probabilities. This means that we represent an imprecise-Markov chain simply as a \emph{set} $\mathcal{P}$ of stochastic processes. Intuitively, the idea is that we collect in $\mathcal{P}$ all (traditional, ``precise'') stochastic processes that we deem to plausibly capture the dynamics of the underlying system of interest. Inferences with respect to $\mathcal{P}$ are defined using \emph{lower-} and \emph{upper} expectations, given respectively as
\begin{equation*}
	\underline{\mathbb{E}}_\mathcal{P}[\cdot\,\vert\,\cdot]\coloneqq \inf_{P\in\mathcal{P}}\mathbb{E}_P[\cdot\,\vert\,\cdot]
	\quad\text{and}\quad
	\overline{\mathbb{E}}_\mathcal{P}[\cdot\,\vert\,\cdot]\coloneqq \sup_{P\in\mathcal{P}}\mathbb{E}_P[\cdot\,\vert\,\cdot]\,.
\end{equation*}
So, their inferences represent \emph{robust}---i.e. conservative---and \emph{tight} lower- and upper bounds on inferences with respect to \emph{all} stochastic processes that we deem to be plausible.

\subsection{Sets of Processes \& Types}\label{subsec:imc_sets_types}

We already mentioned that an imprecise-Markov chain is essentially simply a set $\mathcal{P}$ of stochastic processes. Let us now consider how to define such sets.

We start by considering the discrete-time case; then, clearly, $\mathcal{P}$ will be a set of discrete-time processes. We will parameterize such a set with some non-empty set $\mathcal{T}$ of transition matrices. Our aim is then to include in $\mathcal{P}$ all processes that are in some sense ``compatible'' with $\mathcal{T}$.\footnote{We will not constrain the initial models $P(X_0)$ of the elements of $\mathcal{P}$, since in any case such a choice would not influence the inferences that we study in this work.} However, at this point we are faced with a choice about which \emph{type} of processes to include in this set, and these different choices lead to \emph{different types of imprecise-Markov chains}.

Arguably the conceptually most simple model is $\mathcal{P}^{\mathrm{HM}}_\mathcal{T}$, which contains all homogeneous Markov chains $P$ whose corresponding transition matrix is included in $\mathcal{T}$:
\begin{equation*}
	\mathcal{P}^{\mathrm{HM}}_\mathcal{T}\coloneqq \bigl\{ P\in\mathbb{P}^{\mathrm{HM}}_{\natswith}\,:\, {}^{P}T\in\mathcal{T} \bigr\}\,.
\end{equation*} 
However, we could instead consider $\mathcal{P}^\mathrm{M}_{\mathcal{T}}$, which is the set of all (not-necessarily homogeneous) Markov chains whose time-dependent transition matrices are contained in $\mathcal{T}$:
\begin{equation*}
	\mathcal{P}^{\mathrm{M}}_\mathcal{T}\coloneqq \bigl\{ P\in\mathbb{P}^{\mathrm{M}}_{\natswith}\,:\, {}^{P}T_n\in\mathcal{T}\,\text{for all $n\in\natswith$} \bigr\}\,.
\end{equation*} 
The last choice that we consider here is the set $\mathcal{P}^{\mathrm{I}}_{\mathcal{T}}$, which essentially contains \emph{all} discrete-time processes whose single-step transition dynamics are described by $\mathcal{T}$. Its characterization is more cumbersome since we have not expressed these general processes in terms of transition matrices, but we can say that it is the set of all $P\in\mathbb{P}_{\natswith}$ such that for all $n\in\natswith$ and all $x_0,\ldots,x_{n}\in\states$, there is some $T\in\mathcal{T}$ such that for all $y\in\states$ it holds that
\begin{equation*}
	P(X_{n+1}=y\,\vert\,X_0=x_0,\ldots,X_n=x_n) = T(x_n,y)\,.
\end{equation*}
This last type is called an imprecise-Markov chain under \emph{epistemic irrelevance}, whence the superscript `$\mathrm{I}$'.

Note that the three types $\mathcal{P}^\mathrm{HM}_{\mathcal{T}}, \mathcal{P}^\mathrm{M}_{\mathcal{T}}$, and $\mathcal{P}^\mathrm{I}_{\mathcal{T}}$ capture not only ``plausible'' variation in terms of parameter uncertainty---expressed through the set $\mathcal{T}$---but also variation in terms of the structural independence conditions that we consider! So, from an applied perspective, if someone is not sure whether the underlying system that they are studying is truly Markovian and/or time-homogeneous, they might choose to use different such sets in their analysis.

In the continuous-time case, we again proceed analogously. First, we fix a non-empty set $\rateset$ of rate matrices, which will be the parameter for our models. We then first consider the set $\mathcal{P}^{\mathrm{HM}}_\rateset$ of all homogeneous Markov chains whose rate matrix is included in $\rateset$:
\begin{equation*}
	\mathcal{P}^{\mathrm{HM}}_\rateset \coloneqq \bigl\{ P\in\mathbb{P}_{\realsnonneg}^{\mathrm{HM}}\,:\, \rateforp\in\rateset\bigr\}\,.
\end{equation*}
The other two types are constructed in analogy to the discrete-time case, but unfortunately we don't have the space for a complete exposition of their characterization. Instead we refer the interested reader to~\citep{krak2017imprecise,krak2021phd} for an in-depth study of these different types and comparisons between them; in what follows we limit ourselves to a largely intuitive specification. 

The model $\mathcal{P}^\mathrm{M}_\rateset$ is the set of all continuous-time (not-necessarily homogeneous) Markov chains whose transition dynamics are compatible with $\rateset$ at every point in time. This includes in particular all Markov chains $P$ satisfying the appropriate differentiability assumptions to meaningfully say that the time-dependent rate matrices $\rateforp_t$---as in Equation~\eqref{eq:time_dependent_rate}---are included in $\rateset$ for all $t\in\realsnonneg$. However, $\mathcal{P}^\mathrm{M}_\rateset$ also contains other processes that are not (everywhere) differentiable; see e.g.~\citep[Sec 4.6 and 5.2]{krak2021phd} for the technical details.

The most involved model to explain is again $\mathcal{P}^\mathrm{I}_\rateset$, which includes \emph{all} continuous-time processes whose time- and history-dependent transition dynamics can be described using elements of $\rateset$. It includes, but is not limited to, appropriately differentiable processes $P$ such that for all $n\in\natswith$, all $t_0<\cdots<t_n\in\realsnonneg$, and all $x_{t_0},\ldots,x_{t_{n}}\in\states$, there is some $Q\in\rateset$ such that for all $y\in\states$ it holds that
\begin{align*}
	\biggl(\frac{\mathrm{d}}{\mathrm{d}\,s}P(X_s=y\,\vert\,X_{t_0}=x_{t_0},\ldots,&X_{t_n}=x_{t_n})\biggr)\bigg\vert_{s=t_n} \\
	&= Q(x_{t_n},y)
\end{align*}
We again refer to~\citep[Sec 4.6 and 5.2]{krak2021phd} for the technical details involving the additional elements of $\mathcal{P}^\mathrm{I}_\rateset$ that are not appropriately differentiable. Importantly, we note the nested structure~~\citep[Prop 5.9]{krak2021phd}
\begin{equation*}
	\mathcal{P}^\mathrm{HM}_\rateset \subseteq \mathcal{P}^\mathrm{M}_\rateset \subseteq \mathcal{P}^\mathrm{I}_\rateset\,,
\end{equation*}
where the inclusions are strict provided $\rateset$ isn't trivial.

For notational convenience, we will use identical sub- and superscripts to denote the corresponding lower- and upper expectations for any of these imprecise-Markov chains; e.g., we let $\underline{\mathbb{E}}^\mathrm{HM}_\mathcal{T}[\cdot\,\vert\,\cdot] \coloneqq \underline{\mathbb{E}}_{\mathcal{P}^\mathrm{HM}_\mathcal{T}}[\cdot\,\vert\,\cdot]$.

\subsection{Imprecise Transition Dynamics}\label{subsec:imprecise_dynamics}

Let us now introduce some machinery to describe the dynamics of imprecise-Markov chains. In particular, we here move from the set-valued parameters $\mathcal{T}$ and $\rateset$ used in Section~\ref{subsec:imc_sets_types}, to their dual representations; these are operators that can serve as computational tools. 

In Section~\ref{subsec:imc_sets_types}, we described discrete-time imprecise-Markov chains using non-empty sets $\mathcal{T}$ of transition matrices. With any such set, we can associate the corresponding \emph{lower-} and \emph{upper transition operators} $\underline{T}$ and $\overline{T}$ on $\gamblesX$, defined respectively as
\begin{equation*}
	\underline{T}f\coloneqq \inf_{T\in\mathcal{T}}Tf
	\quad\text{and}\quad
	\overline{T}f\coloneqq \sup_{T\in\mathcal{T}}Tf
	\quad\text{for all $f\in\gamblesX$.}
\end{equation*}
More generally, any operator $\underline{T}$ (resp. $\overline{T}$) on $\gamblesX$ is a \emph{lower} (resp. \emph{upper}) \emph{transition operator} if for all $f,g\in\gamblesX$, all $\lambda\in\realsnonneg$, and all $x\in\states$, it holds that~\citep{de2017limit}
\begin{enumerate}
	\item $\min_{y\in\states}f(y)\leq\underline{T}f(x)$ and $\overline{T}f(x)\leq \max_{y\in\states}f(y)$
	\item $\underline{T}f + \underline{T}g \leq \underline{T}(f+g)$ and $\overline{T}(f+g)\leq \overline{T}f+\overline{T}g$
	\item $\underline{T}(\lambda f)=\lambda\underline{T}f$ and $\overline{T}(\lambda f)=\lambda\overline{T}f$.
\end{enumerate} 
It should be noted that lower- and upper transition operators are conjugate, in that any $\underline{T}$ induces a corresponding upper transition operator $\overline{T}(\cdot)=-\underline{T}(-\cdot)$, and \emph{vice versa}. Moreover, any transition matrix $T$ is also a lower---and, by its linearity, upper---transition operator.

It is easily verified that the lower- and upper transition operators corresponding to a given non-empty set $\mathcal{T}$ are, indeed, lower- and upper transition operators. Conversely, with a given lower transition operator $\underline{T}$, we can associate the set of transition matrices that \emph{dominate} it, in the sense that
\begin{equation*}
	\mathcal{T}_{\underline{T}}\coloneqq \bigl\{ T\,:\, \text{$T$ a trans. mat.},\,Tf\geq \underline{T}f\,\text{for all $f\in\gamblesX$}\bigr\}\,.
\end{equation*}
This set satisfies the following important properties:
\begin{proposition}{\citep[Sec 3.4]{krak2021phd}}\label{prop:duality_trans}
	Let $\underline{T}$ be a lower transition operator with conjugate upper transition operator $\overline{T}(\cdot)=-\underline{T}(-\cdot)$ and dominating set of transition matrices $\mathcal{T}_{\underline{T}}$. Then $\mathcal{T}_{\underline{T}}$ is a non-empty, closed, and convex set of transition matrices that has separately specified rows,\footnote{A set $\mathcal{M}$ of matrices is said to have \emph{separately specified rows} if, intuitively, it is closed under the row-wise recombination of its elements; see e.g.~\citep{itip:stochasticprocesses} for details.} and for all $f\in\gamblesX$ it holds that $\underline{T}f = \inf_{T\in\mathcal{T}_{\underline{T}}} Tf$ and $\overline{T}f = \sup_{T\in\mathcal{T}_{\underline{T}}} Tf$.
	Moreover, for all $f\in\gamblesX$ there is some $T\in\mathcal{T}_{\underline{T}}$ such that $Tf=\underline{T}f$, and there is some---possibly different---$T\in\mathcal{T}_{\underline{T}}$ such that $Tf=\overline{T}f$.	
\end{proposition}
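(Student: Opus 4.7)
The plan is to reduce the proposition row-by-row to a standard duality for coherent lower previsions. The defining inequality $Tf \geq \underline{T}f$ in $\mathcal{T}_{\underline{T}}$ is a pointwise inequality in $x \in \states$, so the condition on the $x$-th row of $T$ involves only $Tf(x)$ and is independent of the other rows. This is precisely what ``separately specified rows'' requires, so I would first rewrite $\mathcal{T}_{\underline{T}}$ as the row-product of row-sets $\mathcal{M}_x$, where $\mathcal{M}_x$ consists of all probability mass functions $p$ on $\states$ (i.e.\ rows with $p(y)\geq 0$ and $\sum_{y\in\states} p(y) = 1$) such that $\sum_{y\in\states} p(y) f(y) \geq \underline{T}f(x)$ for every $f\in\gamblesX$.

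Next I would analyze the real-valued functional $\phi_x(f) \coloneqq \underline{T}f(x)$ on $\gamblesX$. The three defining axioms of a lower transition operator translate directly into: (i) $\min_y f(y) \leq \phi_x(f) \leq \max_y f(y)$ (using the conjugate upper operator for the upper inequality), (ii) $\phi_x(f+g) \geq \phi_x(f) + \phi_x(g)$, and (iii) $\phi_x(\lambda f) = \lambda \phi_x(f)$ for $\lambda\in\realsnonneg$. These are exactly the axioms of a coherent lower prevision on $\gamblesX$, so the Lower Envelope Theorem from the theory of imprecise probabilities yields that $\mathcal{M}_x$ is non-empty and that $\phi_x(f) = \min_{p\in\mathcal{M}_x}\sum_y p(y)f(y)$, with the minimum attained. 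Non-emptiness of $\mathcal{T}_{\underline{T}}$, the lower envelope equality, and the attainment claim then follow by assembling row-by-row: for a fixed $f$, pick for each $x$ a minimizer $p_x^\star \in \mathcal{M}_x$ and form the matrix whose $x$-th row is $p_x^\star$; by separate specification this matrix lies in $\mathcal{T}_{\underline{T}}$ and realizes $Tf = \underline{T}f$.

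For the remaining structural properties, each $\mathcal{M}_x$ is convex because it is defined by affine inequalities in $p$, and closed because it is the intersection of a family of closed half-spaces with the (closed) probability simplex. Hence the row-product $\mathcal{T}_{\underline{T}}$ is closed and convex as well. The upper envelope equality and its attainment follow by conjugacy: since $\overline{T}(\cdot) = -\underline{T}(-\cdot)$, applying the lower result to $-f$ and negating turns each $\min$ into a $\max$ over the same $\mathcal{M}_x$, and the matrix assembled from the corresponding maximizers realizes $\overline{T}f$.

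The main obstacle is the application of the Lower Envelope Theorem, i.e.\ establishing both nonemptiness of $\mathcal{M}_x$ and the envelope representation of $\phi_x$. This is a classical Hahn--Banach / separating-hyperplane argument, which in the present finite-dimensional setting becomes transparent: the probability simplex is compact, the map $p\mapsto \sum_y p(y)f(y)$ is continuous, and superadditivity plus non-negative homogeneity of $\phi_x$ guarantee that no separating hyperplane can strictly undercut $\phi_x$. Given the well-developed infrastructure in the cited references, this step is essentially pre-packaged, and the rest of the proof is largely bookkeeping.
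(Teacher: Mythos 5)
Your proposal is correct: the paper does not prove this proposition but cites it from the referenced thesis, and your argument---decomposing $\mathcal{T}_{\underline{T}}$ row-wise into credal sets $\mathcal{M}_x$, recognizing each $\phi_x(f)=\underline{T}f(x)$ as a coherent lower prevision, and invoking the Lower Envelope Theorem (with attainment by compactness of the simplex and recombination of row-wise minimizers via separately specified rows)---is precisely the standard route taken in that reference. No gaps; the only soft spot is the brief gloss on the envelope theorem itself, but deferring that classical Hahn--Banach step to the literature is appropriate here.
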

Notably, there is a one-to-one relation between non-empty sets of transition matrices that are closed and convex and have separately specified rows, and lower (or upper) transition operators: if $\underline{T}$ is the lower transition operator for the set $\mathcal{T}$, and if $\mathcal{T}$ satisfies these properties, then $\mathcal{T}=\mathcal{T}_{\underline{T}}$~\citep[Cor 3.38]{krak2021phd}. Hence these objects may serve as dual representations for each other. 

One reason that this is important is the use of $\underline{T}$ as a computational tool; under the conditions of this duality it holds that for any function $f\in\gamblesX$ and any $n\in\natswith$, we can write~\citep{itip:stochasticprocesses}
\begin{equation*}
	\underline{\mathbb{E}}^\mathrm{I}_{\mathcal{T}}[f(X_n)\vert X_0=x] = \underline{\mathbb{E}}^\mathrm{M}_{\mathcal{T}}[f(X_n)\vert X_0=x] = \underline{T}^nf(x)\,,
\end{equation*}
where $\underline{T}$ is the lower transition operator for $\mathcal{T}$. This reduces the problem of computing such inferences for the imprecise-Markov chains $\mathcal{P}^\mathrm{M}_\mathcal{T}$ and $\mathcal{P}^\mathrm{I}_\mathcal{T}$ to solving $n$ independent \emph{linear} optimization problems over $\mathcal{T}$; first compute $f_1\coloneqq\underline{T}f$, then compute $f_2\coloneqq\underline{T}\,f_1=\underline{T}^2f$, and so forth. Note that this method in general only yields a conservative bound on the corresponding inference for $\mathcal{P}^\mathrm{HM}_\mathcal{T}$, as the minimizers $T_k$ that obtain $T_kf_{k-1}=\underline{T} f_{k-1}$ may be different at each step.

We next consider the dynamics in the continuous-time setting. We proceed analogously to the above: we first consider a non-empty and bounded\footnote{In the induced operator norm.} set $\rateset$ of rate matrices. With this set, we then associate the corresponding \emph{lower-} and \emph{upper rate operators} $\lrate$ and $\urate$ on $\gamblesX$, defined as
\begin{equation*}
	\lrate f\coloneqq \inf_{Q\in\rateset}Qf
	\quad\text{and}\quad
	\urate f\coloneqq \sup_{Q\in\rateset}Qf
	\quad\text{for all $f\in\gamblesX$.}
\end{equation*}
More generally, any operator $\lrate$ (resp. $\urate$) on $\gamblesX$ is a \emph{lower} (resp. \emph{upper}) \emph{rate operator} if for all $f,g\in\gamblesX$, all $\lambda\in\realsnonneg$ and $\mu\in\reals$, and all $x,y\in\states$ with $y\neq x$, it holds that~\citep{de2017limit}
\begin{enumerate}
	\item $\lrate(\mu\ones)(x)=0$ and $\urate(\mu\ones)(x)=0$
	\item $\lrate\ind{y}(x)\geq 0$ and $\urate\ind{y}(x)\geq 0$
	\item $\lrate f + \lrate g \leq \lrate(f+g)$ and $\urate(f+g)\leq \urate f+\urate g$
	\item $\lrate(\lambda f)=\lambda\lrate f$ and $\urate(\lambda f)=\lambda\urate f$
\end{enumerate}
As before, such objects are conjugate, in that if $\lrate$ is a lower rate operator, then $\smash{\urate}(\cdot)=-\smash{\lrate}(-\cdot)$ is an upper rate operator. Moreover, any rate matrix $Q$ is also a lower (and upper) rate operator.
There is again a duality between lower (or upper) rate operators, and sets of rate matrices. For fixed $\lrate$ and with the dominating set of rate matrices $\rateset_{\lrate}$ defined as
\begin{equation*}
	\rateset_{\lrate}\coloneqq \bigl\{ Q\,:\, \text{$Q$ a rate mat.},\,Qf\geq \lrate f\,\text{for all $f\in\gamblesX$}\bigr\}\,,
\end{equation*}
we have the following result:
\begin{proposition}{\citep[Sec 6.2]{krak2021phd}}\label{prop:duality_rate}
	Let $\underline{Q}$ be a lower rate operator with conjugate upper rate operator $\overline{Q}(\cdot)=-\underline{Q}(-\cdot)$ and dominating set of rate matrices $\mathcal{Q}_{\underline{Q}}$. Then $\mathcal{Q}_{\underline{Q}}$ is a non-empty, compact, and convex set of rate matrices that has separately specified rows, and for all $f\in\gamblesX$ it holds that $\underline{Q}f = \inf_{Q\in\mathcal{Q}_{\underline{Q}}} Qf$ and $\overline{Q}f = \sup_{Q\in\mathcal{Q}_{\underline{Q}}} Qf$.
	Moreover, for all $f\in\gamblesX$ there is some $Q\in\mathcal{Q}_{\underline{Q}}$ such that $Qf=\underline{Q}f$, and there is some---possibly different---$Q\in\mathcal{Q}_{\underline{Q}}$ such that $Qf=\overline{Q}f$.	
\end{proposition}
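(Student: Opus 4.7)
The proof plan mirrors that of Proposition~\ref{prop:duality_trans}, with rate matrices and rate operators replacing transition matrices and transition operators, and I would organize it around four claims tackled in a specific order.

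First I would observe that both halves of the dominance condition $Qf \geq \lrate f$ are row-local: $(Qf)(x) = \sum_{y\in\states} Q(x,y) f(y)$ depends only on the $x$-th row of $Q$, and $\lrate f(x)$ is a fixed scalar once $f$ and $x$ are chosen. Combined with the row-local nature of the rate-matrix constraints themselves ($Q(x,y)\geq 0$ for $y\neq x$ and $\sum_y Q(x,y)=0$), this immediately yields the separately-specified-rows property as soon as each per-row admissible set is shown to be non-empty. Convexity of $\rateset_{\lrate}$ is equally immediate, since every defining inequality is affine in $Q$ and the ambient set of rate matrices is convex.

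The crux is non-emptiness together with the envelope equality and its attainment. For this, I would fix $x\in\states$ and study the functional $L_x\colon f\mapsto\lrate f(x)$ on the finite-dimensional space $\gamblesX$. Properties~3 and~4 of a lower rate operator make $L_x$ super-additive and positively homogeneous, so $-L_x$ is sublinear. Applying the Hahn--Banach dominated-extension theorem, for any chosen $f^\star\in\gamblesX$ I obtain a linear functional $\ell_x$ on $\gamblesX$ satisfying $\ell_x \geq L_x$ everywhere and $\ell_x(f^\star) = L_x(f^\star)$. Writing $\ell_x(f) = \sum_{y\in\states} q(y) f(y)$, I would then verify that the coefficient vector $(q(y))_{y\in\states}$ is a valid row of a rate matrix at $x$: the off-diagonal non-negativity $q(y) = \ell_x(\ind{y}) \geq L_x(\ind{y}) \geq 0$ for $y\neq x$ comes from property~2, while the zero-sum $\sum_y q(y) = \ell_x(\ones) = 0$ follows by sandwiching $\ell_x(\ones) \geq L_x(\ones) = 0$ against $-\ell_x(\ones) = \ell_x(-\ones) \geq L_x(-\ones) = 0$, both vanishing by property~1. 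Assembling such a row independently at each $x$ delivers a rate matrix $Q\in\rateset_{\lrate}$, proving non-emptiness; taking $f^\star$ to be any target function yields a $Q$ with $Qf^\star = \lrate f^\star$, giving both the envelope equality $\lrate f = \inf_{Q\in\rateset_{\lrate}} Qf$ and its attainment. The corresponding statements for $\urate$ then follow by conjugacy.

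Compactness is routine in finite dimensions. Closedness is immediate, since $\rateset_{\lrate}$ is the intersection of the closed set of rate matrices with the family of closed half-spaces $\{Q : Qf(x)\geq \lrate f(x)\}$ indexed by $f\in\gamblesX$ and $x\in\states$. For boundedness, applying the dominance condition to $f = \ind{y}$ and to $f = -\ind{y}$ for each $y\neq x$ pins every off-diagonal entry between $\lrate\ind{y}(x)$ and $\urate\ind{y}(x)$, after which the zero-row-sum constraint bounds the diagonal entries as well. The main obstacle is the Hahn--Banach step, and within it the verification that the dominating linear functional corresponds to a genuine rate-matrix row: the sign constraints are near-automatic from property~2, but the zero-row-sum constraint relies crucially on the \emph{two-sided} normalization $\lrate(\mu\ones)(x) = 0$ in property~1, since without the $\mu=-1$ case one could only conclude $\ell_x(\ones)\geq 0$. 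An alternative route would be to reduce to Proposition~\ref{prop:duality_trans} by showing that a suitable rescaling $I + \epsilon\lrate$ is a lower transition operator for sufficiently small $\epsilon>0$, but the direct Hahn--Banach argument is more transparent and avoids having to track the translation.
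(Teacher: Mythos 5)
The paper does not prove Proposition~\ref{prop:duality_rate}; it is imported verbatim from the cited reference. Your argument is nonetheless correct and is essentially the standard one used there: the row-by-row Hahn--Banach extension of the superadditive, positively homogeneous functional $f\mapsto\lrate f(x)$, with attainment at a chosen $f^\star$, together with the routine verifications of convexity, separately specified rows, and compactness. Your observation that the zero-row-sum property of the extracted row hinges on the two-sided normalization $\lrate(\mu\ones)(x)=0$ for all $\mu\in\reals$ (not just $\mu\geq 0$) is exactly the right point to flag.
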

Now fix any lower rate operator $\lrate$ and any $t\in\realsnonneg$, and let
\begin{equation}\label{eq:lower_rate_limit}
	e^{\lrate t}\coloneqq \lim_{n\to+\infty}\bigl(I+\nicefrac{t}{n}\lrate\bigr)^n\,.
\end{equation}
The operator $e^{\lrate t}$ is then a lower transition operator~\citep{de2017limit}, and the family $(e^{\lrate t})_{t\in\realsnonneg}$ is a semigroup of lower transition operators; it satisfies $e^{\lrate(t+s)}=e^{\lrate t}e^{\lrate s}$ for all $t,s\in\realsnonneg$, and $e^{\lrate 0}=I$. The analogous construction with an upper rate operator $\urate$ instead generates a semigroup $(e^{\urate t})_{t\in\realsnonneg}$ of upper transition operators.
When $\lrate$ and $\urate$ are taken with respect to the same set $\rateset$, these semigroups satisfy, for all $t\in\realsnonneg$, $f\in\gamblesX$, and $Q\in\rateset$,
\begin{equation}\label{eq:semigroup_domination}
	e^{\lrate t}f \leq e^{Qt}f \leq e^{\urate t}f\,.
\end{equation}
Here the importance again derives from the use as a computational tool; under the conditions of duality between $\rateset$ and $\lrate$, we have for any $f\in\gamblesX$ and any $t\in\realsnonneg$ that~\citep{skulj2015efficient,krak2017imprecise}
\begin{equation*}
	\underline{\mathbb{E}}^\mathrm{I}_{\mathcal{Q}}[f(X_t)\vert X_0=x] = \underline{\mathbb{E}}^\mathrm{M}_{\mathcal{Q}}[f(X_t)\vert X_0=x] = e^{\lrate t}f(x)\,.
\end{equation*}
Hence such inferences can be numerically computed by approximating~\eqref{eq:lower_rate_limit} with a finite choice of $n$, and then solving $n$ independent linear optimization problems over $\rateset$. Error bounds for this scheme are available in the literature~\citep{skulj2015efficient,krak2017imprecise,erreygers2021phd}.

\subsection{Class Structure}

Let us now fix a set $\rateset$ of rate matrices that we will use in the remainder of this work. Throughout, let $\lrate$ and $\urate$ denote the lower- and upper rate operators associated with $\rateset$. We impose several standard regularity conditions on this set: we assume that $\rateset$ is non-empty, compact, convex, and that it has separately specified rows. These are common assumptions that are imposed to ensure the duality between $\rateset$ and $\lrate$, which in turn guarantees that inferences with the induced imprecise-Markov chains remain well-behaved, as well as analytically (and, often, computationally) tractable.

We now have all the pieces to start studying the inference problem that is the subject of this work: the \emph{lower-} and \emph{upper expected hitting times} of the set $A\subset\states$ for \emph{continuous-time imprecise-Markov chains described by} $\rateset$.

Before we begin, let us impose two additional conditions on the dynamics of the system. 
\begin{assumption}\label{ass:absorbing}
	We assume that all states in $A$ are \emph{absorbing}, which is equivalent to requiring that $Q(x,x)=0$ for all $Q\in\rateset$ and all $x\in A$.
\end{assumption}
Note that this does not influence the inferences in which we are interested, since those only deal with behavior at times \emph{before} states in $A$ are reached. However, imposing this explicitly substantially simplifies the analysis.

Next, we assume that the set $A$ is \emph{lower reachable} from any state $x\in A^c$~\citep{de2017limit}. This means that we can construct a sequence $x_1,\ldots,x_{n+1}\in\states$ starting in any $x_1\in A^c$ and ending in some $x_{n+1}\in A$ such that, for all $k=1,\ldots,n$, it holds that $\lrate\ind{x_{k+1}}(x_k)>0$. This is equivalent~\citep{de2017limit} to
\begin{assumption}\label{ass:reachable}
	We assume $e^{\lrate t}\ind{A}(x)>0$ for all $t\in\realspos$ and all $x\in A^c$.
\end{assumption} 
Essentially, this means that for all elements of our imprecise-probabilistic models the probability of eventually hitting $A$ is bounded away from zero.
This ensures that the expected hitting times remain bounded for all $P\in\mathcal{P}^{\mathrm{I}}_{\rateset}$, so that we can ignore any extended real-valued analysis.  It also implies that for all $Q\in\rateset$ we have that $Q(x,x)\neq 0$ for all $x\in A^c$, which is relevant to meet the precondition of Proposition~\ref{prop:precise_cont_system}.
As a practical point, \citet{de2017limit} gives an algorithm to check whether a given set $\rateset$ satisfies this condition. 

On a technical level, Assumption~\ref{ass:reachable} is the crucial one for our results, and---unlike with Assumption~\ref{ass:absorbing}---it cannot really be ignored in practice. However, based on earlier work by~\citet{krak2019hitting} in the discrete-time setting, we hope in the future to strengthen our results to hold without this assumption.

\section{Subspace Dynamics}\label{subsec:subspace_dynamics}

In the context of hitting times, the interesting behavior of a process actually occurs \emph{before} it has reached a target state in~$A$. Hence it will be useful to introduce some machinery to study the transition dynamics as it relates to the states $A^c$.

To introduce the notation in a general way, choose any non-empty $\mathcal{Y}\subset\states$. Then for any $f\in\gamblesX$, let $f\vert_{\mathcal{Y}}\in\gamblesY{\mathcal{Y}}$ denote the restriction of $f$ to $\mathcal{Y}$. Conversely, for any $f\in\gamblesY{\mathcal{Y}}$, let $f\upX\in\gamblesX$ denote the unique extension of $f$ to $\states$ that satisfies $f(x)=0$ for all $x\in\states\setminus\mathcal{Y}$. Moreover, for any operator $M$ on $\gamblesX$, we define the operator $M\vert_{\mathcal{Y}}$ on $\gamblesY{\mathcal{Y}}$ as
\begin{equation*}
	M\vert_{\mathcal{Y}}f \coloneqq \bigl(M(f\upX)\bigr)\vert_{\mathcal{Y}}\quad\quad\text{for all $f\in\gamblesY{\mathcal{Y}}$.}
\end{equation*}
This somewhat verbose notation is perhaps most easily understood when $M$ is a linear operator, i.e. a matrix. In that case, $M\vert_{\mathcal{Y}}$ is simply the $\abs{\mathcal{Y}}\times\abs{\mathcal{Y}}$ sub-matrix of $M$ on the coordinates in $\mathcal{Y}$. The definition above allows us to extend this notion also to non-linear operators, and to lower- and upper transition and rate operators, specifically.

Now for any rate matrix $Q\in\rateset$, we call $G\coloneqq Q\resAc$ its corresponding \emph{subgenerator}. For any $t\in\realsnonneg$, we then define $e^{Gt}\coloneqq e^{Qt}\resAc$. We have the following result:
\begin{proposition}\label{prop:subsemigroup_precise}
	Fix $Q\in\rateset$ and let $G$ be its subgenerator. Then $e^{Gt} = \lim_{n\to+\infty} \bigl(I+\nicefrac{t}{n}G\bigr)^n$ for all $t\in\realsnonneg$.
	Moreover, the family $(e^{Gt})_{t\in\realsnonneg}$ is a semigroup.
\end{proposition}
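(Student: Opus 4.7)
The plan is to reduce both claims to a single structural observation about $Q$ and the semigroup $(e^{Qt})$: because Assumption~\ref{ass:absorbing} gives $Q(x,x)=0$ for all $x\in A$, and $Q$ is a rate matrix (off-diagonal entries non-negative, rows summing to zero), the $A$-indexed row of $Q$ must vanish entirely, so that $Q(x,y)=0$ for all $x\in A$ and $y\in\states$. From this it follows by a short induction that for each $\epsilon\in\realsnonneg$ and $k\in\natswith$, the matrix $(I+\epsilon Q)^k$ has its $A$-indexed rows equal to those of the identity; passing to the limit $(I+\nicefrac{t}{n}Q)^n\to e^{Qt}$ gives the same for $e^{Qt}$. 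Equivalently, $e^{Qt}$ leaves invariant the subspace of $f\in\gamblesX$ that vanish on $A$.

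For the limit characterization, I would first prove by induction on $k$ that for every $\epsilon\in\realsnonneg$, $k\in\natswith$, and $f\in\gamblesY{A^c}$,
\[
(I+\epsilon G)^k f = \bigl((I+\epsilon Q)^k(f\upX)\bigr)\resAc.
\]
The base case $k=0$ is trivial. For the step, set $g\coloneqq(I+\epsilon G)^k f$ and $h\coloneqq(I+\epsilon Q)^k(f\upX)$; by the inductive hypothesis $g=h\resAc$, and by the invariance observation $h$ vanishes on $A$, so $h=g\upX$. Unwinding the definition $G\coloneqq Q\resAc$ then shows that $(I+\epsilon G)g$ and $\bigl((I+\epsilon Q)h\bigr)\resAc$ both equal $g+\epsilon Gg$ on $A^c$. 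Setting $\epsilon=\nicefrac{t}{n}$, specializing to $k=n$, and letting $n\to+\infty$, the right-hand side converges to $\bigl(e^{Qt}(f\upX)\bigr)\resAc=e^{Qt}\resAc f=e^{Gt}f$ by the standard matrix-exponential limit together with continuity of restriction, so the left-hand side converges to the same thing.

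For the semigroup property, the same invariance argument shows that $e^{Qs}(f\upX)$ vanishes on $A$, and therefore $(e^{Gs}f)\upX = e^{Qs}(f\upX)$. Combining this with $e^{Qt}e^{Qs}=e^{Q(t+s)}$ gives
\[
e^{Gt}(e^{Gs}f) = \bigl(e^{Qt}(e^{Qs}(f\upX))\bigr)\resAc = \bigl(e^{Q(t+s)}(f\upX)\bigr)\resAc = e^{G(t+s)}f,
\]
while $e^{G\cdot 0}=e^{Q\cdot 0}\resAc=I\resAc=I$ on $\gamblesY{A^c}$.

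The main obstacle, I expect, is conceptual rather than computational: one needs to spot the absorbing-state invariance of the zero-on-$A$ subspace under $(I+\epsilon Q)$ and $e^{Qt}$, and this invariance is what lets restriction to $A^c$ commute with taking powers and with the semigroup action. Once this is spelled out, both claims reduce to bookkeeping around the definition $G\coloneqq Q\resAc$.
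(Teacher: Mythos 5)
Your proposal is correct and follows essentially the same route as the paper's proof: the key observation in both is that Assumption~\ref{ass:absorbing} forces the $A$-indexed rows of $Q$ (hence of $I+\epsilon Q$ and of $e^{Qt}$) to be identity rows, so that restriction to $A^c$ commutes with taking powers, with the limit defining the exponential, and with composition. The paper packages this into separate lemmas (restriction distributes over composition when the inner operator acts as the identity on $A$, plus a norm bound for restrictions), whereas you inline the same facts via an explicit induction and continuity of restriction; the difference is purely organizational.
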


Analogously, we define $\lsubgen\coloneqq\lrate\resAc$ and $\usubgen\coloneqq \urate\resAc$ to be the \emph{lower-} and \emph{upper subgenerators} corresponding to $\lrate$ and $\urate$, respectively. We also let $e^{\lsubgen t}\coloneqq e^{\lrate t}\resAc$ and $e^{\usubgen t}\coloneqq e^{\urate t}\resAc$. Perhaps unsurprisingly, we then have:
\begin{proposition}\label{prop:subsemigroup_imprecise}
	It holds that $e^{\lsubgen t} = \lim_{n\to+\infty} \bigl(I+\nicefrac{t}{n}\lsubgen\bigr)^n$ and $e^{\usubgen t} = \lim_{n\to+\infty} \bigl(I+\nicefrac{t}{n}\usubgen\bigr)^n$ for all $t\in\realsnonneg$. Moreover, the families $(e^{\lsubgen t})_{t\in\realsnonneg}$, $(e^{\usubgen t})_{t\in\realsnonneg}$ are semigroups.
\end{proposition}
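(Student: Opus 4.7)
The plan is to transfer the known limit representation of $e^{\lrate t}$ from equation~\eqref{eq:lower_rate_limit} down to $\gamblesAc$, exploiting that Assumption~\ref{ass:absorbing} makes the set $A$ inert under both $\lrate$ and $\urate$. The first step is to observe that for every $x\in A$ and every $f\in\gamblesX$, one has $\lrate f(x)=0$ and $\urate f(x)=0$: by Proposition~\ref{prop:duality_rate} one can write $\lrate f(x)=\inf_{Q\in\rateset_{\lrate}}Qf(x)$, and for each such $Q$ the row $Q(x,\cdot)$ vanishes identically because its off-diagonal entries are non-negative with sum equal to $-Q(x,x)=0$ by Assumption~\ref{ass:absorbing}. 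From this I would immediately deduce that each Euler operator $(I+(t/n)\lrate)$ acts as the identity on $A$, so that any $g\in\gamblesX$ vanishing on $A$ is mapped to a function with the same property.

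The core step will be an induction on $k\in\natswith$ establishing the intertwining identity
\begin{equation*}
    \bigl((I+(t/n)\lrate)^k(f\upX)\bigr)\resAc = (I+(t/n)\lsubgen)^k f
\end{equation*}
for each $f\in\gamblesAc$. Setting $g_k\coloneqq(I+(t/n)\lrate)^k(f\upX)$, the preservation observation gives $g_k = (g_k\resAc)\upX$, so by the very definition $\lsubgen f' = (\lrate(f'\upX))\resAc$ the restriction of $g_{k+1}$ equals $(I+(t/n)\lsubgen)(g_k\resAc)$, closing the induction. Passing to $n\to+\infty$ on both sides---legitimate since $\states$ is finite, so restriction is continuous---the left-hand side tends to $e^{\lrate t}(f\upX)\resAc$ by~\eqref{eq:lower_rate_limit}, which equals $e^{\lsubgen t}f$ by definition. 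The argument for $\usubgen$ is identical.

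For the semigroup property, I would reuse the preservation observation, lifted through the limit, to conclude $e^{\lrate s}(f\upX)=(e^{\lsubgen s}f)\upX$ for every $f\in\gamblesAc$. Substituting this into the known identity $e^{\lrate(t+s)}=e^{\lrate t}e^{\lrate s}$ and restricting to $A^c$ then yields $e^{\lsubgen(t+s)}f = e^{\lsubgen t}e^{\lsubgen s}f$, while $e^{\lsubgen 0}=I$ on $\gamblesAc$ is immediate; and analogously for $\usubgen$.

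The main obstacle I anticipate is precisely the careful verification that the ``vanishes on $A$'' property is preserved by the Euler iterates of $\lrate$ (and therefore by $e^{\lrate t}$): this is where Assumption~\ref{ass:absorbing} is essential, since without it the restriction cannot intertwine the Euler iterates of $\lrate$ with those of $\lsubgen$ and the whole induction breaks. Everything else should reduce to bookkeeping of the restriction and extension operators together with the continuity afforded by the finiteness of $\states$.
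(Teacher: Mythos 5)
Your proposal is correct and follows essentially the same route as the paper: the paper reduces Proposition~\ref{prop:subsemigroup_imprecise} to the argument for Proposition~\ref{prop:subsemigroup_precise}, which likewise uses Assumption~\ref{ass:absorbing} to show that the Euler iterates act as the identity on $A$ (Lemma~\ref{lemma:linear_approx_identity_on_A}), intertwines restriction with composition (Lemma~\ref{lemma:restriction_distributes}), and passes to the limit via the norm bound of Lemma~\ref{lemma:restriction_norm_bound}, with the semigroup property obtained exactly as you describe. Your inline induction and the continuity-of-restriction step are just the paper's lemmas unfolded, so there is nothing substantive to add.
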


Our Assumption~\ref{ass:reachable} implies the norm bound:
\begin{proposition}\label{prop:upper_subsemigroup_contractive}
	For any $t>0$, it holds that $\norm{\smash{e^{\usubgen t}}}<1$.
\end{proposition}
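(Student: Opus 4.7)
The plan is to bound $\norm{e^{\usubgen t}}$ above by $\max_{x \in A^c} e^{\urate t}\ind{A^c}(x)$, and then use Assumption~\ref{ass:reachable} together with finiteness of $A^c$ to show that this maximum is strictly less than $1$. The whole argument rests on two elementary consequences of $e^{\urate t}$ being an upper transition operator: monotonicity (i.e., $g\leq h \Rightarrow e^{\urate t}g \leq e^{\urate t}h$) and the constant-shift identity $e^{\urate t}(c\ones + g) = c + e^{\urate t}g$ for $c\in\reals$. Both follow by combining sub-additivity, non-negative homogeneity, the identity $e^{\urate t}\ones=\ones$, and the bound $e^{\urate t}f(x)\leq \max f$ from the axioms of upper transition operators; sub-additivity only gives one-sided inequalities, which I would close into equalities via the trick $\overline{T}g = \overline{T}((g+c\ones)+(-c\ones))$ and the corresponding one for monotonicity.

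For the norm bound, I would take any $f\in\gamblesAc$ with $\norm{f}\leq 1$. Since $f\upX$ vanishes on $A$ and is bounded by $1$ in absolute value on $A^c$, it satisfies $-\ind{A^c} \leq f\upX \leq \ind{A^c}$. Monotonicity immediately yields $e^{\urate t}(f\upX) \leq e^{\urate t}\ind{A^c}$. For the matching lower bound I would use conjugacy, $e^{\urate t}(-g) = -e^{\lrate t}g$, together with the domination $e^{\lrate t}g \leq e^{\urate t}g$, to get $-e^{\urate t}(f\upX) \leq e^{\urate t}(-f\upX)$, and then monotonicity once more (since $-f\upX \leq \ind{A^c}$) to conclude $-e^{\urate t}(f\upX) \leq e^{\urate t}\ind{A^c}$. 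Combining, $\abs{e^{\usubgen t}f(x)} \leq e^{\urate t}\ind{A^c}(x)$ for every $x\in A^c$, so $\norm{e^{\usubgen t}} \leq \max_{x\in A^c} e^{\urate t}\ind{A^c}(x)$.

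To evaluate this last quantity, I would write $\ind{A^c} = \ones - \ind{A}$ and apply the constant-shift identity together with conjugacy to obtain $e^{\urate t}\ind{A^c} = \ones + e^{\urate t}(-\ind{A}) = \ones - e^{\lrate t}\ind{A}$. By Assumption~\ref{ass:reachable}, $e^{\lrate t}\ind{A}(x) > 0$ for every $x\in A^c$ and every $t\in\realspos$; because $A^c$ is finite, the minimum is attained and strictly positive, giving $\max_{x\in A^c} e^{\urate t}\ind{A^c}(x) = 1 - \min_{x\in A^c} e^{\lrate t}\ind{A}(x) < 1$, which completes the proof. The main obstacle I anticipate is bookkeeping around the non-linear operator $e^{\urate t}$: sub-additivity supplies only one-sided inequalities, so the constant-shift identity and the conjugacy-based bound on $-e^{\urate t}(f\upX)$ both require pairing an upper estimate with a carefully chosen lower estimate rather than a single direct computation.
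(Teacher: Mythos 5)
Your proof is correct. Both you and the paper reduce the claim to the same key estimate, namely that $\norm{\smash{e^{\usubgen t}}}$ is bounded by $1-\min_{x\in A^c}e^{\lrate t}\ind{A}(x)$, which is strictly less than $1$ by Assumption~\ref{ass:reachable} and the finiteness of $A^c$. The difference lies in how that estimate is obtained. The paper invokes the duality of Proposition~\ref{prop:duality_trans}: it picks a concrete transition matrix $T\in\mathcal{T}_t$ attaining $Tf\upX=e^{\urate t}f\upX$ and then runs the computation $\abs{Tf\upX(x)}\leq T\ind{A^c}(x)=1-T\ind{A}(x)\leq 1-e^{\lrate t}\ind{A}(x)$ entirely with the linear matrix $T$. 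You instead stay at the level of the non-linear operator $e^{\urate t}$ and derive everything from the axioms of upper transition operators: monotonicity, the constant-shift identity $e^{\urate t}(c\ones+g)=c+e^{\urate t}g$, conjugacy $e^{\urate t}(-g)=-e^{\lrate t}g$, and the domination $e^{\lrate t}g\leq e^{\urate t}g$, arriving at $\abs{e^{\urate t}f\upX(x)}\leq e^{\urate t}\ind{A^c}(x)=1-e^{\lrate t}\ind{A}(x)$. Your route is more self-contained in that it never needs the attainment half of Proposition~\ref{prop:duality_trans} (the existence of a maximizing $T$), only elementary consequences of the operator axioms that the paper uses elsewhere anyway; the paper's route is shorter once duality is in hand, since the linearity of $T$ makes the triangle-inequality step on $\sum_y T(x,y)f\upX(y)$ immediate. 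Your handling of the two-sided bound via conjugacy and domination is exactly the right way to close the gap that sub-additivity alone leaves open, so there is no hole there.
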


It is a straightforward consequence of the use of the supremum norm, together with Equation~\eqref{eq:semigroup_domination} and the fact that $e^{Qt}$ and $e^{\urate t}$ are (upper) transition operators, that also $\norm{\smash{e^{Gt}}} \leq \norm{\smash{e^{\usubgen t}}} <1$ for all $t\in\realspos$. Hence by the semigroup property we immediately have that $\lim_{t\to+\infty}\norm{\smash{e^{Gt}}}=0$. This also implies the following well-known result.
\begin{proposition}{\citep[Thm IV.1.4]{taylor1958introduction}}\label{prop:resolvent_existence}
	For any $Q\in\rateset$ with subgenerator $G$, and all $t>0$, the inverse operator $(I-e^{G t})^{-1}$ exists, and $(I-e^{G t})^{-1}=\sum_{k=0}^{+\infty}e^{Gtk}$.
\end{proposition}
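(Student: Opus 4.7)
The plan is to apply the standard Neumann series argument for bounded linear operators, which is exactly the result cited from Taylor, but I would briefly outline the key steps to verify the preconditions are met in our setting.

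First, I would establish that $\|e^{Gt}\| < 1$ for all $t > 0$. This is noted in the paragraph preceding the proposition: by Equation~\eqref{eq:semigroup_domination} we have $e^{Qt}f \leq e^{\urate t}f$ pointwise, and since both $e^{Qt}$ and $e^{\urate t}$ are (upper) transition operators mapping non-negative functions to non-negative functions, restricting to $A^c$ yields $\|e^{Gt}\| \leq \|e^{\usubgen t}\| < 1$ by Proposition~\ref{prop:upper_subsemigroup_contractive}.

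Next, since $G$ is a linear operator (being the restriction of the matrix $Q$ to $A^c$), so is $e^{Gt}$, and by the semigroup property established in Proposition~\ref{prop:subsemigroup_precise} we have $e^{Gtk} = (e^{Gt})^k$ for all $k\in\natswith$. Hence $\|e^{Gtk}\| \leq \|e^{Gt}\|^k$, and since $\|e^{Gt}\| < 1$ the geometric series $\sum_{k=0}^{+\infty} \|e^{Gt}\|^k$ converges. By completeness of the space of bounded linear operators on $\gamblesAc$ under the operator norm, the partial sums $S_N \coloneqq \sum_{k=0}^{N} e^{Gtk}$ form a Cauchy sequence and hence converge to some bounded linear operator $S$.

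Finally, to identify $S$ as the inverse of $(I - e^{Gt})$, I would use the telescoping identity
\begin{equation*}
    (I - e^{Gt}) S_N = S_N (I - e^{Gt}) = I - e^{Gt(N+1)}.
\end{equation*}
Since $\|e^{Gt(N+1)}\| \leq \|e^{Gt}\|^{N+1} \to 0$ as $N\to +\infty$, taking limits on both sides yields $(I - e^{Gt}) S = S(I - e^{Gt}) = I$, which proves both that $(I - e^{Gt})^{-1}$ exists and that it equals $S = \sum_{k=0}^{+\infty} e^{Gtk}$. There is no real obstacle here: the only non-trivial ingredient is the norm bound, which has already been established, and the rest is the classical Neumann series argument that justifies citing~\citep[Thm IV.1.4]{taylor1958introduction} directly.
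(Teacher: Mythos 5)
Your proposal is correct and follows essentially the same route as the paper: the paper establishes the bound $\norm{e^{Gt}}\leq\norm{\smash{e^{\usubgen t}}}<1$ in the paragraph preceding the proposition (exactly as you do, via Equation~\eqref{eq:semigroup_domination} and Proposition~\ref{prop:upper_subsemigroup_contractive}) and then invokes the cited Neumann-series theorem, whose classical argument you simply spell out. No gaps; the only point worth noting is that the pointwise domination step for general (sign-changing) $f$ implicitly uses $\abs{e^{Gt}f}\leq e^{Gt}\abs{f}\leq e^{\usubgen t}\abs{f}$, which is the same bookkeeping the paper leaves implicit.
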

This allows us to characterize hitting times for discrete-time homogeneous Markov chains whose transition matrix is given by $e^{Qt}$, as follows.
\begin{proposition}\label{prop:discrete_precise_by_inverse}
	Choose any $Q\in\rateset$, let $G$ be its subgenerator, and fix any $\Delta>0$. Let $P\in\mathbb{P}^{\mathrm{HM}}_{\natswith}$ be such that ${}^{P}T=e^{Q\Delta}$. Then the expected hitting times $h\coloneqq \mathbb{E}_P[\tau_{\natswith}\,\vert\,X_0]$ satisfy $h\resAc = (I-e^{G\Delta})^{-1}\ones$ and $h(x)=0$ for all $x\in A$.
\end{proposition}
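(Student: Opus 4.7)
The plan is to reduce the continuous-time statement to the discrete-time characterization of Proposition~\ref{prop:precise_discr_system}, applied to $P$ with its transition matrix ${}^PT = e^{Q\Delta}$, and then to restrict the resulting linear system to $A^c$. Since Assumption~\ref{ass:reachable} together with Proposition~\ref{prop:upper_subsemigroup_contractive} (and the comment following it) give $\norm{e^{G\Delta}} < 1$, the expected hitting time is finite everywhere, so no extended-real analysis is needed.

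First I would invoke Proposition~\ref{prop:precise_discr_system} with $T = e^{Q\Delta}$ to obtain that $h$ satisfies
\begin{equation*}
h = \ind{A^c} + \ind{A^c} e^{Q\Delta} h\,.
\end{equation*}
Reading this equation pointwise at any $x \in A$ immediately gives $h(x) = 0$, which handles the second claim.

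Next I would restrict to $A^c$. For $x \in A^c$ the equation becomes $h(x) = 1 + (e^{Q\Delta} h)(x)$. Because $h$ vanishes on $A$ by the previous step, the sum $\sum_{y\in\states} e^{Q\Delta}(x,y) h(y)$ collapses to $\sum_{y\in A^c} e^{Q\Delta}(x,y) h(y)$, which by definition of the restriction operator is exactly $(e^{Q\Delta}\resAc\, h\resAc)(x) = (e^{G\Delta}\, h\resAc)(x)$. Thus on $A^c$ we obtain the equation
\begin{equation*}
h\resAc = \ones + e^{G\Delta} h\resAc\,,
\end{equation*}
or equivalently $(I - e^{G\Delta}) h\resAc = \ones$.

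Finally I would apply Proposition~\ref{prop:resolvent_existence} to invert the operator $I - e^{G\Delta}$ and conclude $h\resAc = (I - e^{G\Delta})^{-1} \ones$. The non-negativity of this expression (consistent with the minimality requirement of Proposition~\ref{prop:precise_discr_system}) is automatic from the series expansion $(I - e^{G\Delta})^{-1} = \sum_{k=0}^{+\infty} e^{G\Delta k}$, since each $e^{G\Delta k}$ is the restriction of a transition matrix and thus has non-negative entries. The only mildly subtle step is checking that $h$ is genuinely finite so that the algebraic manipulation above is legitimate; this follows because $\norm{e^{G\Delta}}<1$ gives $\norm{(e^{Q\Delta})^n \ind{A^c}} \leq \norm{e^{G\Delta}}^n \to 0$, so $P(\tau_{\natswith} > n \mid X_0 = x)$ decays geometrically and the expectation is finite. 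I do not anticipate any genuinely hard step; the content is really just Proposition~\ref{prop:precise_discr_system} together with the invertibility result of Proposition~\ref{prop:resolvent_existence}.
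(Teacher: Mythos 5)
Your proposal is correct and follows essentially the same route as the paper's own proof: invoke Proposition~\ref{prop:precise_discr_system} with ${}^PT=e^{Q\Delta}$, observe $h\vert_A=0$ so the system collapses to $(I-e^{G\Delta})h\resAc=\ones$ on $A^c$, and invert via Proposition~\ref{prop:resolvent_existence}. Your additional remarks on finiteness and non-negativity are sound but not needed beyond what Assumption~\ref{ass:reachable} already guarantees.
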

\begin{proof}
	By Proposition~\ref{prop:precise_discr_system}, in $x\in A^c$ we have that
	\begin{equation*}
		h(x) = \ind{A^c}(x) + \ind{A^c}(x) e^{Q\Delta}h(x) = 1 + e^{Q\Delta}h(x)\,.
	\end{equation*}
	Conversely, it is immediate from the definition that $h(x)=0$ for all $x\in A$. This implies that $h=(h\resAc)\upX$, and hence
	\begin{equation*}
		h\resAc = \ones + \bigl(e^{Q\Delta}(h\resAc)\upX\bigr)\resAc = \ones + e^{G\Delta}h\resAc\,.
	\end{equation*}
	Re-ordering terms we have $(I - e^{G\Delta})h\resAc = \ones$. Now use Proposition~\ref{prop:resolvent_existence} and multiply with $(I-e^{G\Delta})^{-1}$.
\end{proof}

We need the following observation:
\begin{lemma}\label{lemma:subgen_negative_eigen}
	Consider any $Q\in\rateset$ with subgenerator $G$, and let $\sigma(G)$ be the set of eigenvalues of $G$. Then $\mathrm{Re}\,\lambda < 0$ for all $\lambda\in\sigma(G)$.
\end{lemma}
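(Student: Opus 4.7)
The plan is to leverage the norm bound for $e^{Gt}$ that was already established in the discussion following Proposition~\ref{prop:upper_subsemigroup_contractive}, together with the spectral mapping theorem for the matrix exponential. Concretely, we know that $\norm{e^{Gt}} < 1$ for every $t \in \realspos$. Since the spectral radius of any matrix is bounded above by any of its induced operator norms, this gives us $\rho(e^{Gt}) < 1$ for all $t > 0$.

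Next, I would invoke the spectral mapping theorem for matrix exponentials, which in finite dimensions is a standard consequence of the Jordan canonical form of $G$: the spectrum satisfies $\sigma(e^{Gt}) = \{e^{t\lambda} : \lambda \in \sigma(G)\}$. Consequently, for any eigenvalue $\lambda \in \sigma(G)$ and any $t > 0$, the number $e^{t\lambda}$ is an eigenvalue of $e^{Gt}$ and therefore satisfies
\begin{equation*}
    \bigl\lvert e^{t\lambda}\bigr\rvert \leq \rho\bigl(e^{Gt}\bigr) \leq \norm{e^{Gt}} < 1.
\end{equation*}
Writing $\lambda = a + ib$ with $a = \mathrm{Re}\,\lambda$, we have $\lvert e^{t\lambda}\rvert = e^{ta}$, so the inequality above yields $e^{ta} < 1$, i.e., $ta < 0$. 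Since $t > 0$ was arbitrary, we conclude $a = \mathrm{Re}\,\lambda < 0$, which is what we wanted.

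The only subtlety is justifying the spectral mapping step, but in the finite-dimensional setting this is routine: if $G = SJS^{-1}$ is the Jordan decomposition of $G$, then $e^{Gt} = S e^{Jt} S^{-1}$, and $e^{Jt}$ is upper triangular with diagonal entries $e^{t\lambda_i}$ for the eigenvalues $\lambda_i$ of $G$. I expect this to be the only step worth mentioning in the write-up; the rest of the argument is essentially a single inequality chain. If a slicker route is desired, one can bypass the spectral mapping theorem and reason instead from $\rho(G) = \lim_{k\to\infty} \norm{G^k}^{1/k}$ applied to $G_t := e^{Gt} - I$, but the Jordan-form argument is the most direct and self-contained option.
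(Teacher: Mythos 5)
Your argument is correct and follows essentially the same route as the paper's proof: bound the spectral radius of $e^{Gt}$ by its norm (the paper uses $t=1$ and cites Taylor), then apply the spectral mapping theorem for the exponential to conclude $e^{t\,\mathrm{Re}\,\lambda}<1$ and hence $\mathrm{Re}\,\lambda<0$. The only difference is that you justify the spectral mapping step via the Jordan form rather than citing it, which is a fine, self-contained alternative.
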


This implies that $0\notin\sigma(G)$, and so we have:
\begin{corollary}\label{cor:subgen_inverse}
	For any $Q\in\rateset$ with subgenerator $G$, the inverse operator $G^{-1}$ exists.
\end{corollary}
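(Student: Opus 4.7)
The proof is essentially a one-line consequence of Lemma~\ref{lemma:subgen_negative_eigen}, so my plan is simply to spell out why. First I would observe that since $\states$ is finite, so is $A^c$, and hence $G=Q\resAc$ is a linear operator on the finite-dimensional space $\gamblesAc$. Once an ordering of $A^c$ is fixed, $G$ is represented by a square matrix, and for such an operator invertibility is equivalent to the statement that $0$ is not an eigenvalue, i.e. $0\notin\sigma(G)$.

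Then I would invoke Lemma~\ref{lemma:subgen_negative_eigen}, which gives $\mathrm{Re}\,\lambda<0$ for every $\lambda\in\sigma(G)$. In particular $\mathrm{Re}\,0=0\not<0$, so $0\notin\sigma(G)$, and therefore $G^{-1}$ exists as a linear operator on $\gamblesAc$. There is no real obstacle here, as the substantive content lies entirely in Lemma~\ref{lemma:subgen_negative_eigen}; the corollary is simply a convenient repackaging of that fact for use in later arguments (presumably to write closed-form expressions for hitting times in the spirit of Proposition~\ref{prop:discrete_precise_by_inverse}, but now at the level of the generator rather than a discretized transition operator $e^{G\Delta}$).
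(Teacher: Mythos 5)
Your proposal is correct and is exactly the paper's argument: the corollary is stated as an immediate consequence of Lemma~\ref{lemma:subgen_negative_eigen}, since $\mathrm{Re}\,\lambda<0$ for all $\lambda\in\sigma(G)$ implies $0\notin\sigma(G)$, and for a linear operator on the finite-dimensional space $\gamblesAc$ this is equivalent to the existence of $G^{-1}$. No further elaboration is needed.
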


This allows us to characterize hitting times for continuous-time homogeneous Markov chains:
\begin{proposition}\label{prop:cont_precise_by_inverse}
	Choose any $Q\in\rateset$, let $G$ be its subgenerator, and let $P\in\mathbb{P}_{\realsnonneg}^{\mathrm{HM}}$ with $\rateforp=Q$. Then the expected hitting times $h\coloneqq \mathbb{E}_P[\tau_{\realsnonneg}\,\vert\,X_0]$ satisfy $h\resAc = -G^{-1}\ones$ and $h(x)=0$ for all $x\in A$.
\end{proposition}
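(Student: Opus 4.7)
The plan is to mirror the strategy used for the discrete-time analogue in Proposition~\ref{prop:discrete_precise_by_inverse}, but with Proposition~\ref{prop:precise_cont_system} replacing Proposition~\ref{prop:precise_discr_system} and Corollary~\ref{cor:subgen_inverse} playing the role of Proposition~\ref{prop:resolvent_existence}. The argument breaks into three short pieces: verify the precondition of Proposition~\ref{prop:precise_cont_system}, derive from it a linear equation in $h\resAc$, and then invert.

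First I would check that the precondition $\rateforp(x,x) \neq 0$ for all $x \in A^c$ is satisfied; this follows immediately from Assumption~\ref{ass:reachable}, which (as noted in the excerpt) implies this property for every $Q \in \rateset$, and in particular for $\rateforp = Q$. Applying Proposition~\ref{prop:precise_cont_system} then yields that $h$ satisfies $\ind{A}h = \ind{A^c} + \ind{A^c} Q h$ pointwise. Evaluating this in each state splits into two cases: for $x \in A$ both summands on the right vanish, so $h(x) = 0$, establishing the second claim; for $x \in A^c$ the left-hand side vanishes and we obtain $Qh(x) = -1$.

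Second, I would use the vanishing of $h$ on $A$ to identify $h = (h\resAc)\upX$, so that for every $x \in A^c$,
\begin{equation*}
	G(h\resAc)(x) = \bigl(Q(h\resAc)\upX\bigr)\resAc(x) = Qh(x) = -1\,,
\end{equation*}
which rewrites the previous identity as $G(h\resAc) = -\ones$ in $\gamblesAc$. Finally, multiplying both sides by $G^{-1}$, whose existence is furnished by Corollary~\ref{cor:subgen_inverse}, gives $h\resAc = -G^{-1}\ones$.

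I do not anticipate any substantive obstacle here, since all the heavy lifting is already bundled into Corollary~\ref{cor:subgen_inverse} (invertibility of $G$) and into the preconditions secured by Assumptions~\ref{ass:absorbing}--\ref{ass:reachable}. The only bookkeeping worth care is the passage from the pointwise equation $Qh(x) = -1$ on $A^c$ to the operator equation $G(h\resAc) = -\ones$ on $\gamblesAc$, but the subspace notation $(\cdot)\resAc$, $(\cdot)\upX$ introduced in this section makes that passage entirely mechanical.
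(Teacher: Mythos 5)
Your proposal is correct and follows essentially the same route as the paper's proof: apply Proposition~\ref{prop:precise_cont_system} to get $Qh(x)=-1$ on $A^c$, note $h\vert_A=0$ so that $h=(h\resAc)\upX$ and hence $Gh\resAc=-\ones$, then invert via Corollary~\ref{cor:subgen_inverse}. The only cosmetic differences are that you explicitly verify the precondition $Q(x,x)\neq 0$ on $A^c$ (which the paper handles in the main text after Assumption~\ref{ass:reachable}) and derive $h\vert_A=0$ from the system rather than directly from the definition of $\tau_{\realsnonneg}$; both are fine.
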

\begin{proof}
	By Proposition~\ref{prop:precise_cont_system}, in $x\in A^c$ we have that
	\begin{equation*}
		-1 = -\ind{A^c}(x) = \ind{A^c}(x) Qh(x) = Qh(x)\,.
	\end{equation*}
	Conversely, it is immediate from the definition that $h(x)=0$ for all $x\in A$. This implies $h = (h\resAc)\upX$, and hence
	\begin{equation*}
		Gh\resAc = \bigl(Q(h\resAc)\upX)\bigr\resAc = (Qh)\resAc = -\ones\,.
	\end{equation*}
	Now use Corollary~\ref{cor:subgen_inverse} and multiply with $G^{-1}$.
\end{proof}

\subsection{Quasicontractivity of Subspace Dynamics}\label{sec:quasicontractive}

We already know from Proposition~\ref{prop:upper_subsemigroup_contractive} that $\norm{\smash{e^{\usubgen t}}}<1$ for all $t\in\realspos$. Since $\smash{e^{\usubgen 0}}=I$ (because it is a semigroup), it follows that $\norm{\smash{e^{\usubgen t}}}\leq 1$ for all $t\in\realsnonneg$. A semigroup that satisfies this property is said to be \emph{contractive}. Moreover, Proposition~\ref{prop:upper_subsemigroup_contractive} together with the semigroup property implies that $\lim_{t\to+\infty}\norm{\smash{e^{\usubgen t}}}=0$. A semigroup that satisfies this property is said to be \emph{uniformly exponentially stable}, and in such a case the following result holds:
\begin{proposition}\label{prop:subsemigroup_ues}
	There are $M\geq 1$ and $\xi>0$ such that $\norm{\smash{e^{\usubgen t}}} \leq M e^{-\xi t}$ for all $t\in\realsnonneg$.
\end{proposition}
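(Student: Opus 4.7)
The plan is to derive the bound from the strict contractivity at a single positive time, via the standard semigroup bootstrapping trick.

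First I would establish that the semigroup is contractive, i.e. $\norm{e^{\usubgen t}} \leq 1$ for all $t \in \realsnonneg$. For $t=0$ this is immediate since $e^{\usubgen 0} = I$. For $t>0$ one can either cite Proposition~\ref{prop:upper_subsemigroup_contractive} (which gives the stronger strict inequality), or observe directly that $e^{\urate t}$ is an upper transition operator on $\gamblesX$ (hence has induced operator norm at most $1$), and that the restriction $e^{\usubgen t}f = (e^{\urate t}(f\upX))\resAc$ can only decrease the supremum norm since $\norm{f\upX} = \norm{f}$.

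Second, I would fix any $t_0 > 0$ and set $c \coloneqq \norm{e^{\usubgen t_0}}$. Proposition~\ref{prop:upper_subsemigroup_contractive} gives $c < 1$, and contractivity gives $c \leq 1$ (so also $c > 0$ may or may not hold; I will handle $c=0$ as a degenerate case at the end). By the semigroup property from Proposition~\ref{prop:subsemigroup_imprecise}, together with submultiplicativity of the induced operator norm, I would obtain by a trivial induction that $\norm{e^{\usubgen (nt_0)}} \leq c^n$ for every $n \in \natswith$.

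Third, for an arbitrary $t \in \realsnonneg$ I would write $t = n t_0 + r$ with $n = \lfloor t / t_0 \rfloor \in \natswith \cup \{0\}$ and $r \in [0, t_0)$, and then invoke the semigroup property and contractivity to estimate
\begin{equation*}
    \norm{e^{\usubgen t}} \leq \norm{e^{\usubgen (n t_0)}}\,\norm{e^{\usubgen r}} \leq c^n \cdot 1 = c^n\,.
\end{equation*}
Using $n \geq t/t_0 - 1$ and the fact that $\ln c < 0$, this rewrites as
\begin{equation*}
    c^n = e^{n\ln c} \leq e^{(t/t_0 - 1)\ln c} = \tfrac{1}{c}\, e^{-\xi t}\,,
\end{equation*}
where $\xi \coloneqq -\ln c / t_0 > 0$. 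Setting $M \coloneqq \nicefrac{1}{c} \geq 1$ yields the desired bound. In the pathological case $c=0$, contractivity already gives $\norm{e^{\usubgen t}}=0$ for all $t \geq t_0$, so any $\xi > 0$ with $M = e^{\xi t_0}$ works for $t < t_0$ by the trivial bound $\norm{e^{\usubgen t}} \leq 1 \leq M e^{-\xi t}$.

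I do not expect any serious obstacle here: the conceptual heavy lifting (the strict contractivity at positive times) has already been done in Proposition~\ref{prop:upper_subsemigroup_contractive}, and the remainder is a routine semigroup argument. The only minor care needed is to justify the contractivity bound $\norm{e^{\usubgen r}} \leq 1$ for $r \in [0, t_0)$ independently of Proposition~\ref{prop:upper_subsemigroup_contractive}, since the latter only asserts the strict inequality for positive times and we additionally need the $r=0$ case; that is handled by $e^{\usubgen 0} = I$.
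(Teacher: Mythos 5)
Your proposal is correct and follows essentially the same route as the paper's proof: fix a single positive time at which Proposition~\ref{prop:upper_subsemigroup_contractive} gives strict contraction, use contractivity on the remainder interval together with the semigroup property and submultiplicativity of the induced norm, and bootstrap to an exponential bound with $M=\nicefrac{1}{c}$ and $\xi=-\ln c/t_0$ (the paper simply takes $t_0=1$). Your explicit treatment of the degenerate case $c=0$ and of the $r=0$ endpoint is a minor extra care the paper glosses over, but it does not change the argument.
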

This result means that the norm $\norm{\smash{e^{\usubgen t}}}$ decays exponentially as $t$ grows. However, for technical reasons we require an exponentially decaying norm bound with $M=1$; if this holds the semigroup is said to be \emph{quasicontractive}.

It is not clear that obtaining such a bound is possible when $\norm{\smash{e^{\usubgen t}}}$ is induced by the supremum norm $\norm{\cdot}$ on $\gamblesAc$. However, we can get it by defining a \emph{different} norm $\norm{\cdot}_*$ on $\gamblesAc$. We then obtain the quasicontractivity with respect to the induced operator norm $\norm{\cdot}_*$. Because $\gamblesAc$ is finite-dimensional these norms are equivalent, and such a result suffices for our purposes. This re-norming trick is originally due to~\citet{feller1953generation}, and an analogous construction is commonly used for semigroups of linear operators; see e.g.~\citep[Thm 12.21]{renardyrogers2004intropde}.

So, consider the $\xi>0$ from Proposition~\ref{prop:subsemigroup_ues}, and let
\begin{equation}\label{eq:alternative_norm}
	\norm{f}_* \coloneqq \sup_{t\in\realsnonneg} \norm{e^{\xi t}e^{\usubgen t}\abs{f}}\quad\text{for all $f\in\gamblesAc$,}
\end{equation} 
where $\abs{f}$ denotes the elementwise-absolute value of $f$.

\begin{proposition}\label{prop:newnorm_is_norm}
	The map $f\mapsto\norm{f}_*$ is a norm on $\gamblesAc$.
\end{proposition}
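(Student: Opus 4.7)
The plan is to verify each of the four defining properties of a norm, using the fact that $e^{\usubgen t}$ inherits the structural properties of an upper transition operator from $e^{\urate t}$, together with the exponential stability bound from Proposition~\ref{prop:subsemigroup_ues}. First, I would establish \emph{finiteness}: since $\norm{e^{\usubgen t}} \leq M e^{-\xi t}$ by Proposition~\ref{prop:subsemigroup_ues}, the quantity inside the supremum is bounded by $e^{\xi t} M e^{-\xi t} \norm{\abs{f}} = M \norm{f}$, so $\norm{f}_* \leq M \norm{f} < +\infty$ for all $f \in \gamblesAc$. Non-negativity is immediate from $\norm{f}_* \geq 0$ as a supremum of norms.

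Next, for \emph{absolute homogeneity}, for any $\lambda \in \reals$ we have $\abs{\lambda f} = \abs{\lambda}\abs{f}$ pointwise, and since $\abs{\lambda} \in \realsnonneg$, the non-negative homogeneity of $\usubgen$ (inherited from $\urate$ via the restriction construction) lifts to $e^{\usubgen t}$, yielding $e^{\usubgen t}(\abs{\lambda}\abs{f}) = \abs{\lambda} e^{\usubgen t}\abs{f}$, so $\norm{\lambda f}_* = \abs{\lambda} \norm{f}_*$. For \emph{definiteness}, the $t = 0$ term in the supremum gives $\norm{f}_* \geq \norm{e^{\usubgen 0}\abs{f}} = \norm{\abs{f}} = \norm{f}$, since $(e^{\usubgen t})_{t \in \realsnonneg}$ is a semigroup and hence $e^{\usubgen 0} = I$. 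Combined with the finiteness bound this gives $\norm{f} \leq \norm{f}_* \leq M\norm{f}$, showing in particular that $\norm{f}_* = 0$ iff $f = 0$ (and, as a bonus, that $\norm{\cdot}_*$ is equivalent to the supremum norm, which will be useful later).

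For the \emph{triangle inequality}, the key step is that $e^{\usubgen t}$ is monotone and subadditive on $\gamblesAc$. Monotonicity follows because $\urate$ (and hence $e^{\urate t}$, by Proposition~\ref{prop:subsemigroup_imprecise} together with the axioms of upper rate / transition operators in Section~\ref{subsec:imprecise_dynamics}) satisfies $\overline{T} f \leq \overline{T} g$ whenever $f \leq g$, and the restriction to $A^c$ preserves this since the extension map $\upX$ is monotone. Subadditivity of $e^{\usubgen t}$ follows analogously from subadditivity of $e^{\urate t}$ combined with the linearity of $\upX$. Applying $\abs{f + g} \leq \abs{f} + \abs{g}$ pointwise, then monotonicity, then subadditivity, and finally the triangle inequality for the supremum norm $\norm{\cdot}$, we obtain
\begin{equation*}
\norm{e^{\xi t} e^{\usubgen t} \abs{f + g}} \leq e^{\xi t}\norm{e^{\usubgen t}\abs{f}} + e^{\xi t}\norm{e^{\usubgen t}\abs{g}},
\end{equation*}
and taking the supremum over $t \in \realsnonneg$ on both sides, using $\sup(a_t + b_t) \leq \sup a_t + \sup b_t$, yields $\norm{f+g}_* \leq \norm{f}_* + \norm{g}_*$.

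I do not anticipate any deep obstacle here; the main subtlety is bookkeeping — namely, checking that the defining axioms of upper transition / rate operators (subadditivity, monotonicity, non-negative homogeneity) transfer cleanly from $e^{\urate t}$ on $\gamblesX$ to its restriction $e^{\usubgen t}$ on $\gamblesAc$ through the $f \mapsto f\upX$ extension. This is essentially routine given the definition of $M\resAc$ in Section~\ref{subsec:subspace_dynamics}, and it only has to be done once.
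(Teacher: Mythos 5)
Your proof is correct and follows essentially the same route as the paper's: definiteness via the $t=0$ term and $e^{\usubgen 0}=I$, absolute homogeneity via $\abs{cf}=\abs{c}\abs{f}$ and non-negative homogeneity, and the triangle inequality via $\abs{f+g}\leq\abs{f}+\abs{g}$ followed by monotonicity and subadditivity of the upper transition operator $e^{\usubgen t}$. The only difference is that you additionally verify finiteness of the supremum (and note the explicit norm equivalence $\norm{f}\leq\norm{f}_*\leq M\norm{f}$) using Proposition~\ref{prop:subsemigroup_ues}, a point the paper's proof leaves implicit but which is a welcome addition.
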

Moreover, we have the desired result:
\begin{proposition}\label{prop:renormed_quasicontractive}
	We have $\norm{\smash{e^{\usubgen t}}}_* \leq e^{-\xi t}$ for all $t\in\realsnonneg$.
\end{proposition}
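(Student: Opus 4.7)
The plan is to show that $\norm{e^{\usubgen t} f}_* \leq e^{-\xi t}\norm{f}_*$ holds for every $f\in\gamblesAc$; since $e^{\usubgen t}$ is non-negatively homogeneous (being the restriction of an upper transition operator), this pointwise bound on arguments immediately yields $\norm{e^{\usubgen t}}_* \leq e^{-\xi t}$ by the definition of the induced operator norm.

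The first ingredient I would establish is the pointwise bound $\abs{e^{\usubgen t} f} \leq e^{\usubgen t}\abs{f}$. Working first with $e^{\urate t}$ on all of $\gamblesX$, I would note that $e^{\urate t}$ is an upper transition operator and is therefore \emph{monotone}: this follows from sub-additivity applied to the decomposition $g = f + (g-f)$, together with the axiom that $\urate(-h) \leq 0$ whenever $h\geq 0$ (which in turn uses $\underline{T}h \geq \min h$). Applying monotonicity to $f \leq \abs{f}$ gives $e^{\urate t}f \leq e^{\urate t}\abs{f}$; sub-additivity applied to $f + (-f) = 0$ gives $-e^{\urate t}f \leq e^{\urate t}(-f) \leq e^{\urate t}\abs{f}$. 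Restricting to $A^c$, and noting that $\abs{f}\upX = \abs{f\upX}$, delivers $\abs{e^{\usubgen t}f} \leq e^{\usubgen t}\abs{f}$.

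Next, applying $e^{\usubgen s}$ to both sides (both sides are non-negative, so monotonicity of $e^{\usubgen s}$ applies) and invoking the semigroup property from Proposition~\ref{prop:subsemigroup_imprecise}, I obtain
\begin{equation*}
e^{\usubgen s}\abs{e^{\usubgen t}f} \leq e^{\usubgen s}e^{\usubgen t}\abs{f} = e^{\usubgen(s+t)}\abs{f}.
\end{equation*}
Since both sides are non-negative, the supremum norm preserves the inequality after scaling by $e^{\xi s} > 0$. Substituting into the definition~\eqref{eq:alternative_norm} and reindexing with $u = s+t$:
\begin{equation*}
\norm{e^{\usubgen t} f}_* \leq \sup_{s\geq 0}\norm{e^{\xi s} e^{\usubgen(s+t)}\abs{f}} = e^{-\xi t}\sup_{u\geq t}\norm{e^{\xi u} e^{\usubgen u}\abs{f}} \leq e^{-\xi t}\norm{f}_*,
\end{equation*}
which is the desired conclusion.

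The main obstacle is purely bookkeeping: deriving the auxiliary inequality $\abs{\overline{T}f}\leq \overline{T}\abs{f}$ from the axioms of an upper transition operator, and checking that restricting from $\gamblesX$ to $\gamblesAc$ via the extension-by-zero construction preserves monotonicity and the pointwise bound. These are standard consequences of the axioms listed in Section~\ref{subsec:imprecise_dynamics}, but they are not stated explicitly in the text and therefore warrant a short verification. Once that is in place, the main computation above is a direct reindexing argument mirroring the classical re-norming trick of~\citet{feller1953generation}.
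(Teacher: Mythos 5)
Your proposal is correct and follows essentially the same route as the paper's proof: establish $\abs{\smash{e^{\usubgen t}}f}\leq e^{\usubgen t}\abs{f}$, apply $e^{\usubgen s}$, use the semigroup property, and reindex the supremum in the definition of $\norm{\cdot}_*$ to extract the factor $e^{-\xi t}$. The only cosmetic difference is that you derive the auxiliary inequality directly from the axioms of upper transition operators, whereas the paper obtains it via the dominating set of transition matrices in Lemma~\ref{lemma:newnorm_precise_bounded_upper}; both are valid.
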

Finally, the same bound holds for precise models:
\begin{proposition}\label{prop:precise_quasicontractive}
	For any $Q\in\rateset$ with subgenerator $G$ it holds that $\norm{e^{Gt}}_*\leq e^{-\xi t}$ for all $t\in\realsnonneg$.
\end{proposition}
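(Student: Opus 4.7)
The plan is to reduce the proposition to a single chain of pointwise inequalities that lets us slot the bound on $e^{\usubgen t}$ from Proposition~\ref{prop:renormed_quasicontractive} into the $*$-norm of $e^{Gt}f$. Fix any $f \in \gamblesAc$; by definition of the induced operator norm, it suffices to show that $\norm{e^{Gt}f}_* \leq e^{-\xi t}\norm{f}_*$.

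First I would observe that $e^{Gt}$ has only non-negative entries, because it is the restriction to $A^c$ of the transition matrix $e^{Qt}$ (which has non-negative entries since it is a stochastic matrix by the classical theory cited above Proposition~\ref{prop:subsemigroup_precise}). A linear operator with non-negative entries satisfies the pointwise triangle inequality $\abs{e^{Gt}f} \leq e^{Gt}\abs{f}$. Next, Equation~\eqref{eq:semigroup_domination}, applied to the extended function $(\abs{f})\upX$ and then restricted to $A^c$, yields $e^{Gt}\abs{f} \leq e^{\usubgen t}\abs{f}$, using the definitions $e^{Gt} = e^{Qt}\resAc$ and $e^{\usubgen t} = e^{\urate t}\resAc$. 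Concatenating these,
\begin{equation*}
\abs{e^{Gt}f} \leq e^{\usubgen t}\abs{f}\,.
\end{equation*}

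Now I would push this inequality through the outer $e^{\usubgen s}$ in the definition~\eqref{eq:alternative_norm} of $\norm{\cdot}_*$. The operator $e^{\usubgen s}$ is monotone; indeed, $e^{\urate s}$ is an upper transition operator, and subadditivity together with $\overline{T}g(x) \leq \max_y g(y)$ gives $\overline{T}f \leq \overline{T}g$ whenever $f\leq g$, a property inherited by the restriction to $A^c$. Applying $e^{\usubgen s}$ to both sides and using the semigroup property from Proposition~\ref{prop:subsemigroup_imprecise},
\begin{equation*}
e^{\usubgen s}\abs{e^{Gt}f} \leq e^{\usubgen s}e^{\usubgen t}\abs{f} = e^{\usubgen(s+t)}\abs{f}\,.
\end{equation*}
Multiplying by $e^{\xi s}$, taking the supremum norm on $\gamblesAc$, and rewriting $e^{\xi s} = e^{-\xi t}e^{\xi(s+t)}$ produces $\norm{e^{\xi s}e^{\usubgen s}\abs{e^{Gt}f}} \leq e^{-\xi t}\norm{e^{\xi(s+t)}e^{\usubgen(s+t)}\abs{f}}$. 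Finally, taking the supremum over $s \in \realsnonneg$ gives a supremum of the right-hand side over $r = s+t \in [t,+\infty) \subseteq \realsnonneg$, which is bounded by $\norm{f}_*$; hence $\norm{e^{Gt}f}_* \leq e^{-\xi t}\norm{f}_*$, as required.

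The only place where care is needed is the very first step, namely the inequality $\abs{e^{Gt}f} \leq e^{\usubgen t}\abs{f}$: the other ingredients (monotonicity of $e^{\usubgen s}$, the semigroup property, and the definition of $\norm{\cdot}_*$) are direct consequences of material already established. Everything else is a clean manipulation of the chain of inequalities, so I do not foresee substantial obstacles beyond justifying that the non-negativity of entries of $e^{Gt}$ and the domination from Equation~\eqref{eq:semigroup_domination} can indeed be combined in this way.
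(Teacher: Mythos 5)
Your proof is correct and follows essentially the same route as the paper, which obtains the result by combining Lemma~\ref{lemma:newnorm_precise_bounded_upper} (whose proof uses the same domination chain $\abs{e^{Gt}f}\leq e^{Gt}\abs{f}\leq e^{\usubgen t}\abs{f}$ followed by monotonicity of $e^{\usubgen s}$) with Proposition~\ref{prop:renormed_quasicontractive} (whose proof uses the same index-shift $s\mapsto s+t$ in the supremum defining $\norm{\cdot}_*$). You simply inline both arguments into one chain---which in fact streamlines matters slightly, since you avoid the detour in the paper's Lemma~\ref{lemma:newnorm_precise_bounded_upper} through a norm-attaining $f$ with $\norm{e^{Gt}}_*=\norm{e^{Gt}\abs{f}}_*$.
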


\section{Hitting Times as Limits}\label{sec:hits_as_limits}

We now have all the pieces to explain the proof of our main results. The trick will be to establish a connection between hitting times for continuous-time imprecise-Markov chains, and hitting times for \emph{discrete}-time imprecise-Markov chains, for which analogous results were previously established by~\citet{krak2019hitting}. 

We essentially just look at a discretized continuous-time Markov chain taking steps of some fixed size $\Delta>0$, derive the expected hitting time for this discrete-time Markov chain, and then take the limit $\Delta\to 0^+$. The main difficulty is in establishing that this converges uniformly for all elements in our sets of processes; this is why we went through the trouble of establishing quasicontractivity in Section~\ref{sec:quasicontractive}. 

To start, for any $Q\in\rateset$ and $\Delta>0$, let $h^Q_\Delta$ be the minimal non-negative solution to the linear system\footnote{Note the re-scaled term $\Delta\ind{A^c}$ on the right-hand side, which distinguishes this from the system in Proposition~\ref{prop:precise_discr_system}; this is required since the hitting times for discrete-time Markov chains are expressed in the \emph{number} of steps, and to pass to continuous-time we need to measure the size of these steps.}
\begin{equation}
	h^Q_\Delta = \Delta \ind{A^c} + \ind{A^c} e^{Q\Delta}h^Q_\Delta\,,
\end{equation}
and let $h^Q$ be the minimal non-negative solution to
\begin{equation}
	\ind{A}h^Q = \ind{A^c} + \ind{A^c}Qh^Q\,.
\end{equation}
Then we know from Propositions~\ref{prop:precise_discr_system} and~\ref{prop:precise_cont_system} that $\nicefrac{1}{\Delta}h^Q_\Delta$ represents the expected hitting times of a discrete-time homogeneous Markov chain with transition matrix $e^{Q\Delta}$, and that $h^Q$ does the same for a continuous-time homogeneous Markov chain with rate matrix $Q$. We now have the following result:
\begin{proposition}\label{prop:precise_uniform_limit}
	There are $\delta>0$ and $L>0$ such that $\norm{h^Q_\Delta - h^Q}<\Delta L\norm{h^Q}$ for all $\Delta\in(0,\delta)$ and all $Q\in\rateset$.
\end{proposition}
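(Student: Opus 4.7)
The plan is to express both $h^Q$ and $h^Q_\Delta$ in closed form and reduce the problem to an operator-level estimate. Writing $G \coloneqq Q\resAc$ for the subgenerator, Proposition~\ref{prop:discrete_precise_by_inverse} gives $h^Q_\Delta\resAc = \Delta (I - e^{G\Delta})^{-1}\ones$, and Proposition~\ref{prop:cont_precise_by_inverse} gives $h^Q\resAc = -G^{-1}\ones$; since both functions vanish on $A$, we have $\norm{h^Q_\Delta - h^Q} = \norm{h^Q_\Delta\resAc - h^Q\resAc}$ and $\norm{h^Q} = \norm{h^Q\resAc}$, so the analysis can take place entirely on $\gamblesAc$.

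The key manipulation I would employ is the factorization $I - e^{G\Delta} = -\Delta\,G\cdot S(G\Delta)$, where $S$ is the entire operator-valued function with power series $S(M) \coloneqq \sum_{k=0}^{+\infty} M^k / (k+1)!$; scalar-wise this is $S(x) = (e^x - 1)/x$, extended by $S(0) = 1$. Since $G$ commutes with $S(G\Delta)$, inverting yields $\Delta (I - e^{G\Delta})^{-1} = -S(G\Delta)^{-1}G^{-1}$, so that
\[
h^Q_\Delta\resAc = S(G\Delta)^{-1}\,h^Q\resAc, \qquad h^Q_\Delta\resAc - h^Q\resAc = \bigl(S(G\Delta)^{-1} - I\bigr)\,h^Q\resAc.
\]
The task therefore reduces to showing that $\norm{S(G\Delta)^{-1} - I}$ is $O(\Delta)$, \emph{uniformly} over $Q \in \rateset$.

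For the uniform bound, the crucial ingredient is the compactness of $\rateset$ in the operator norm (Proposition~\ref{prop:duality_rate}), which furnishes a single $B > 0$ with $\norm{G} \leq \norm{Q} \leq B$ for every $Q\in\rateset$. A termwise estimate of the tail gives $\norm{S(G\Delta) - I} \leq \Delta B\, e^{\Delta B}$, uniformly in $Q$. Choosing $\delta > 0$ so small that $\delta B\, e^{\delta B} < \tfrac{1}{2}$, the Neumann series converges for every $\Delta \in (0,\delta)$ and furnishes $\norm{S(G\Delta)^{-1}} \leq 2$. The identity $S(G\Delta)^{-1} - I = S(G\Delta)^{-1}\bigl(I - S(G\Delta)\bigr)$ then yields $\norm{S(G\Delta)^{-1} - I} \leq 2\Delta B\, e^{\delta B}$; inflating the constant slightly (e.g.\ $L \coloneqq 3 B\, e^{\delta B}$) converts the resulting weak inequality into the claimed strict one.

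The main obstacle is exactly the uniformity of all constants over $Q \in \rateset$ that the authors flag in the introductory paragraph of Section~\ref{sec:hits_as_limits}. My approach secures this by leaning on the compactness of $\rateset$ to bound $\norm{G}$, and thereby sidesteps the quasicontractivity machinery of Section~\ref{sec:quasicontractive} altogether. An alternative route---probably closer to the one the authors have in mind, given the emphasis on re-norming---would write $\Delta (I - e^{G\Delta})^{-1}\ones = \Delta \sum_{k=0}^{+\infty} e^{G k\Delta}\ones$ as a Riemann sum for $-G^{-1}\ones = \int_0^{+\infty} e^{Gt}\ones\,\mathrm{d}t$, and then control the discretization error uniformly in $Q$ via the exponential decay estimate $\norm{e^{Gt}}_* \leq e^{-\xi t}$ of Proposition~\ref{prop:precise_quasicontractive}.
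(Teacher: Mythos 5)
Your argument is correct, but it takes a genuinely different route from the paper's. The paper writes $h^Q_\Delta\resAc-h^Q\resAc=e^{G\Delta}(h^Q_\Delta\resAc-h^Q\resAc)+Bh^Q\resAc$ with $B=e^{G\Delta}-(I+\Delta G)$, iterates this to obtain $(I-e^{G\Delta})^{-1}Bh^Q\resAc$, and then needs two uniform estimates: $\norm{B}\leq\Delta^2\norm{\rateset}^2$ and, crucially, the resolvent bound $\norm{(I-e^{G\Delta})^{-1}}<\nicefrac{C}{\Delta}$ of Lemma~\ref{lemma:resolvent_uniform_bound}, which rests on the quasicontractive estimate $\norm{e^{G\Delta}}_*\leq e^{-\xi\Delta}$ from Section~\ref{sec:quasicontractive}. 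Your factorization $I-e^{G\Delta}=-\Delta G\,S(G\Delta)$ sidesteps that resolvent bound entirely: the $\Theta(\nicefrac{1}{\Delta})$ singularity of the resolvent is carried by $G^{-1}$, which recombines with $\ones$ to give exactly $h^Q\resAc$, and the remaining factor $S(G\Delta)^{-1}$ is tamed by a Neumann series using only the uniform bound $\norm{G}\leq\norm{Q}\leq\norm{\rateset}$ available from the boundedness of $\rateset$. What this buys is a more elementary and self-contained proof of this particular proposition, with a constant $L$ that is explicit in $\norm{\rateset}$ alone rather than depending on the non-constructive $\xi$ and the norm-equivalence constant hidden in $C$; what it does not buy is any global simplification, since the quasicontractivity machinery is still needed downstream (in Proposition~\ref{prop:imprecise_limit} and Theorem~\ref{thm:lower_upper_hitting_system}), so within the paper the authors' route amortizes tools they must build anyway. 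Two small points to tidy up: the inversion of the factorization is only legitimate once the Neumann series has established that $S(G\Delta)$ is invertible, so state that ordering explicitly; and the strictness of the final inequality via the inflated constant requires $\norm{h^Q}>0$, which holds because $Gh^Q\resAc=-\ones\neq 0$ (Proposition~\ref{prop:cont_precise_by_inverse}) and $A^c\neq\emptyset$.
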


Since $\norm{h^Q}$ is bounded due to Proposition~\ref{prop:cont_precise_by_inverse}:
\begin{corollary}\label{cor:precise_discretisation_converges}
	We have $\lim_{\Delta\to 0^+}h^Q_\Delta=h^Q$ for all $Q\in\rateset$.
\end{corollary}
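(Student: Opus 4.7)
The plan is to simply combine Proposition~\ref{prop:precise_uniform_limit} with the finiteness of $\norm{h^Q}$ that follows from Proposition~\ref{prop:cont_precise_by_inverse}. First, I would invoke Proposition~\ref{prop:cont_precise_by_inverse} to write $h^Q\resAc = -G^{-1}\ones$ and $h^Q(x)=0$ for $x\in A$, where $G=Q\resAc$ is the subgenerator of $Q$. Since $\gamblesAc$ is finite-dimensional and $G^{-1}$ exists by Corollary~\ref{cor:subgen_inverse}, the quantity $\norm{h^Q} = \norm{G^{-1}\ones}$ is a finite (real) number, say $C_Q \coloneqq \norm{h^Q} < +\infty$.

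Next, fix $Q\in\rateset$ and let $\delta>0$, $L>0$ be the constants supplied by Proposition~\ref{prop:precise_uniform_limit}. For any $\Delta\in(0,\delta)$, that proposition yields
\begin{equation*}
    \norm{h^Q_\Delta - h^Q} < \Delta L C_Q\,.
\end{equation*}
Since $L C_Q$ does not depend on $\Delta$, the right-hand side tends to $0$ as $\Delta\to 0^+$. Hence $\lim_{\Delta\to 0^+}\norm{h^Q_\Delta - h^Q} = 0$, which is exactly the claim $\lim_{\Delta\to 0^+}h^Q_\Delta = h^Q$ (in the supremum norm on $\gamblesX$, and hence pointwise, since $\states$ is finite).

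There is essentially no obstacle to this argument, as all the hard work has already been done in Proposition~\ref{prop:precise_uniform_limit}; the only thing to verify is that $\norm{h^Q}$ is genuinely finite, which is immediate from the explicit formula $h^Q\resAc = -G^{-1}\ones$ in Proposition~\ref{prop:cont_precise_by_inverse} together with the existence of $G^{-1}$ guaranteed by Corollary~\ref{cor:subgen_inverse}. (Note that although Proposition~\ref{prop:precise_uniform_limit} in fact delivers a \emph{uniform} bound over all $Q\in\rateset$, for this corollary we only need the pointwise statement for each fixed $Q$.)
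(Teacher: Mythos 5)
Your argument is correct and is exactly the one the paper intends: the corollary follows from the uniform bound $\norm{h^Q_\Delta - h^Q} < \Delta L\norm{h^Q}$ of Proposition~\ref{prop:precise_uniform_limit} once $\norm{h^Q}$ is known to be finite, which the paper likewise justifies via Proposition~\ref{prop:cont_precise_by_inverse} (and Corollary~\ref{cor:subgen_inverse}). No issues.
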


We will now set up the analogous results for imprecise-Markov chains. First, let
\begin{equation}
	\underline{h}\coloneqq \inf_{Q\in\rateset} h^Q 
	\quad\text{and}\quad
	\overline{h}\coloneqq \sup_{Q\in\rateset} h^Q\,.
\end{equation}
Clearly, it follows from Proposition~\ref{prop:precise_cont_system} and the definition of lower- and upper expectations that these quantities represent the lower- and upper expected hitting times for the imprecise-Markov chain $\mathcal{P}_\rateset^{\mathrm{HM}}$, i.e. it holds that
\begin{equation*}
	\underline{h} = \underline{\mathbb{E}}_\rateset^{\mathrm{HM}}\bigl[\tau_{\realsnonneg}\,\vert\,X_0\bigr]
	\quad\text{and}\quad
	\overline{h} = \overline{\mathbb{E}}_\rateset^{\mathrm{HM}}\bigl[\tau_{\realsnonneg}\,\vert\,X_0\bigr]\,.
\end{equation*}
Now for any $\Delta>0$, let $\underline{h}_\Delta$ and $\overline{h}_\Delta$ denote the minimal non-negative solutions to the \emph{non-linear} systems
\begin{equation}\label{eq:lower_discretizes_system}
	\underline{h}_\Delta = \Delta\ind{A^c} + \ind{A^c} e^{\lrate \Delta}\underline{h}_\Delta
\end{equation}
and
\begin{equation}\label{eq:upper_discretizes_system}
	\overline{h}_\Delta = \Delta\ind{A^c} + \ind{A^c} e^{\urate \Delta}\overline{h}_\Delta\,.
\end{equation}
It was previously shown by~\citet{krak2019hitting} that---up to re-scaling with $\nicefrac{1}{\Delta}$---the quantities $\underline{h}_\Delta$ and $\overline{h}_\Delta$ represent the lower (resp. upper) expected hitting times of, identically, the discrete-time imprecise-Markov chains $\mathcal{P}_{\mathcal{T}_\Delta}^{\mathrm{HM}}$, $\mathcal{P}_{\mathcal{T}_\Delta}^{\mathrm{M}}$, and $\mathcal{P}_{\mathcal{T}_\Delta}^{\mathrm{I}}$ parameterized by the set $\mathcal{T}_{\Delta}$ of transition matrices that dominate $e^{\lrate\Delta}$. We now set out of prove an analogous result for continuous-time imprecise-Markov chains. We start with the following:
\begin{proposition}\label{prop:imprecise_limit}
	It holds that $\lim_{\Delta\to 0^+}\underline{h}_\Delta =  \underline{h}$ and $\lim_{\Delta\to 0^+}\overline{h}_\Delta = \overline{h}$.
\end{proposition}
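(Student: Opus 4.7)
The plan is a sandwich argument establishing $\limsup_{\Delta\to 0^+}\underline{h}_\Delta\leq\underline{h}$ and $\liminf_{\Delta\to 0^+}\underline{h}_\Delta\geq\underline{h}$ separately, with the claim for $\overline{h}_\Delta$ handled by a symmetric construction at the end. \textbf{Upper bound.} For any $Q\in\rateset$, Equation~\eqref{eq:semigroup_domination} gives $e^{\lrate\Delta}f\leq e^{Q\Delta}f$ whenever $f\geq 0$, so $h^Q_\Delta$ is a non-negative supersolution of the system~\eqref{eq:lower_discretizes_system} that defines $\underline{h}_\Delta$; the monotone iteration started at $0$ then stays below $h^Q_\Delta$ at each step and converges to $\underline{h}_\Delta$, hence $\underline{h}_\Delta\leq h^Q_\Delta$ and $\underline{h}_\Delta\leq\inf_{Q\in\rateset}h^Q_\Delta$. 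Using that $\sup_{Q\in\rateset}\norm{h^Q}<\infty$ (from Proposition~\ref{prop:cont_precise_by_inverse}, compactness of $\rateset$, and continuity of matrix inversion), Proposition~\ref{prop:precise_uniform_limit} delivers uniform convergence $h^Q_\Delta\to h^Q$ in $Q$, whence $\inf_Q h^Q_\Delta\to\underline{h}$.

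\textbf{Lower bound.} The idea is to identify $\underline{h}_\Delta$ with the continuous-time hitting time of a specific rate matrix that asymptotically lies in $\rateset$. By Proposition~\ref{prop:duality_trans} applied to $e^{\lrate\Delta}$, there exists a transition matrix $T_\Delta$ in its dominating set with $T_\Delta\underline{h}_\Delta=e^{\lrate\Delta}\underline{h}_\Delta$. Setting $Q_\Delta\coloneqq(T_\Delta-I)/\Delta$, this is a rate matrix (zero row-sums and non-negative off-diagonals), and its entries are bounded uniformly in $\Delta$: off-diagonals are controlled by the conjugate bound $T_\Delta f\leq -e^{\lrate\Delta}(-f)$ (which is $O(\Delta)$ on $\ind{y}$ for $y\neq x$), and diagonals by the zero-row-sum property. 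A direct calculation on $A^c$ gives $(I-T_\Delta|_{A^c})\underline{h}_\Delta|_{A^c}=\Delta\ones$, equivalently $Q_\Delta|_{A^c}\underline{h}_\Delta|_{A^c}=-\ones$; since Assumption~\ref{ass:reachable} forces $T_\Delta|_{A^c}$ to have spectral radius strictly below $1$ and hence $Q_\Delta|_{A^c}$ to be invertible, the formula of Proposition~\ref{prop:cont_precise_by_inverse} yields $\underline{h}_\Delta=h^{Q_\Delta}$. Extracting any convergent subsequence $Q_{\Delta_n}\to Q^*$ via Bolzano--Weierstrass and passing to the limit in $Q_{\Delta_n}f\geq(e^{\lrate\Delta_n}f-f)/\Delta_n\to\lrate f$ (the latter being the semigroup derivative at $0$, implicit in Proposition~\ref{prop:subsemigroup_imprecise}), the limit satisfies $Q^*f\geq\lrate f$ for all $f$, so $Q^*\in\rateset_{\lrate}=\rateset$ by Proposition~\ref{prop:duality_rate}. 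Continuity of $Q\mapsto h^Q$ then gives $\underline{h}_{\Delta_n}=h^{Q_{\Delta_n}}\to h^{Q^*}\geq\underline{h}$; since every subsequential limit equals $\underline{h}$, the full sequence converges.

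\textbf{Main obstacle and upper case.} The delicate step is ensuring that any limit point $Q^*$ of $(Q_\Delta)$ lies in the closed set $\rateset$, rather than merely being a rate matrix that asymptotically dominates $\lrate$; this hinges on the standing closure/separately-specified-rows assumptions on $\rateset$ (which via Proposition~\ref{prop:duality_rate} yield $\rateset=\rateset_{\lrate}$) together with the semigroup derivative $(e^{\lrate\Delta}-I)/\Delta\to\lrate$. For $\overline{h}_\Delta\to\overline{h}$, the argument is symmetric: $h^Q_\Delta$ becomes a subsolution of system~\eqref{eq:upper_discretizes_system}, and Proposition~\ref{prop:renormed_quasicontractive} makes the corresponding iteration a contraction in $\norm{\cdot}_*$ with unique fixed point $\overline{h}_\Delta$, giving $h^Q_\Delta\leq\overline{h}_\Delta$; the duality step then picks $T'_\Delta$ dominated by $e^{\urate\Delta}$ with $T'_\Delta\overline{h}_\Delta=e^{\urate\Delta}\overline{h}_\Delta$, and the analogous extraction produces $Q'_\Delta\to Q^{**}\in\rateset$ with $\overline{h}_\Delta=h^{Q'_\Delta}\to h^{Q^{**}}\leq\overline{h}$, closing the loop.
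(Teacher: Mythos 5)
Your proposal is correct, but it takes a genuinely different route for the hard direction. The easy half coincides with the paper's: the bound $\underline{h}_\Delta\leq h^Q_\Delta$ (which the paper gets from Lemma~\ref{lemma:discrete_infimum_bound} and $e^{Q\Delta}\in\mathcal{T}_\Delta$, and which you re-derive by a monotone supersolution iteration) combined with the uniform estimate of Proposition~\ref{prop:precise_uniform_limit} and compactness of $\rateset$. For the other inequality, the paper identifies $\underline{h}_{\Delta_n}$ with the hitting time of a genuine non-homogeneous Markov chain in $\mathcal{P}^\mathrm{M}_\rateset$ (Proposition~\ref{prop:lower_discrete_time_by_markov}), approximates its transition matrix by products $\prod_i(I+\nicefrac{\Delta}{m}Q_i)$ (Proposition~\ref{prop:markov_transmat_as_product}), averages the $Q_i$ into a single element of $\rateset$ by convexity, and then chains four separate norm estimates~\eqref{eq:prop:imprecise_limit:step_1}--\eqref{eq:prop:imprecise_limit:step_4} before a final \emph{ex absurdo} step. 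You instead take the argmin transition matrix $T_\Delta\in\mathcal{T}_\Delta$ for $\underline{h}_\Delta$ itself (Proposition~\ref{prop:duality_trans}), form $Q_\Delta\coloneqq(T_\Delta-I)/\Delta$, and obtain the \emph{exact} identity $\underline{h}_\Delta\resAc=-(Q_\Delta\resAc)^{-1}\ones$ with no approximation error; the only limit you then need is $(e^{\lrate\Delta}-I)/\Delta\to\lrate$ to place subsequential limits of $Q_\Delta$ in $\rateset_{\lrate}=\rateset$. This is shorter and more self-contained: it bypasses Propositions~\ref{prop:discrete_lower_by_limit_of_transmat}--\ref{prop:markov_transmat_as_product} and the external results they invoke, at the price of working with rate matrices $Q_\Delta$ that need not lie in $\rateset$, so you must (and essentially do) re-establish for them the facts that the paper only states for $Q\in\rateset$: that $A$ is absorbing for $Q_\Delta$ (which follows from $T_\Delta\vert_A=I$, itself a consequence of Corollary~\ref{cor:exponential_identity_on_A} and domination), that $Q_\Delta\resAc$ is invertible (from $\norm{T_\Delta\resAc}<1$, as in Proposition~\ref{prop:upper_subsemigroup_contractive}), and that $\norm{Q_\Delta}$ is bounded uniformly in $\Delta$. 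Two small presentational corrections: the limit $(e^{\lrate\Delta}-I)/\Delta\to\lrate$ is not ``implicit in Proposition~\ref{prop:subsemigroup_imprecise}'' but is the cited result~\citep[Prop 9]{de2017limit} used in the proof of Theorem~\ref{thm:lower_upper_hitting_system}; and you should not quote Proposition~\ref{prop:cont_precise_by_inverse} or Corollary~\ref{cor:subgen_inverse} verbatim for $Q_\Delta\notin\rateset$, but rather the elementary computation behind them, which your sketch in fact reproduces.
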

This property allows us to leverage recent results by~\citet{erreygers2021phd} and~\citet{krak2021phd} regarding discrete and finite approximations of lower- and upper expectations in continuous-time imprecise-Markov chains, to obtain our first main result:
\begin{theorem}\label{thm:hitting_times_invariant}
	It holds that
	\begin{equation*}
		\underline{h} = \underline{\mathbb{E}}_\rateset^{\mathrm{HM}}\bigl[\tau_{\realsnonneg}\,\vert\,X_0\bigr] = \underline{\mathbb{E}}_\rateset^{\mathrm{M}}\bigl[\tau_{\realsnonneg}\,\vert\,X_0\bigr] = \underline{\mathbb{E}}_\rateset^{\mathrm{I}}\bigl[\tau_{\realsnonneg}\,\vert\,X_0\bigr]\,,
	\end{equation*}
	and, moreover, that
	\begin{equation*}
		\overline{h} = \overline{\mathbb{E}}_\rateset^{\mathrm{HM}}\bigl[\tau_{\realsnonneg}\,\vert\,X_0\bigr] = \overline{\mathbb{E}}_\rateset^{\mathrm{M}}\bigl[\tau_{\realsnonneg}\,\vert\,X_0\bigr] = \overline{\mathbb{E}}_\rateset^{\mathrm{I}}\bigl[\tau_{\realsnonneg}\,\vert\,X_0\bigr]\,.
	\end{equation*}
\end{theorem}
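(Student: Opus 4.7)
The plan is to prove both chains of equalities by combining the nested inclusions $\mathcal{P}^{\mathrm{HM}}_\rateset \subseteq \mathcal{P}^{\mathrm{M}}_\rateset \subseteq \mathcal{P}^{\mathrm{I}}_\rateset$ with a time-discretisation argument that passes through the already-established discrete-time theorem of~\citet{krak2019hitting}. The nesting immediately yields $\underline{\mathbb{E}}_\rateset^{\mathrm{I}}[\tau_\realsnonneg \vert X_0] \leq \underline{\mathbb{E}}_\rateset^{\mathrm{M}}[\tau_\realsnonneg \vert X_0] \leq \underline{\mathbb{E}}_\rateset^{\mathrm{HM}}[\tau_\realsnonneg \vert X_0] = \underline{h}$ and, dually, $\overline{h} = \overline{\mathbb{E}}_\rateset^{\mathrm{HM}}[\tau_\realsnonneg \vert X_0] \leq \overline{\mathbb{E}}_\rateset^{\mathrm{M}}[\tau_\realsnonneg \vert X_0] \leq \overline{\mathbb{E}}_\rateset^{\mathrm{I}}[\tau_\realsnonneg \vert X_0]$. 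It therefore suffices to prove the two outer inequalities $\underline{h} \leq \underline{\mathbb{E}}_\rateset^{\mathrm{I}}[\tau_\realsnonneg \vert X_0]$ and $\overline{\mathbb{E}}_\rateset^{\mathrm{I}}[\tau_\realsnonneg \vert X_0] \leq \overline{h}$.

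For the lower inequality, fix $\Delta>0$ and any $P\in\mathcal{P}^{\mathrm{I}}_\rateset$, and let $\tau^\Delta(\omega) \coloneqq \inf\{k\in\natswith : \omega(k\Delta)\in A\}$. Because Assumption~\ref{ass:absorbing} makes $A$ absorbing, one obtains the pathwise sandwich $\tau_\realsnonneg \leq \Delta\tau^\Delta \leq \tau_\realsnonneg + \Delta$ (with the convention that $\tau_\realsnonneg=+\infty$ makes both sides infinite), so integrating against $P$ gives $\mathbb{E}_P[\tau_\realsnonneg\vert X_0] \geq \Delta\mathbb{E}_P[\tau^\Delta\vert X_0] - \Delta$. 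The structural step is that the $\Delta$-skeleton $(X_{k\Delta})_{k\in\natswith}$ of $P$ is a discrete-time process whose history-dependent one-step transitions lie in $\mathcal{T}_\Delta$, i.e.\ dominate $e^{\lrate\Delta}$ in the sense of Proposition~\ref{prop:duality_trans}; thus the skeleton is an element of $\mathcal{P}^{\mathrm{I}}_{\mathcal{T}_\Delta}$. Invoking the discrete-time hitting-time theorem of~\citet{krak2019hitting} in conjunction with Equation~\eqref{eq:lower_discretizes_system} then yields $\Delta\mathbb{E}_P[\tau^\Delta\vert X_0] \geq \underline{h}_\Delta$. Taking the infimum over $P\in\mathcal{P}^{\mathrm{I}}_\rateset$ produces $\underline{\mathbb{E}}_\rateset^{\mathrm{I}}[\tau_\realsnonneg \vert X_0] \geq \underline{h}_\Delta - \Delta$, and sending $\Delta\to 0^+$ via Proposition~\ref{prop:imprecise_limit} closes the lower direction.

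The upper inequality is entirely symmetric: the same sandwich gives $\mathbb{E}_P[\tau_\realsnonneg\vert X_0] \leq \Delta\mathbb{E}_P[\tau^\Delta\vert X_0]$, the dual skeleton argument (with $\overline{Q}$ in place of $\underline{Q}$ and Equation~\eqref{eq:upper_discretizes_system}) together with the upper discrete-time result of~\citet{krak2019hitting} gives $\Delta\mathbb{E}_P[\tau^\Delta\vert X_0] \leq \overline{h}_\Delta$, and taking the supremum over $P$ followed by $\Delta\to 0^+$ through Proposition~\ref{prop:imprecise_limit} yields $\overline{\mathbb{E}}_\rateset^{\mathrm{I}}[\tau_\realsnonneg \vert X_0] \leq \overline{h}$.

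The main obstacle I expect is the middle step of both directions, namely the rigorous verification that the $\Delta$-skeleton of an arbitrary element of $\mathcal{P}^{\mathrm{I}}_\rateset$ belongs to $\mathcal{P}^{\mathrm{I}}_{\mathcal{T}_\Delta}$. For the differentiable members this follows by integrating the history-dependent rate specification and applying Equation~\eqref{eq:semigroup_domination} to the $Q\in\rateset$ realising their instantaneous behaviour along each history; for the non-differentiable members one must invoke the finite-approximation consistency results of~\citet{erreygers2021phd,krak2021phd} to see that the one-step conditional distributions of the skeleton still dominate $e^{\lrate\Delta}$. Once this structural claim is secured, the remainder of the argument is a direct combination of Assumption~\ref{ass:absorbing}, the sandwich, and Proposition~\ref{prop:imprecise_limit}.
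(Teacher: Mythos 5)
Your reduction via the nesting $\mathcal{P}^{\mathrm{HM}}_\rateset\subseteq\mathcal{P}^{\mathrm{M}}_\rateset\subseteq\mathcal{P}^{\mathrm{I}}_\rateset$ to the two outer inequalities is exactly how the paper closes its argument, but your route to those inequalities is genuinely different. The paper works with hitting times that are both discretized \emph{and truncated} to a finite horizon $[0,t]$, constructs a dominating function $\hat{\tau}$, proves that $\overline{\mathbb{E}}^\mathrm{I}_\rateset[\hat{\tau}\,\vert\,X_0]$ is bounded, and only then passes to the double limit $t\to+\infty$, $\Delta\to0^+$ via an imprecise dominated convergence theorem of \citet{erreygers2021phd}; the game-theoretic expectations of \citet{krak2019hitting} are used to control the truncation level. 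You instead compare $\tau_{\realsnonneg}$ pathwise with the \emph{untruncated} skeleton hitting time and push the entire infinite-horizon difficulty into the already-proven discrete-time theorem, applied to the skeleton pushforward of each individual precise $P$. If the structural claim holds, this bypasses the dominated-convergence machinery and the $\hat{\tau}$-boundedness argument entirely, which is a real simplification; both proofs then rely on Proposition~\ref{prop:imprecise_limit} to send $\Delta\to0^+$.

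Two steps need more care than you give them. First, the bound $\Delta\tau^\Delta\leq\tau_{\realsnonneg}+\Delta$ is not pathwise: a cadlag path can enter $A$ between grid points and leave again, and Assumption~\ref{ass:absorbing} constrains the rate matrices (hence the measures), not individual sample paths. You need the almost-sure statement that every $P\in\mathcal{P}^{\mathrm{I}}_\rateset$ is absorbed in $A$; since $e^{\lrate u}\ind{A}(x)=1$ for $x\in A$ (Corollary~\ref{cor:exponential_identity_on_A}), every finite-dimensional conditional of $P$ keeps mass on $A$, and a countable intersection over rational time pairs plus right-continuity upgrades this to a.s.\ pathwise absorption---but this must be argued, because the model is specified only through finite-dimensional conditionals. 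Second, your justification of the structural claim via Equation~\eqref{eq:semigroup_domination} does not work as stated: that inequality concerns a single fixed $Q$, whereas a non-homogeneous, history-dependent element of $\mathcal{P}^{\mathrm{I}}_\rateset$ has transition operators over $[n\Delta,(n+1)\Delta]$ given by non-autonomous product integrals of varying rates, and one needs the separate result that such operators still dominate $e^{\lrate\Delta}$. This is precisely the content of the results of \citet{krak2021phd} that the paper itself invokes (its Corollary 6.24 and Theorem 7.2), and it covers the non-differentiable members too; it is what should be cited, and it is where essentially all of the technical weight of your proof sits. With those two repairs the argument goes through.
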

Moreover, it follows relatively straightforwardly from Proposition~\ref{prop:imprecise_limit} that the lower- and upper expected hitting times for continuous-time imprecise-Markov chains satisfy an immediate generalization of the system that characterizes the expected hitting times for (precise) continuous-time homogeneous Markov chains. This is our second main result:
\begin{theorem}\label{thm:lower_upper_hitting_system}
	Let $\underline{h}$ and $\overline{h}$ denote the lower- and upper expected hitting times for any one of $\mathcal{P}^\mathrm{HM}_\rateset$, $\mathcal{P}^\mathrm{M}_\rateset$, or $\mathcal{P}^\mathrm{I}_\rateset$. Then $\underline{h}$ is the minimal non-negative solution to the non-linear system $\ind{A}\underline{h}=\ind{A^c} + \ind{A^c}\lrate\,\underline{\vphantom{Q}h}$, and $\overline{h}$ is the minimal non-negative solution to the non-linear system $\ind{A}\overline{h}=\ind{A^c} + \ind{A^c}\urate\,\overline{h}$.
\end{theorem}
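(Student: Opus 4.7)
The plan is to reduce to the HM case via Theorem~\ref{thm:hitting_times_invariant}, which together with Prop~\ref{prop:precise_cont_system} identifies $\underline{h}=\inf_{Q\in\rateset}h^Q$ and $\overline{h}=\sup_{Q\in\rateset}h^Q$. On $A$ both sides of both systems vanish, and $\underline{h}(x)=\overline{h}(x)=0$ is immediate because $\tau_{\realsnonneg}=0$ on paths starting in $A$. The substantive work is on $A^c$ and splits into showing (i) the equations actually hold there, and (ii) minimality among non-negative solutions.

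For (i), I would start from the discretized identities~\eqref{eq:lower_discretizes_system} and~\eqref{eq:upper_discretizes_system} and pass to the limit $\Delta\to 0^+$ using Prop~\ref{prop:imprecise_limit}. Concretely, on $A^c$ the lower discretized equation rewrites as $\Delta^{-1}(e^{\lrate\Delta}\underline{h}_\Delta-\underline{h}_\Delta)=-\ones$, and the goal is to identify the left-hand side with $\lrate\underline{h}$ in the limit. The difficulty is that the ``test function'' $\underline{h}_\Delta$ itself moves with $\Delta$, so pointwise semigroup generation alone is insufficient; I would handle this by establishing a uniform second-order bound $\norm{e^{\lrate\Delta}f-f-\Delta\lrate f}\leq C\Delta^2\norm{f}$ valid on any bounded subset of $\gamblesX$, using the construction~\eqref{eq:lower_rate_limit} together with the Lipschitz continuity of $\lrate$ and the finite-dimensionality of $\gamblesX$. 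Combined with $\norm{\underline{h}_\Delta-\underline{h}}\to 0$ and continuity of $\lrate$, this yields $\lrate\underline{h}=-\ones$ on $A^c$; the argument for $\overline{h}$ is symmetric, with $\urate$ in place of $\lrate$.

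For (ii), the two sides need different arguments. For the lower system, suppose $h'\geq 0$ satisfies $\ind{A}h'=\ind{A^c}+\ind{A^c}\lrate h'$; then $h'=0$ on $A$ and $\lrate h'=-\ind{A^c}$, so Prop~\ref{prop:duality_rate} yields $Q^*\in\rateset$ with $Q^*h'=\lrate h'$, which together with $h'=0$ on $A$ forces $G^*h'\resAc=-\ones$ for $G^*=Q^*\resAc$; Corollary~\ref{cor:subgen_inverse} and Prop~\ref{prop:cont_precise_by_inverse} then give $h'=h^{Q^*}$, whence $h'\geq\inf_Q h^Q=\underline{h}$. For the upper system the attainer argument gives the wrong inequality direction, so instead I would use that $\urate h'=-\ind{A^c}$ implies $Qh'\leq-\ones=Qh^Q$ on $A^c$ for \emph{every} $Q\in\rateset$, hence $G(h'-h^Q)\resAc\leq 0$. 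The integral representation $-G^{-1}=\int_0^{+\infty}e^{Gs}\,\mathrm{d}s$---convergent because $\norm{e^{Gs}}\leq\norm{e^{\usubgen s}}$ decays exponentially by Prop~\ref{prop:upper_subsemigroup_contractive} and the semigroup property---exhibits $-G^{-1}$ as a matrix with non-negative entries, and applying it to the inequality yields $h'\geq h^Q$ for every $Q\in\rateset$, hence $h'\geq\sup_Q h^Q=\overline{h}$.

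The main obstacle is the uniform second-order semigroup bound in step (i): Prop~\ref{prop:imprecise_limit} supplies only convergence $\underline{h}_\Delta\to\underline{h}$ without an a priori rate, so pushing the finite-difference operator $\Delta^{-1}(e^{\lrate\Delta}\,\cdot\,-\,\cdot\,)$ through the limit requires a quadratic Taylor-type estimate that holds uniformly over bounded families of test functions. This is the technical bridge that ties the discrete-time characterization of~\citet{krak2019hitting} to the continuous-time system of Theorem~\ref{thm:lower_upper_hitting_system}.
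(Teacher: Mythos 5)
Your proof is correct, and it splits into parts that track the paper closely and one part that takes a genuinely different route. Step (i) is essentially the paper's argument: the paper also rewrites the discretized systems as $\ind{A}\underline{h}_\Delta=\ind{A^c}+\ind{A^c}\Delta^{-1}(e^{\lrate\Delta}-I)\underline{h}_\Delta$ and passes to the limit, handling the moving test function by citing the operator-norm convergence $\Delta^{-1}(e^{\lrate\Delta}-I)\to\lrate$ from the literature rather than re-deriving the quadratic bound $\norm{e^{\lrate\Delta}f-f-\Delta\lrate f}\leq C\Delta^2\norm{f}$ as you propose; your ``main obstacle'' is thus a known estimate and not a real gap. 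Your lower-minimality argument is also the paper's (attainer from Proposition~\ref{prop:duality_rate}, then the precise characterization), just phrased directly via Corollary~\ref{cor:subgen_inverse} instead of \emph{ex absurdo}. The real divergence is the upper-minimality step. The paper builds the affine map $H(f)=\Delta\ones+(I+\Delta G)f$ for the attaining $Q$, proves it is a contraction in the renormed space $\norm{\cdot}_*$ of Section~\ref{sec:quasicontractive}, identifies $\overline{h}\resAc$ as its unique fixed point, establishes monotonicity of $H$ and of the corresponding imprecise map $\overline{H}$, and squeezes any competing solution $g$ via $g=\overline{H}^n(g)\geq(H^n(g\resAc))\upX\to\overline{h}\resAc\upX$. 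You instead observe that any non-negative solution $h'$ satisfies $Gh'\resAc\leq-\ones=Gh^Q\resAc$ for \emph{every} $Q\in\rateset$, and invert using the entrywise non-negative operator $-G^{-1}=\int_0^{+\infty}e^{Gs}\,\mathrm{d}s$ (an inverse-positivity, or M-matrix, argument), giving $h'\geq h^Q$ for all $Q$ and hence $h'\geq\overline{h}$. Your route is shorter and avoids the Banach fixed-point and monotone-iteration machinery for this step; what the paper's version buys is that it reuses the quasicontractivity apparatus already built for Proposition~\ref{prop:precise_uniform_limit} and never needs the entrywise sign structure of $-G^{-1}$, only norm bounds. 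Both arguments are sound.
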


\section{Summary \& Conclusion}\label{sec:summary}

We have investigated the problem of characterizing expected hitting times for continuous-time imprecise-Markov chains. We have shown that under two relatively mild assumptions on the system's class structure---\emph{viz.} that the target states are absorbing, and can be reached by any non-target state---the corresponding lower (resp. upper) expected hitting time is the same for all three types of imprecise-Markov chains.

We have also demonstrated that these lower- and upper expected hitting times $\underline{h}$ and $\overline{h}$ satisfy the non-linear systems
\begin{equation*}
	\ind{A}\underline{h}=\ind{A^c} + \ind{A^c}\lrate\,\underline{\vphantom{Q}h}
	\quad\text{and}\quad
	\ind{A}\overline{h} = \ind{A^c} + \ind{A^c}\urate\,\overline{h}\,,
\end{equation*}
in analogy with the precise linear system~\eqref{eq:prop:precise_cont_system}. 
Indeed, we conclude that the lower- and upper expected hitting times for any of these three types of imprecise-Markov chains, can be fully characterized as the unique \emph{minimal} non-negative solutions to these respective systems.

We aim to strengthen these results in future work to hold with fewer assumptions on the system's class structure.
%

\section*{acknowledgements} 
	
	We would like to sincerely thank Jasper De Bock for many stimulating discussions on the subject of imprecise-Markov chains, and for pointing out a technical error in an earlier draft of this work. We are also grateful for the constructive feedback of three anonymous reviewers.

\bibliography{krak_660}

\cleardoublepage

\appendix
\renewcommand\thesection{\Alph{section}}

\section{Proofs and Lemmas for Section~\ref{subsec:subspace_dynamics}}

For certain operators, we note that subspace restriction distributes over operator composition:
\begin{lemma}\label{lemma:restriction_distributes}
	Let $M$ and $N$ be operators on $\gamblesX$ such that $N\vert_A = I$. Then $\bigl(MN\bigr)\resAc = M\resAc N\resAc$.
\end{lemma}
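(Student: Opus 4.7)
The plan is to verify the identity by unfolding the definition of subspace restriction, which reduces the lemma to a single structural claim about $N$.

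Fix any $f \in \gamblesAc$ and set $h \coloneqq N(f\upX) \in \gamblesX$. By the definition of $(\cdot)\resAc$, the left-hand side satisfies $(MN)\resAc f = \bigl(M(N(f\upX))\bigr)\resAc = (Mh)\resAc$. For the right-hand side, first $N\resAc f = (N(f\upX))\resAc = h\resAc$, and then $M\resAc N\resAc f = M\resAc(h\resAc) = \bigl(M((h\resAc)\upX)\bigr)\resAc$. Both expressions are restrictions to $A^c$, so it suffices to prove that $Mh$ and $M((h\resAc)\upX)$ agree on $A^c$.

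I would establish this via the stronger pointwise identity $h = (h\resAc)\upX$ on all of $\states$. By construction the two functions already agree on $A^c$; on $A$, $(h\resAc)\upX$ is identically zero, so the claim reduces to $h\vert_A = 0$, i.e., to the assertion that $N(f\upX)$ vanishes on $A$ whenever $f \in \gamblesAc$. This is the heart of the argument.

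To extract this from the hypothesis $N\vert_A = I$, I would unfold that hypothesis using its definition: for every $g \in \gamblesY{A}$ we have $\bigl(N(g\upX)\bigr)\vert_A = g$. In the transition-operator setting in which the paper applies this lemma, this identity forces $N\ind{A}(x) = 1$ for every $x \in A$, and combined with the constant-preservation and monotonicity properties catalogued in Section~\ref{subsec:imprecise_dynamics} it pins down the values $N(\cdot)(x)$ for $x \in A$ so as to depend only on the argument's values on $A$. Applying that to $f\upX$, whose $A$-values vanish, yields $N(f\upX)(x) = 0$ for all $x \in A$. With $h\vert_A = 0$ in hand, $h = (h\resAc)\upX$ holds pointwise, so $Mh = M((h\resAc)\upX)$, and restricting both sides to $A^c$ delivers the claimed equality.

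The main obstacle is precisely this last structural step: for wholly generic operators, $N\vert_A = I$ does not on its own force the subspace $\{g \in \gamblesX : g\vert_A = 0\}$ to be $N$-invariant, and so the lemma is really being invoked under the tacit understanding that $N$ carries enough additional structure (positivity, normalisation, or, equivalently, arising as a lower, upper, or precise transition operator) to close this gap. Once invariance of that subspace is granted, the remainder of the proof is a purely formal manipulation of the $\upX$ and $\resAc$ maps, as laid out above.
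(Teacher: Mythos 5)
Your argument is correct and follows essentially the same route as the paper's own proof: both reduce the claim to showing that $h\coloneqq N(f\upX)$ vanishes on $A$, so that $h=(h\resAc)\upX$ and hence $M\resAc N\resAc f=(Mh)\resAc=(MN)\resAc f$.

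Your closing worry is also well-founded, and it points at an imprecision in the paper rather than a defect in your argument. Under the paper's literal definition of restriction, the hypothesis $N\vert_A=I$ only constrains $N$ on inputs supported on $A$, and the lemma then fails for generic operators: with $\states=\{1,2\}$ and $A=\{1\}$, take $N(1,1)=N(1,2)=N(2,2)=1$ and $N(2,1)=0$, and $M(1,1)=M(2,1)=1$ and $M(1,2)=M(2,2)=0$; then $N\vert_A=I$, yet $(MN)\resAc=1\neq 0=M\resAc N\resAc$. The paper's proof makes exactly the leap you flag, asserting $Nf\upX(x)=0$ for $x\in A$ directly from $N\vert_A=I$. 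What saves the day is that in every application (Lemma~\ref{lemma:linear_approx_identity_on_A} and Corollary~\ref{cor:exponential_identity_on_A}) the operators in question satisfy the \emph{stronger} property $Nf(x)=f(x)$ for all $f\in\gamblesX$ and all $x\in A$---the proof of Lemma~\ref{lemma:linear_approx_identity_on_A} establishes precisely this, since Assumption~\ref{ass:absorbing} forces $Qf(x)=0$ for every $f\in\gamblesX$ and $x\in A$---and with that reading of ``$N\vert_A=I$'' your reduction closes immediately, with no need to route through positivity or normalisation of transition operators. So your proof is as sound as the paper's, and your diagnosis of where the extra structure is needed is exactly right.
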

\begin{proof}
	Fix any $f\in\gamblesAc$. Then
	\begin{align*}
		M\resAc N\resAc f = M\Bigl( \bigl((Nf\upX)\resAc\bigr)\upX \Bigr)\resAc\,.
	\end{align*}
	Note that since $N\vert_A=I$ and $f\upX(x)=0$ for all $x\in A$, we also have $Nf\upX(x)=0$ for all $x\in A$. Hence in particular, it holds that $\bigl((Nf\upX)\resAc\bigr)\upX = Nf\upX$.
	We therefore find that
	\begin{align*}
		M\resAc N\resAc f &= \bigl(MNf\upX\bigr)\resAc = (MN)\resAc f\,,
	\end{align*}
	which concludes the proof.
\end{proof}

This can be used in particular for certain operators associated with $Q\in\rateset$ and the associated lower- and upper rate operators:
\begin{lemma}\label{lemma:linear_approx_identity_on_A}
	Fix any $\Delta\geq 0$ and any $Q\in\rateset$. Then
	\begin{equation*}
		(I+\Delta Q)\vert_A = (I+\Delta \lrate)\vert_A = (I+\Delta \urate)\vert_A = I\,.
	\end{equation*}
\end{lemma}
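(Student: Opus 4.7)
The plan is to work directly from the definitions, using Assumption~\ref{ass:absorbing} as the sole nontrivial ingredient. First I would observe that the absorbing condition together with the rate-matrix axioms forces the entire row of any $Q \in \rateset$ corresponding to $x \in A$ to vanish: from $Q(x,x)=0$, $Q(x,y)\geq 0$ for $y\neq x$, and $\sum_{y}Q(x,y)=0$, one immediately concludes $Q(x,y)=0$ for every $y\in\states$. Consequently $Qg(x)=0$ for all $g\in\gamblesX$ and all $x\in A$.

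Next I would unfold the restriction notation. Pick an arbitrary $f\in\gamblesY{A}$, extend to $f\upX\in\gamblesX$, and evaluate at $x\in A$:
\begin{equation*}
	(I+\Delta Q)\vert_A f(x) = f\upX(x) + \Delta\, Q(f\upX)(x) = f(x) + 0 = f(x),
\end{equation*}
where the middle term vanishes by the row-zero property just established. This handles the precise case.

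For $\lrate$ and $\urate$, I would exploit that they were introduced as $\lrate f=\inf_{Q\in\rateset}Qf$ and $\urate f=\sup_{Q\in\rateset}Qf$ (see Section~\ref{subsec:imprecise_dynamics}). For any $x\in A$ and any $g\in\gamblesX$, the set $\{Qg(x):Q\in\rateset\}$ is identically $\{0\}$ by the previous paragraph, so both the infimum and supremum are zero. Applied to $g=f\upX$, this gives $\lrate(f\upX)(x)=\urate(f\upX)(x)=0$ for $x\in A$, and the same line of computation as above then yields $(I+\Delta\lrate)\vert_A f = (I+\Delta\urate)\vert_A f = f$.

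I do not anticipate a genuine obstacle: the lemma is essentially a bookkeeping consequence of absorption, and the only subtlety is being careful that the restriction operator $\cdot\vert_A$ is defined via the zero-extension $\upX$, so one must verify the equalities pointwise on $A$ after extending by zero rather than on all of $\states$. Once that is set up, Assumption~\ref{ass:absorbing} does all the work uniformly in $Q\in\rateset$, which is precisely what is needed to push the statement through to the lower and upper rate operators.
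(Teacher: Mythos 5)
Your proposal is correct and follows essentially the same route as the paper's proof: Assumption~\ref{ass:absorbing} together with the rate-matrix axioms kills the entire $x$-row for $x\in A$, hence $Qf(x)=\lrate f(x)=\urate f(x)=0$ there, and the identities follow after unfolding the restriction notation. The only cosmetic difference is that you verify the restriction via the zero-extension explicitly, whereas the paper establishes the pointwise equalities for all $f\in\gamblesX$ and $x\in A$ and declares the restricted statement immediate.
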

\begin{proof}
	Fix any $Q\in\rateset$, and first choose any $f\in\gamblesX$ and $x\in A$. By Assumption~\ref{ass:absorbing} and the definition of rate matrices, we have $Q(x,y)=0$ for all $y\in\states$, whence $Qf(x)=\sum_{y\in\states}Q(x,y)f(y)=0$. Since $Q\in\rateset$ is arbitrary, we also have $\lrate f(x)=0$ and $\urate f(x)=0$. It follows that
	\begin{equation*}
		f(x) = (I+\Delta Q)f(x) = (I+\Delta \lrate)f(x)=(I+\Delta\urate)f(x)\,.
	\end{equation*}
	Since this is true for all $f\in\gamblesX$ and all $x\in A$, the result is now immediate.
\end{proof}
\begin{corollary}\label{cor:exponential_identity_on_A}
	For all $Q\in\rateset$ and $t\in\realsnonneg$ it holds that
	\begin{equation*}
		e^{Qt}\vert_A = e^{\lrate t}\vert_A = e^{\urate t}\vert_A = I\,.
	\end{equation*}
\end{corollary}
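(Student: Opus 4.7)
The plan is to lift Lemma~\ref{lemma:linear_approx_identity_on_A} to the exponential level via the limit characterisations $e^{Qt} = \lim_{n\to+\infty}(I+\nicefrac{t}{n}Q)^n$ from Section~\ref{subsec:precise_dynamics} and the analogous Equation~\eqref{eq:lower_rate_limit} for $e^{\lrate t}$ and $e^{\urate t}$. The key observation is that what is really being shown inside the proof of Lemma~\ref{lemma:linear_approx_identity_on_A} is a stronger \emph{pointwise} claim: for any $f\in\gamblesX$ and any $x\in A$, Assumption~\ref{ass:absorbing} and the rate-matrix row-sum constraint yield $Qf(x)=0$, and taking infima/suprema over $Q\in\rateset$ then gives $\lrate f(x) = \urate f(x) = 0$. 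Hence $(I+\Delta Q)f(x) = (I+\Delta\lrate)f(x) = (I+\Delta\urate)f(x) = f(x)$ for every $f\in\gamblesX$, not only for extensions of the form $g\upX$ with $g\in\gamblesY{A}$.

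With this stronger pointwise identity in hand, a trivial induction on $k$ shows that for any $n\in\nats$, any $f\in\gamblesX$, and any $x\in A$, $(I+\nicefrac{t}{n}Q)^k f(x) = f(x)$, and likewise with $\lrate$ or $\urate$ in place of $Q$. The inductive step simply re-applies the pointwise identity to the current iterate $g_k\coloneqq (I+\nicefrac{t}{n}Q)^k f$; crucially, non-linearity of $\lrate$ and $\urate$ poses no difficulty, since we never distribute them over sums or scalars but only evaluate them pointwise at $x\in A$, where they vanish identically. Setting $k=n$ and passing to the limit $n\to+\infty$ via the above characterisations then yields $e^{Qt}f(x) = e^{\lrate t}f(x) = e^{\urate t}f(x) = f(x)$ for all $f\in\gamblesX$ and all $x\in A$.

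Finally, specialising to $f=g\upX$ for arbitrary $g\in\gamblesY{A}$ and unpacking the subspace-restriction definition $M\vert_A g = (M(g\upX))\vert_A$ gives $e^{Qt}\vert_A g = e^{\lrate t}\vert_A g = e^{\urate t}\vert_A g = g$, which is precisely the corollary. There is no substantive obstacle: Lemma~\ref{lemma:linear_approx_identity_on_A} already does all of the real work, and the only point of care is to extract its pointwise form before iterating, so that non-linearity of the lower and upper rate operators is sidestepped at each step.
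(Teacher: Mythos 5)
Your proposal is correct and takes essentially the same route as the paper, whose proof of this corollary is the one-liner ``use Lemma~\ref{lemma:linear_approx_identity_on_A} and the definitions of $e^{Qt}$, $e^{\lrate t}$, $e^{\urate t}$''. Your elaboration---extracting the pointwise identity $Qf(x)=\lrate f(x)=\urate f(x)=0$ for $x\in A$ from the lemma's proof, iterating it (which is indeed needed, since the iterates need not vanish on $A^c$ and nonlinearity is harmless there), and passing to the limit---is exactly the intended filling-in of that argument.
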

\begin{proof}
	Use Lemma~\ref{lemma:linear_approx_identity_on_A} and the definitions of $e^{Qt}$, $e^{\lrate t}$, $e^{\urate t}$.
\end{proof}

\begin{lemma}\label{lemma:restriction_norm_bound}
	Let $M$ and $N$ be operators on $\gamblesX$ such that $M\vert_A=I=N\vert_A$. Then $\norm{M\resAc - N\resAc} \leq \norm{M-N}$.
\end{lemma}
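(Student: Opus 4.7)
My plan is to unfold the definition of the restricted operators and then chain two elementary norm inequalities: the supremum of a function on $A^c$ is bounded by its supremum on $\states$, and the operator norm of $M-N$ controls the norm of images by the norm of the input.

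First I would fix an arbitrary $f \in \gamblesAc$ with $\norm{f} = 1$. By the definition of $(\cdot)\resAc$ together with pointwise subtraction,
\begin{equation*}
  (M\resAc - N\resAc)f = (M f\upX)\resAc - (N f\upX)\resAc = \bigl((M-N)f\upX\bigr)\resAc\,.
\end{equation*}
Restricting to $A^c$ can only omit values, so $\norm{((M-N)f\upX)\resAc} \leq \norm{(M-N)f\upX}$. Since both $M$ and $N$ are non-negatively homogeneous operators on $\gamblesX$, so is their difference, and the induced operator-norm inequality on $\gamblesX$ then gives $\norm{(M-N)f\upX} \leq \norm{M-N}\,\norm{f\upX}$. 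Extending by zero outside $A^c$ preserves the supremum norm, so $\norm{f\upX} = \norm{f} = 1$, and I would conclude $\norm{(M\resAc - N\resAc)f} \leq \norm{M-N}$. Taking the supremum over all such $f$ yields the claim.

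There is no substantial obstacle here, as each step is either a definition-chase or an application of the definition of the induced operator norm. I note that the hypothesis $M\vert_A = I = N\vert_A$ does not appear to play a role in the argument sketched above; I suspect it is retained in the statement because the lemma will typically be applied to operators such as $e^{Qt}$, $e^{\lrate t}$, and $e^{\urate t}$, which satisfy it by Corollary~\ref{cor:exponential_identity_on_A}, so it costs nothing to include the assumption in the statement.
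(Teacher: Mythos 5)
Your proof is correct and follows essentially the same route as the paper's: fix a unit-norm $f\in\gamblesAc$, rewrite $(M\resAc-N\resAc)f$ as $\bigl((M-N)f\upX\bigr)\resAc$, and bound this through $\norm{(M-N)f\upX}\leq\norm{M-N}$. Your side remark about the hypothesis is also accurate---the paper uses $M\vert_A=I=N\vert_A$ only to note that $Mf\upX$ and $Nf\upX$ vanish on $A$ and thereby upgrade your inequality $\norm{\bigl((M-N)f\upX\bigr)\resAc}\leq\norm{(M-N)f\upX}$ to an equality, which the lemma does not require---while the one small blemish on your side is the appeal to non-negative homogeneity of $M$ and $N$, which is neither assumed nor needed: since $\norm{f\upX}=1$, the bound $\norm{(M-N)f\upX}\leq\norm{M-N}$ follows directly from the definition of the induced norm as a supremum over unit-norm arguments.
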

\begin{proof}
	Fix any $f\in\gamblesAc$ with $\norm{f}=1$. Then $\norm{f\upX}=1$. Moreover, since $f\upX(x)=0$ for all $x\in A$ and since $M\vert_A=I=N\vert_A$, we have that $(Mf\upX)(x)=0=(Nf\upX)(x)$ for all $x\in A$. Hence we find
	\begin{align*}
		&\norm{(M\resAc - N\resAc)f} \\
		&\quad\quad= \norm{\bigl((M-N)f\upX\bigr)\resAc} \\
		&\quad\quad= \norm{(M-N)f\upX} \\
		&\quad\quad\leq \sup\bigl\{\norm{(M-N)g}\,:\,g\in\gamblesX,\norm{g}=1\bigr\} \\
		&\quad\quad= \norm{M-N}\,.
	\end{align*}
	The result follows since $f\in\gamblesAc$ is arbitrary.
\end{proof}

\begin{proof}[Proof of Proposition~\ref{prop:subsemigroup_precise}]
	Fix $Q\in\rateset$ and let $G$ be its subgenerator. First fix any $t\in\realsnonneg$ and any $\epsilon>0$. Then by definition of $e^{Qt}$, for all $n\in\nats$ large enough it holds that
	\begin{equation*}
		\norm{e^{Qt} - \bigl(I+\nicefrac{t}{n}Q\bigr)^n } < \epsilon\,.
	\end{equation*}
	Moreover, by Lemmas~\ref{lemma:restriction_distributes} and~\ref{lemma:linear_approx_identity_on_A} we have
	\begin{equation*}
		\Bigl(\bigl(I+\nicefrac{t}{n}Q\bigr)^n\Bigr)\resAc = \bigl(I+\nicefrac{t}{n}G\bigr)^n\,,
	\end{equation*}
	and, by Corollary~\ref{cor:exponential_identity_on_A}, that $e^{Qt}\vert_A=I$. Hence by Lemma~\ref{lemma:restriction_norm_bound} we find
	\begin{equation*}
		\norm{e^{Gt} - \bigl(I+\nicefrac{t}{n} G\bigr)^n} \leq \norm{e^{Qt} - \bigl(I+\nicefrac{t}{n}Q\bigr)^n}<\epsilon\,.
	\end{equation*}
	Since $\epsilon>0$ is arbitrary, we have
	\begin{equation*}
		e^{Gt} = \lim_{n\to+\infty} \bigl(I+\nicefrac{t}{n} G\bigr)^n\,.
	\end{equation*}
	This concludes the proof of the first claim. 
	
	To see that $(e^{Gt})_{t\in\realsnonneg}$ is a semigroup, note that $(e^{Qt})_{t\in\realsnonneg}$ is a semigroup, then apply Lemma~\ref{lemma:restriction_distributes} and Corollary~\ref{cor:exponential_identity_on_A}.
\end{proof}

\begin{proof}[Proof of Proposition~\ref{prop:subsemigroup_imprecise}]
	The proof is completely analogous to the proof of Proposition~\ref{prop:subsemigroup_precise}; simply replace $Q$ with either $\lrate$ or $\urate$ as appropriate.
\end{proof}

\begin{proof}[Proof of Proposition~\ref{prop:upper_subsemigroup_contractive}]
	Let $\epsilon\coloneqq \min_{x\in A^c}e^{\lrate t}\ind{A}(x)$; then $\epsilon >0$ due to Assumption~\ref{ass:reachable}.
	Fix any $f\in\gamblesAc$ with $\norm{f}=1$. By definition, we have $e^{\usubgen t}f=e^{\urate t}\resAc f = \bigl(e^{\urate t}f\upX\bigr)\resAc$. 
	
	Let $\mathcal{T}_t$ denote the set of transition matrices that dominates $e^{\lrate t}$.
	Due to Proposition~\ref{prop:duality_trans}, there is some $T\in\mathcal{T}_t$ such that $Tf\upX = e^{\urate t}f\upX$. Fix any $x\in A^c$. Then, using that \mbox{$f\upX(y)=0$} for all $y\in A$, together with the fact that $T$ is a transition matrix, we have
	\begin{align*}
		\abs{Tf\upX(x)} &= \abs{\sum_{y\in\states} T(x,y)f\upX(y)} 
		\leq \sum_{y\in A^c} T(x,y)\,, 
	\end{align*}
	and hence $\abs{Tf\upX(x)}\leq T\ind{A^c}(x)$. We have $\ind{A}+\ind{A^c}=\ones$ and $T\ones(x)=1$ since $T$ is a transition matrix. Using the linear character of $T$, we find that
	\begin{align*}
		T\ind{A^c}(x) = T(\ones - \ind{A})(x) = 1 - T\ind{A}(x)\,.
	\end{align*}
	Since $T\in\mathcal{T}_t$ and $x\in A^c$ we have
	\begin{equation*}
		0<\epsilon = \min_{y\in A^c} e^{\lrate t}\ind{A}(y) \leq e^{\lrate t}\ind{A}(x) \leq T\ind{A}(x)\,.
	\end{equation*}
	Combining the above we find that
	\begin{equation*}
		\abs{Tf\upX(x)} \leq T\ind{A^c}(x)= 1 - T\ind{A}(x) \leq 1 - \epsilon\,.
	\end{equation*}
	Since this is true for all $x\in A^c$, we find that $\norm{(Tf\upX)\resAc}\leq 1-\epsilon$.  Moreover, since $Tf\upX=e^{\urate t}f\upX$, it follows that $\norm{(e^{\urate t}f\upX)\resAc}\leq 1-\epsilon$, or in other words, that
	\begin{equation*}
		\norm{e^{\usubgen t}f}\leq 1-\epsilon\,,\quad\quad\text{with $\epsilon>0$.}
	\end{equation*}
	The result follows since $f\in\gamblesAc$ with $\norm{f}=1$ is arbitrary.
\end{proof}

\begin{proof}[Proof of Lemma~\ref{lemma:subgen_negative_eigen}]
	Let $\rho(e^{G})\coloneqq \max_{\lambda\in\sigma(e^{G})}\abs{\lambda}$ denote the spectral radius of $e^{G}$. We know from Section~\ref{subsec:subspace_dynamics} that $\norm{e^G}<1$, and hence we have $\rho(e^G)\leq \norm{e^G}<1$~\citep[Thm V.3.5]{taylor1958introduction}. This implies that $\abs{\lambda}<1$ for all $\lambda\in\sigma(e^G)$.
	
	By the spectral mapping theorem~\citep[Lemma I.3.13]{engelnagel2000semigroupslinearee} we then have $e^{\mathrm{Re}\,\lambda}<1$ for all $\lambda\in\sigma(G)$, or in other words, that $\mathrm{Re}\,\lambda<0$ for all $\lambda\in\sigma(G)$.
\end{proof}

\section{Proofs and Lemmas for Section~\ref{sec:quasicontractive}}

\begin{proof}[Proof of Proposition~\ref{prop:subsemigroup_ues}]
	This proof is a straightforward generalization of an argument in~\citep[Prop I.3.12]{engelnagel2000semigroupslinearee}.

	Let first $q\coloneqq \norm{e^{\usubgen}}$; then $0<q<1$ due to Proposition~\ref{prop:upper_subsemigroup_contractive}. Define
	\begin{equation*}
		m\coloneqq \sup_{s\in [0,1]} \norm{e^{\usubgen s}}\,.
	\end{equation*}
	Then $m\geq 1$ since $m\geq \norm{e^{\usubgen 0}}=\norm{I}=1$. Moreover, $m\leq 1$ due to Proposition~\ref{prop:upper_subsemigroup_contractive}, and hence $m=1$. Now set $M\coloneqq \nicefrac{1}{q}$ and $\xi\coloneqq -\log q$; then $\xi >0$ since $q<1$.
	
	Fix any $t\in\realsnonneg$. If $t=0$ then the result is trivial, so let us suppose that $t>0$. Then there are $k\in\natswith$ and $s\in[0,1)$ such that $t=k+s$. Using the semigroup property, we have
	\begin{align*}
		\norm{e^{\usubgen t}} = \norm{e^{\usubgen(s+k)}} &\leq \norm{e^{\usubgen s}}\norm{e^{\usubgen}}^k \leq mq^k = e^{k\log q}\,.
	\end{align*}
	We have $k=t-s$ and $s\in[0,1)$, and so
	\begin{align*}
		\norm{e^{\usubgen t}} &\leq e^{k\log q} \\
		&= e^{(t-s)\log q} \\
		&= e^{t\log q}e^{-s\log q} \\
		&= e^{-\xi t}e^{-s\log q} \\
		&\leq e^{-\xi t}e^{-\log q} = \frac{1}{q}e^{-\xi t}=M e^{-\xi t}\,,
	\end{align*}
	which concludes the proof.
\end{proof}

\begin{proof}[Proof of Proposition~\ref{prop:newnorm_is_norm}] 
	It follows from the definition that for any upper transition operator $\overline{T}$ and any non-negative $f\in\gamblesX$, also $\overline{T}f$ is non-negative. In the sequel, we will therefore say that upper transition operators \emph{preserve non-negativity}. Since $e^{\urate t}$ is an upper transition operator, this property clearly extends also to $e^{\usubgen t}$.
	
	Now fix $f,g\in\gamblesAc$ and $t\in\realsnonneg$.  By preservation of non-negativity we have for any $x\in A^c$ that
	\begin{equation*}
		\abs{e^{\usubgen t}\abs{f+g}}(x) = e^{\usubgen t}\abs{f+g}(x)\,.
	\end{equation*}
	Moreover, we clearly have $\abs{f+g}\leq \abs{f}+\abs{g}$, and so by the monotonicity of upper transition operators, we have
	\begin{equation*}
		e^{\usubgen t}\abs{f+g}(x) \leq e^{\usubgen t}(\abs{f}+\abs{g})(x)\,.
	\end{equation*}
	Finally, by the subadditivity of upper transition operators, we find that
	\begin{equation*}
		e^{\usubgen t}(\abs{f}+\abs{g})(x) \leq e^{\usubgen t}\abs{f}(x) + e^{\usubgen t}\abs{g}(x)\,.
	\end{equation*}
	Again by preservation of non-negativity we have
	\begin{equation*}
		e^{\usubgen t}\abs{f}(x) + e^{\usubgen t}\abs{g}(x) = \abs{e^{\usubgen t}\abs{f}(x) + e^{\usubgen t}\abs{g}}(x)\,.
	\end{equation*}
	Because this is true for all $x\in A^c$, we find that
	\begin{align*}
		\norm{e^{\usubgen t}\abs{f+g}} &\leq \norm{e^{\usubgen t}\abs{f} + e^{\usubgen t}\abs{g}} \\
		&\leq \norm{e^{\usubgen t}\abs{f}} + \norm{e^{\usubgen t}\abs{g}}\,.
	\end{align*}
	Multiplying both sides with $e^{\xi t}$ and noting that $t\in\realsnonneg$ is arbitrary, we find that
	\begin{align*}
		\norm{f+g}_* &= \sup_{t\in\realsnonneg} \norm{e^{\xi t}e^{\usubgen t}\abs{f+g}} \\
		&\leq \sup_{t\in\realsnonneg} \norm{e^{\xi t}e^{\usubgen t}\abs{f}}+\norm{e^{\xi t}e^{\usubgen t}\abs{g}} \\
		&\leq \sup_{t\in\realsnonneg} \norm{e^{\xi t}e^{\usubgen t}\abs{f}} + \sup_{t\in\realsnonneg} \norm{e^{\xi t}e^{\usubgen t}\abs{g}} \\
		&= \norm{f}_* + \norm{g}_*\,.
	\end{align*}
	Hence we have established that $\norm{\cdot}_*$ satisfies the triangle inequality.
	
	Next, fix any $f\in\gamblesAc$ and $c\in\reals$. Then
	\begin{align*}
		\norm{cf}_* &= \sup_{t\in\realsnonneg} \norm{e^{\xi t}e^{\usubgen t}\abs{cf}} \\
		&= \sup_{t\in\realsnonneg} \norm{e^{\xi t}e^{\usubgen t}\abs{c}\abs{f}} \\
		&= \abs{c}\sup_{t\in\realsnonneg} \norm{e^{\xi t}e^{\usubgen t}\abs{f}} = \abs{c}\norm{f}_*\,.
	\end{align*}
	So $\norm{\cdot}_*$ is absolutely homogeneous.
	
	Finally, fix $f\in\gamblesAc$ and suppose that $\norm{f}_*=0$. It holds that
	\begin{align*}
		0 = \norm{f}_* \geq \norm{e^{\xi 0}e^{\usubgen 0}\abs{f}} \geq 0\,,
	\end{align*}
	whence it holds that $\norm{e^{\xi 0}e^{\usubgen 0}\abs{f}}=0$. This implies that also $\norm{e^{\usubgen 0}\abs{f}}=0$. Since $e^{\usubgen 0}=I$, we have
	\begin{align*}
		0 = \norm{e^{\usubgen 0}\abs{f}} = \norm{\abs{f}}=\norm{f}\,,
	\end{align*}
	whence $f=0$. Hence $\norm{\cdot}_*$ separates $\gamblesAc$.
\end{proof}

\begin{lemma}\label{lemma:newnorm_precise_bounded_upper}
	For any $Q\in\rateset$ with subgenerator $G$, any $f\in\gamblesAc$, and any $t\geq0$, it holds that $\norm{e^{Gt}f}_*\leq \norm{e^{\usubgen t}f}_*$.
\end{lemma}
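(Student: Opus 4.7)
The plan is to reduce the target norm inequality to a pointwise comparison on $A^c$, namely $\abs{e^{Gt} f}(x) \leq e^{\usubgen t}\abs{f}(x)$ for $x \in A^c$, and then to propagate this bound through the definition of $\norm{\cdot}_*$ by using monotonicity and the semigroup property of the upper subgenerator semigroup. To establish the pointwise bound, I would chain two elementary observations. First, $e^{Gt}$ equals the restriction $e^{Qt}\resAc$ of the linear transition matrix $e^{Qt}$, which has non-negative entries; the standard triangle-inequality estimate for non-negative linear operators then yields $\abs{e^{Gt} f} \leq e^{Gt}\abs{f}$ on $A^c$. Second, applying the sandwich in Equation~\eqref{eq:semigroup_domination} to the non-negative function $\abs{f}\upX \in \gamblesX$ and restricting to $A^c$ gives $e^{Gt}\abs{f} \leq e^{\usubgen t}\abs{f}$. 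Chaining these yields the pointwise bound.

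Next, I would transport this inequality through $\norm{\cdot}_*$. For any fixed $s\geq 0$, monotonicity of the upper transition operator $e^{\usubgen s}$ gives $e^{\usubgen s}\abs{e^{Gt} f} \leq e^{\usubgen s}\bigl(e^{\usubgen t}\abs{f}\bigr)$, and the semigroup identity in Proposition~\ref{prop:subsemigroup_imprecise} rewrites the right-hand side as $e^{\usubgen(s+t)}\abs{f}$. Scaling by $e^{\xi s}$, taking the supremum norm, and then taking the supremum over $s\geq 0$ gives the preliminary estimate
\begin{equation*}
	\norm{e^{Gt} f}_* \leq \sup_{s\geq 0} \norm{e^{\xi s} e^{\usubgen(s+t)}\abs{f}}\,.
\end{equation*}
Since $e^{\usubgen t}\abs{f} \geq 0$ (upper transition operators preserve non-negativity), this quantity equals $\abs{e^{\usubgen t}\abs{f}}$, and combining with the semigroup identity $e^{\usubgen(s+t)}\abs{f} = e^{\usubgen s}\abs{e^{\usubgen t}\abs{f}}$ recognizes the right-hand side of the display above as $\norm{e^{\usubgen t}\abs{f}}_*$.

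The main obstacle is then the final step of identifying $\norm{e^{\usubgen t}\abs{f}}_*$ with the stated target $\norm{e^{\usubgen t} f}_*$. The subtlety is that the sublinear operator $e^{\usubgen t}$ does not commute with absolute values: one has only the pointwise bound $\abs{e^{\usubgen t} f} \leq e^{\usubgen t}\abs{f}$, so applying the previous strategy naively produces the inequality in the opposite direction. Closing this gap will likely require a more refined argument exploiting the specific structure of $\norm{\cdot}_*$---for instance, by writing $f = f_+ - f_-$ into non-negative parts, using the linearity of $e^{Gt}$ to handle each part separately (where the pointwise domination $\abs{e^{Gt} f_\pm} = e^{Gt} f_\pm \leq e^{\usubgen t} f_\pm = \abs{e^{\usubgen t} f_\pm}$ does hold), and then recombining via the sublinear inequalities satisfied by $e^{\usubgen t}$ and $e^{\usubgen s}$ inside the supremum defining $\norm{\cdot}_*$.
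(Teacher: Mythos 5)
Your reduction to the estimate $\norm{e^{Gt}f}_*\leq\norm{e^{\usubgen t}\abs{f}}_*$ is correct and is exactly how the paper begins: the pointwise bounds $\abs{e^{Gt}f}\leq e^{Gt}\abs{f}\leq e^{\usubgen t}\abs{f}$ on $A^c$, monotonicity of $e^{\usubgen s}$, preservation of non-negativity, and the semigroup property all go through as you describe. But the gap you flag at the end is genuine, and the repair you sketch does not close it. Splitting $f=f_+-f_-$ gets you nowhere: sublinearity of $e^{\usubgen t}$ only yields $\abs{e^{\usubgen t}f}\leq e^{\usubgen t}f_+ + e^{\usubgen t}f_-$, which is again the wrong direction, and there is no general lower bound of $\abs{e^{\usubgen t}f}$ by $e^{\usubgen t}\abs{f}$. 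Indeed, the only comparison available between the two operators applied to a signed $f$ is $e^{Gt}f\leq e^{\usubgen t}f$, which is compatible with $e^{Gt}f(x)$ being large and negative while $e^{\usubgen t}f(x)$ is small in absolute value; so the literal per-$f$ inequality should not be expected to follow from these ingredients.

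The missing idea is a reduction to \emph{non-negative} arguments via the induced operator norm, which is all that is needed downstream (Proposition~\ref{prop:precise_quasicontractive} only requires $\norm{e^{Gt}}_*\leq\norm{e^{\usubgen t}}_*\leq e^{-\xi t}$). Pick $f$ with $\norm{f}_*=1$ attaining $\norm{e^{Gt}}_*=\norm{e^{Gt}f}_*$, which exists since $\gamblesAc$ is finite-dimensional. The first half of your pointwise chain, namely $\abs{e^{Gt}f}\leq e^{Gt}\abs{f}$ pushed through $\norm{\cdot}_*$, gives $\norm{e^{Gt}f}_*\leq\norm{e^{Gt}\abs{f}}_*$; since $\norm{\abs{f}}_*=\norm{f}_*=1$, this forces $\norm{e^{Gt}}_*=\norm{e^{Gt}\abs{f}}_*$, i.e.\ the operator norm is attained at the non-negative function $\abs{f}$. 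For non-negative arguments the troublesome final step disappears: $e^{Gt}\abs{f}\leq e^{\usubgen t}\abs{f}$ with both sides non-negative, so monotonicity and the semigroup property give $\norm{e^{Gt}\abs{f}}_*\leq\norm{e^{\usubgen t}\abs{f}}_*\leq\norm{e^{\usubgen t}}_*$. This norm-attainment argument is precisely how the paper closes the proof, and it is the one step your proposal is missing.
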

\begin{proof}	
	Choose $f\in\gamblesAc$. Let $T$ be any matrix with non-negative entries. Then $\abs{Tf(x)}\leq \abs{T\abs{f}(x)}$ for all $x\in A^c$. In particular, we have
	\begin{align*}
		\abs{Tf(x)} &= \abs{\sum_{y\in A^c} T(x,y) f(y)} \\
		&\leq \sum_{y\in A^c} \abs{T(x,y)} \abs{f(y)} \\
		&= T\abs{f}(x) = \abs{T\abs{f}(x)}\,,
	\end{align*}
	where the final two equalities follow from the fact that $T$ only has non-negative entries. Since this is true for any matrix $T$ with non-negative entries, we have in particular that $\abs{e^{Gt}f}(x)\leq e^{Gt}\abs{f}(x)$. Similarly, it holds that
	\begin{align*}
		\abs{e^{\usubgen t}f}(x) &= \abs{\sup_{T\in\mathcal{T}_t} Tf(x) } \\
		&\leq \sup_{T\in\mathcal{T}_t} \abs{Tf(x) } \\
		&\leq\sup_{T\in\mathcal{T}_t} T\abs{f}(x) = e^{\usubgen t}\abs{f}(x)\,.
	\end{align*}
	
	It follows that, for any $s\in\realsnonneg$, we have
	\begin{align*}
		e^{\usubgen s}\abs{e^{Gt}f}(x) \leq e^{\usubgen s}e^{Gt}\abs{f}(x)\,.
	\end{align*}
	Due to preservation of non-negativity, and since this is true for any $x\in A^c$, we have
	\begin{equation*}
		\norm{e^{\usubgen s}\abs{e^{Gt}f}} \leq \norm{e^{\usubgen s}e^{Gt}\abs{f}}\,.
	\end{equation*}

	Now let $f\in\gamblesAc$ be such that $\norm{f}_*=1$ and $\norm{e^{Gt}}_*=\norm{e^{Gt}f}_*$; this $f$ clearly exists since $\gamblesAc$ is finite-dimensional. 
	Then we have
	\begin{align*}
		\norm{e^{Gt}}_* &= \norm{e^{Gt}f}_* \\
		&= \sup_{s\in\realsnonneg} \norm{e^{\xi s}e^{\usubgen s}\abs{e^{Gt}f}} \\
		&\leq \sup_{s\in\realsnonneg} \norm{e^{\xi s}e^{\usubgen s}e^{Gt}\abs{f}} = \norm{e^{Gt}\abs{f}}_* \leq \norm{e^{Gt}}_*\,,
	\end{align*}
	where the final inequality used that $\norm{\abs{f}}_*=\norm{f}_*=1$. Hence we have found that $\norm{e^{Gt}}_*=\norm{e^{Gt}\abs{f}}_*$.
	
	Since $e^{Qt}\in\mathcal{T}_t$ by Equation~\eqref{eq:semigroup_domination}, we also have
	\begin{equation*}
		e^{Gt}\abs{f} \leq e^{\usubgen t}\abs{f}\,.
	\end{equation*}
	By monotonicity of upper transition operators, this implies that
	\begin{equation*}
		e^{\usubgen s}e^{Gt}\abs{f} \leq e^{\usubgen s}e^{\usubgen t}\abs{f}
	\end{equation*}
	and, due to the preservation of non-negativity, we have
	\begin{equation*}
		e^{\usubgen s}e^{Gt}\abs{f}=\abs{e^{\usubgen s}e^{Gt}\abs{f}}\,,
	\end{equation*}
	and
	\begin{equation*}
		e^{\usubgen s}e^{\usubgen t}\abs{f}=\abs{e^{\usubgen s}e^{\usubgen t}\abs{f}}\,.
	\end{equation*}
	Hence for all $x\in A^c$ we have
	\begin{equation*}
		\abs{e^{\usubgen s}e^{Gt}\abs{f}}(x) \leq \abs{e^{\usubgen s}e^{\usubgen t}\abs{f}}(x)\,,
	\end{equation*}
	or in other words, that
	\begin{equation*}
		\norm{e^{\usubgen s}e^{Gt}\abs{f}} \leq \norm{e^{\usubgen s}e^{\usubgen t}\abs{f}}\,.
	\end{equation*}
	
	Since this holds for all $s\in\realsnonneg$, we have
	\begin{align*}
		\norm{e^{Gt}}_* = \norm{e^{Gt}\abs{f}}_* &= \sup_{s\in\realsnonneg} \norm{e^{\xi s}e^{\usubgen s}e^{Gt}\abs{f}} \\
		&\leq \sup_{s\in\realsnonneg} \norm{e^{\xi s}e^{\usubgen s}e^{\usubgen t}\abs{f}} \\
		&= \norm{e^{\usubgen t}\abs{f}}_* \leq \norm{e^{\usubgen t}}_*\,,
	\end{align*}
	which concludes the proof.
\end{proof}

\begin{proof}[Proof of Proposition~\ref{prop:renormed_quasicontractive}]
	The argument is analogous to the well-known case for linear quasicontractive semigroups; for a similar result, see e.g.~\citep[Thm 12.21]{renardyrogers2004intropde}. 
	So, fix any $t\in\realsnonneg$ and $f\in\gamblesAc$.
	
	Using a similar argument as used in the proof of Lemma~\ref{lemma:newnorm_precise_bounded_upper}, we use the preservation of non-negativity and the monotonicity of upper transition operators, to find for any $s\in\realsnonneg$ that
	\begin{equation*}
		\norm{e^{\xi s}e^{\usubgen s}\abs{e^{\usubgen t}f}} \leq \norm{e^{\xi s}e^{\usubgen s}e^{\usubgen t}\abs{f}}\,.
	\end{equation*}
	Hence we have
	\begin{align*}
		\norm{e^{\usubgen t}f}_* &= \sup_{s\in\realsnonneg}\norm{e^{\xi s}e^{\usubgen s}\abs{e^{\usubgen t}f}} \\
		&\leq \sup_{s\in\realsnonneg}\norm{e^{\xi s}e^{\usubgen s}e^{\usubgen t}\abs{f}} \\
		&= e^{-\xi t}e^{\xi t}\sup_{s\in\realsnonneg}\norm{e^{\xi s}e^{\usubgen (s+t)}\abs{f}} \\
		&= e^{-\xi t}\sup_{s\in\realsnonneg}\norm{e^{\xi (s+t)}e^{\usubgen (s+t)}\abs{f}} \\
		&= e^{-\xi t}\sup_{s\in\reals_{\geq t}}\norm{e^{\xi (s)}e^{\usubgen s}\abs{f}} \leq e^{-\xi t} \norm{f}_*\,,
	\end{align*}
	where for the second equality we used the semigroup property. Since $f\in\gamblesAc$ is arbitrary, this implies that
	\begin{equation*}
		\norm{e^{\usubgen t}}_* = \sup\bigl\{ \norm{e^{\usubgen t}f}_*\,:\,f\in\gamblesAc, \norm{f}_*=1\bigr\} \leq e^{-\xi t}\,,
	\end{equation*}
	which completes the proof.
\end{proof}

\begin{proof}[Proof of Proposition~\ref{prop:precise_quasicontractive}]
	This is immediate from Lemma~\ref{lemma:newnorm_precise_bounded_upper} and Proposition~\ref{prop:renormed_quasicontractive}.
\end{proof}

\section{Proofs and Lemmas for Section~\ref{sec:hits_as_limits}}

The following result is well-known, but we state it here for convenience:
\begin{lemma}\label{lemma:geometric_inverse_bound}
	Let $T$ be a linear bounded operator on a Banach space with norm $\norm{\cdot}_*$. Suppose that $\norm{T}_*<1$ and that $(I-T)^{-1}$ exists. Then
	\begin{equation*}
		\norm{(I-T)^{-1}}_* \leq \frac{1}{1-\norm{T}_*}\,.
	\end{equation*}
\end{lemma}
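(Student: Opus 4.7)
The plan is to use the standard short algebraic argument that avoids invoking convergence of the Neumann series explicitly, since the hypothesis already grants existence of $(I-T)^{-1}$. The key identity is $(I-T)(I-T)^{-1} = I$, which I would rearrange as $(I-T)^{-1} = I + T(I-T)^{-1}$.

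From this identity I would take $\norm{\cdot}_*$ of both sides, apply the triangle inequality, and use the submultiplicativity of the induced operator norm together with $\norm{I}_* = 1$, obtaining
\begin{equation*}
\norm{(I-T)^{-1}}_* \leq 1 + \norm{T}_* \cdot \norm{(I-T)^{-1}}_*.
\end{equation*}
Since $(I-T)^{-1}$ is assumed to exist and $T$ is bounded, the norm $\norm{(I-T)^{-1}}_*$ is a finite non-negative real number, so I can subtract $\norm{T}_*\norm{(I-T)^{-1}}_*$ from both sides to get $(1-\norm{T}_*)\norm{(I-T)^{-1}}_* \leq 1$. Dividing through by $1-\norm{T}_*$, which is strictly positive by hypothesis, yields the claimed bound.

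There is no real obstacle here; the only thing to be careful about is justifying the final division, which requires both $\norm{T}_* < 1$ (given) and finiteness of $\norm{(I-T)^{-1}}_*$ (which follows from boundedness of the inverse operator on a Banach space, again a standard consequence of $\norm{T}_* < 1$ via, e.g., the bounded inverse theorem, but in fact we are simply told that the inverse exists and we are working in a normed setting where the induced operator norm of any bounded operator is finite). As an alternative I could instead invoke the Neumann series $(I-T)^{-1} = \sum_{k=0}^{+\infty} T^k$ and bound $\norm{(I-T)^{-1}}_* \leq \sum_{k=0}^{+\infty}\norm{T}_*^k = (1-\norm{T}_*)^{-1}$ directly, but the algebraic approach above is cleaner and self-contained.
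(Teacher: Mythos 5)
Your argument is correct, but it takes a different route from the paper. The paper simply invokes the Neumann series: since $\norm{T}_*<1$, it writes $(I-T)^{-1}=\sum_{k=0}^{+\infty}T^k$ and bounds the norm by the geometric series $\sum_{k=0}^{+\infty}\norm{T}_*^k=(1-\norm{T}_*)^{-1}$ --- exactly the ``alternative'' you mention at the end. Your primary argument instead uses the algebraic identity $(I-T)^{-1}=I+T(I-T)^{-1}$, the triangle inequality, and submultiplicativity, then subtracts and divides. This is perfectly valid, with the one caveat you yourself flag: to subtract $\norm{T}_*\norm{(I-T)^{-1}}_*$ you need $\norm{(I-T)^{-1}}_*$ to be finite, i.e.\ the inverse to be a \emph{bounded} operator, which on a general Banach space requires either the bounded inverse theorem or (somewhat circularly) the Neumann series itself; the paper's route gets boundedness for free as a by-product of the convergent series, which is why it is the more self-contained choice under the stated hypotheses. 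In the paper's actual application the space is the finite-dimensional $\gamblesAc$, so the distinction is immaterial there, and both proofs establish the lemma.
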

\begin{proof}
	Since $\norm{T}_*<1$ we have $(I-T)^{-1}=\sum_{k=0}^{+\infty}T^k$. Taking norms,
	\begin{equation*}
		\norm{(I-T)^{-1}}_* = \norm{\sum_{k=0}^{+\infty}T^k}_* \leq \sum_{k=0}^{+\infty}\norm{T}_*^k = \frac{1}{1-\norm{T}_*}\,,
	\end{equation*}
	where the final step used the value of the geometric series and that $\norm{T}_*<1$.
\end{proof}

\begin{lemma}\label{lemma:resolvent_uniform_bound}
	There is some $C> 0$ such that for any $\Delta>0$ with $\Delta\xi<1$, and any $Q\in\rateset$ with subgenerator $G$, it holds that $\norm{(I-e^{G\Delta})^{-1}} < \nicefrac{C}{\Delta}$.
\end{lemma}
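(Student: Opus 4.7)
The plan is to work in the alternative norm $\norm{\cdot}_*$ from Section~\ref{sec:quasicontractive}, where Proposition~\ref{prop:precise_quasicontractive} gives the uniform decay estimate $\norm{e^{Gt}}_*\leq e^{-\xi t}$ valid for \emph{every} $Q\in\rateset$ with the same $\xi>0$. This uniformity is exactly what we need to get a bound on the resolvent that does not depend on $Q$.

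First, I would use Proposition~\ref{prop:resolvent_existence} to note that $(I-e^{G\Delta})^{-1}$ exists. Since $\norm{e^{G\Delta}}_*\leq e^{-\xi\Delta}<1$, the geometric-series bound in Lemma~\ref{lemma:geometric_inverse_bound} applies in the $\norm{\cdot}_*$-norm, yielding
\begin{equation*}
    \norm{(I-e^{G\Delta})^{-1}}_* \leq \frac{1}{1-e^{-\xi\Delta}}\,.
\end{equation*}

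Second, I would handle the right-hand side by an elementary lower bound on $1-e^{-x}$ for $x\in[0,1]$. A quick Taylor/convexity argument (checking that $g(x)\coloneqq 1-e^{-x}-x/2$ satisfies $g(0)=0$ and $g(1)>0$, with $g'$ changing sign only once in $[0,1]$) gives $1-e^{-x}\geq x/2$ on $[0,1]$. Applied to $x=\xi\Delta\in[0,1)$, this yields $\norm{(I-e^{G\Delta})^{-1}}_* \leq 2/(\xi\Delta)$.

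Finally, I would transfer the bound from $\norm{\cdot}_*$ to the supremum norm. Because $\gamblesAc$ is finite-dimensional, the two norms are equivalent: there exist constants $c_1,c_2>0$ (depending only on $\norm{\cdot}_*$, not on $Q$ or $\Delta$) with $c_1\norm{f}\leq\norm{f}_*\leq c_2\norm{f}$ for all $f\in\gamblesAc$. A standard computation then gives $\norm{M}\leq (c_2/c_1)\norm{M}_*$ for any operator $M$ on $\gamblesAc$, so
\begin{equation*}
    \norm{(I-e^{G\Delta})^{-1}} \leq \frac{2c_2}{c_1\xi}\cdot\frac{1}{\Delta}\,.
\end{equation*}
Setting $C\coloneqq 2c_2/(c_1\xi)+1$ (adding $1$ just to obtain a strict inequality) completes the proof.

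There is no real obstacle here; the one point worth emphasizing is that the constant $C$ must not depend on $Q$. This is guaranteed because Proposition~\ref{prop:precise_quasicontractive} provides a single $\xi$ that works uniformly across all $Q\in\rateset$, and the norm-equivalence constants $c_1,c_2$ depend only on the fixed space $\gamblesAc$ and the fixed norm $\norm{\cdot}_*$, neither of which depends on $Q$ or $\Delta$.
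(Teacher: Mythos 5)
Your proposal is correct and follows essentially the same route as the paper's own proof: existence of the inverse from Proposition~\ref{prop:resolvent_existence}, the geometric-series bound of Lemma~\ref{lemma:geometric_inverse_bound} applied in the $\norm{\cdot}_*$-norm using the uniform quasicontractive estimate of Proposition~\ref{prop:precise_quasicontractive}, an elementary estimate $1-e^{-\xi\Delta}>\xi\Delta/2$ for $\xi\Delta<1$ (the paper uses the equivalent quadratic bound on $e^{-\xi\Delta}$), and finally norm equivalence on the finite-dimensional space $\gamblesAc$ to return to the supremum norm. Your explicit two-sided norm-equivalence step for transferring the \emph{operator} norm is, if anything, slightly more careful than the paper's single-constant shortcut.
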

\begin{proof}
	Let $\xi>0$ be as in Proposition~\ref{prop:subsemigroup_ues}, and let $\norm{\cdot}_*$ be the norm from Equation~\eqref{eq:alternative_norm}. Since $\gamblesAc$ is finite-dimensional the norms $\norm{\cdot}$ and $\norm{\cdot}_*$ are equivalent, and hence there is some $c>0$ such that $\norm{f}\leq c\norm{f}_*$ for all $f\in\gamblesAc$. Set $C\coloneqq \nicefrac{2c}{\xi}$; then $C>0$ since $\xi>0$.
	
	Fix any $\Delta>0$ such that $\Delta\xi<1$, and any $Q\in\rateset$ with subgenerator $G$. It follows from Proposition~\ref{prop:precise_quasicontractive} that $\norm{e^{G\Delta}}_* \leq e^{-\xi\Delta}$. Using a standard quadratic bound on the negative scalar exponential, we have
	\begin{equation}\label{eq:lemma:resolvent_bound:exponential_bound}
		\norm{e^{G\Delta}}_* \leq e^{-\xi \Delta} \leq 1 - \xi\Delta + \frac{1}{2}\Delta^2\xi^2 < 1-\frac{\Delta\xi}{2} < 1\,,
	\end{equation}
	where the third inequality used that $\Delta\xi<1$.
	Notice that $\norm{e^{G\Delta}}_*\leq e^{-\xi \Delta}<1$. Moreover, $(I-e^{G\Delta})^{-1}$ exists by Proposition~\ref{prop:resolvent_existence}. By the norm equivalence, we have
	\begin{equation}\label{eq:lemma:resolvent_bound:norm_equiv}
		\norm{(I-e^{G\Delta})^{-1}} \leq c\norm{(I-e^{G\Delta})^{-1}}_*\,,
	\end{equation}	
	and, by Lemma~\ref{lemma:geometric_inverse_bound}, that
	\begin{equation*}
		\norm{(I-e^{G\Delta})^{-1}}_* \leq \frac{1}{1 - \norm{e^{G\Delta}}_*}\,.
	\end{equation*}
	Using Equation~\eqref{eq:lemma:resolvent_bound:exponential_bound} we obtain
	\begin{equation*}
		\norm{(I-e^{G\Delta})^{-1}}_* \leq \frac{1}{1 - \norm{e^{G\Delta}}_*} < \frac{1}{1 - 1 + \frac{\Delta\xi}{2}} = \frac{1}{\Delta}\frac{2}{\xi}\,.
	\end{equation*}
	Combining with Equation~\eqref{eq:lemma:resolvent_bound:norm_equiv} yields
	\begin{equation*}
		\norm{(I-e^{G\Delta})^{-1}} < c\frac{1}{\Delta}\frac{2}{\xi} = \frac{C}{\Delta} \,,
	\end{equation*}
	which concludes the proof.
\end{proof}

\begin{proof}[Proof of Proposition~\ref{prop:precise_uniform_limit}]
	Let $\xi,C>0$ be as in Lemma~\ref{lemma:resolvent_uniform_bound}, and let $\delta\coloneqq\nicefrac{1}{\xi}$ and $L \coloneqq C\norm{\rateset}^2$	with $\norm{\rateset}\coloneqq \sup_{Q\in\rateset}\norm{Q}$;  note that $\norm{\rateset}\in\realsnonneg$ since $\rateset$ is bounded by assumption. Observe that we must have $\norm{\rateset}>0$ due to Assumption~\ref{ass:reachable}, whence $L>0$.
	
	Choose any $\Delta\in(0,\delta)$ and $Q\in\rateset$.
	It is immediate from the definitions that $h^Q(x)=0=h^Q_\Delta(x)$ for all $x\in A$ and all $Q\in\rateset$, so it remains to bound the norm on $A^c$. 
	
	Let $G$ be the subgenerator of $Q$ on $A^c$. By Proposition~\ref{prop:discrete_precise_by_inverse} we have that $h^Q_\Delta\vert_{A^c} = (I - e^{G\Delta})^{-1}\Delta\ones$. Using the definition of $h^Q$ this implies that
	\begin{align*}
		h^Q_\Delta\vert_{A^c} - e^{G\Delta}h^Q_\Delta\vert_{A^c} = \Delta\ones = -\Delta Gh^Q\resAc\,.
	\end{align*}
	Re-ordering terms we have
	\begin{equation*}
		h^Q_\Delta\vert_{A^c} = e^{G\Delta}h^Q_\Delta\vert_{A^c} -\Delta Gh^Q\vert_{A^c}\,.
	\end{equation*}
	Let $B = e^{G\Delta} - (I+\Delta G)$.
	We find that
	\begin{align*}
		h^Q_\Delta&\vert_{A^c} - h^Q\vert_{A^c} \\
		&= e^{G\Delta}h^Q_\Delta\vert_{A^c} -\Delta Gh^Q\vert_{A^c} - h^Q\vert_{A^c} \\
		&= e^{G\Delta}h^Q_\Delta\vert_{A^c} - (I+\Delta G)h^Q\vert_{A^c} \\
		&= e^{G\Delta}(h^Q_\Delta\vert_{A^c} - h^Q\vert_{A^c}) + \bigl(e^{G\Delta} - (I+\Delta G)\bigr)h^Q\vert_{A^c} \\
		&= e^{G\Delta}(h^Q_\Delta\vert_{A^c} - h^Q\vert_{A^c}) + Bh^Q\vert_{A^c}\,.
	\end{align*}
	We see that the difference on the left-hand side occurs again on the right-hand side. Hence we can substitute the same expansion $n\in\nats$ times to get
	\begin{align*}
		&h^Q_\Delta\vert_{A^c} - h^Q\vert_{A^c} \\
		&\quad= e^{G\Delta (n+1)}(h^Q_\Delta\vert_{A^c} - h^Q\vert_{A^c}) + \sum_{k=0}^n e^{G\Delta k} Bh^Q\vert_{A^c}\,.
	\end{align*}
	Since we know from Section~\ref{subsec:subspace_dynamics} that $\lim_{t\to+\infty} e^{Gt}=0$, we see that the left summand vanishes as we take \mbox{$n\to+\infty$} and, using Proposition~\ref{prop:resolvent_existence}, we have $(I-e^{Q\Delta})^{-1}=\sum_{k=0}^{+\infty} e^{G\Delta k}$. So, passing to this limit and taking norms, we find
	\begin{align*}
		\norm{h^Q_\Delta\vert_{A^c} - h^Q\vert_{A^c}} &= \norm{(I-e^{G\Delta})^{-1}Bh^Q\vert_{A^c}} \\
		&\leq \norm{(I-e^{G\Delta})^{-1}}\norm{B}\norm{h^Q\vert_{A^c}}\,.
	\end{align*}
	Using Lemmas~\ref{lemma:linear_approx_identity_on_A} and \ref{lemma:restriction_norm_bound} and Corollary~\ref{cor:exponential_identity_on_A}, we have
	\begin{equation*}
		\norm{B} = \norm{e^{G\Delta}-(I+\Delta G)} \leq \norm{e^{Q\Delta}-(I+\Delta Q)}\,,
	\end{equation*}
	and so, due to~\citep[Lemma B.8]{krak2021phd}, we have $\norm{B}\leq \Delta^2\norm{Q}^2$. Since $Q\in\rateset$ it follows that $\norm{Q}\leq\norm{\rateset}$, and so $\norm{B}\leq \Delta^2\norm{\rateset}^2$. Since $\Delta<\delta$ we have $\Delta\xi<1$, whence $\norm{(I-e^{G\Delta})^{-1}}<\nicefrac{C}{\Delta}$ due to Lemma~\ref{lemma:resolvent_uniform_bound}. In summary we find
	\begin{align*}
		\norm{h^Q_\Delta\vert_{A^c} - h^Q\vert_{A^c}} &<\frac{C}{\Delta}\Delta^2\norm{\rateset}^2\norm{h^Q\vert_{A^c}} = \Delta L\norm{h^Q}\,,
	\end{align*}
	which concludes the proof.
\end{proof}

\begin{proposition}{\citep[Prop 7]{krak2020computing}}\label{prop:discrete_lower_by_limit_of_transmat}
	Fix any $\Delta>0$, and let $\mathcal{T}_\Delta$ denote the set of transition matrices that dominate $e^{\lrate\Delta}$. Choose any $T_0\in\mathcal{T}_\Delta$. For all $n\in\natswith$, let $h_n$ be the (unique) non-negative solution to $h_n=\Delta\ind{A^c} + \ind{A^c}T_nh_n$, and let $T_{n+1}\in\mathcal{T}_\Delta$ be such that $T_{n+1}h_n=e^{\lrate\Delta}h_n$.
	
	
	Then $\lim_{n\to+\infty}h_n=\underline{h}_\Delta$.
\end{proposition}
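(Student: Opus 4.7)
The overall plan is a policy-iteration argument: show that $(h_n)$ is monotonically non-increasing and bounded below by $\underline{h}_\Delta$, identify its pointwise limit $h_\infty$ as a non-negative fixed point of the non-linear operator $F(h)\coloneqq\Delta\ind{A^c}+\ind{A^c}e^{\lrate\Delta}h$, and then conclude $h_\infty=\underline{h}_\Delta$ by a uniqueness argument for $F$.

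First I would verify well-definedness of $h_n$. Since $T_n\in\mathcal{T}_\Delta$ dominates $e^{\lrate\Delta}$, we have $T_n\ind{A}(x)\geq e^{\lrate\Delta}\ind{A}(x)\geq\epsilon$ for all $x\in A^c$, with $\epsilon\coloneqq\min_{x\in A^c}e^{\lrate\Delta}\ind{A}(x)>0$ by Assumption~\ref{ass:reachable}. Because $T_n\ones=\ones$, this gives $\norm{T_n\resAc}\leq 1-\epsilon<1$, so $(I-T_n\resAc)$ is invertible and $h_n\resAc=\Delta(I-T_n\resAc)^{-1}\ones$ is the unique non-negative solution (extended by $0$ on $A$). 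For monotonicity, the greedy choice yields $T_{n+1}h_n=e^{\lrate\Delta}h_n\leq T_nh_n$, whence $h_n\geq\Delta\ind{A^c}+\ind{A^c}T_{n+1}h_n$; iterating the monotone contraction $h\mapsto\Delta\ind{A^c}+\ind{A^c}T_{n+1}h$ from $h_n$ produces a non-increasing sequence converging to its unique fixed point $h_{n+1}$, so $h_n\geq h_{n+1}$. Analogously, $T_n$ dominating $e^{\lrate\Delta}$ gives $h_n\geq F(h_n)$; iterating the monotone $F$ from $h_n$ and invoking minimality of $\underline{h}_\Delta$ yields $h_n\geq\underline{h}_\Delta$.

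With the limit $h_\infty\coloneqq\lim_n h_n$ thus existing and bounded below by $\underline{h}_\Delta$, I would next show $h_\infty$ is itself a fixed point of $F$. Writing
\begin{equation*}
T_{n+1}h_{n+1} = e^{\lrate\Delta}h_n + T_{n+1}(h_{n+1}-h_n),
\end{equation*}
the second summand vanishes in norm since $\norm{T_{n+1}}=1$ and $\norm{h_{n+1}-h_n}\to 0$, while the first converges to $e^{\lrate\Delta}h_\infty$ by Lipschitz continuity of $e^{\lrate\Delta}$. Passing to the limit in $h_{n+1}=\Delta\ind{A^c}+\ind{A^c}T_{n+1}h_{n+1}$ then yields $h_\infty=F(h_\infty)$.

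The main obstacle is identifying $h_\infty$ with $\underline{h}_\Delta$ rather than merely $h_\infty\geq\underline{h}_\Delta$, since a priori $F$ could have multiple non-negative fixed points. The key ingredient is the Lipschitz-type bound $\abs{e^{\lrate\Delta}h-e^{\lrate\Delta}h'}\leq e^{\urate\Delta}\abs{h-h'}$, derived from the super-additivity, monotonicity, and conjugacy of lower transition operators. Applied to any two bounded non-negative fixed points $h,h'$ of $F$ (both vanishing on $A$, so $\abs{h-h'}$ is supported on $A^c$), this yields
\begin{equation*}
\norm{(h-h')\resAc}\leq\norm{e^{\usubgen\Delta}}\,\norm{(h-h')\resAc}.
\end{equation*}
Since $\norm{e^{\usubgen\Delta}}<1$ by Proposition~\ref{prop:upper_subsemigroup_contractive}, this forces $h=h'$. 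Hence the bounded non-negative fixed point of $F$ is unique, and since $\underline{h}_\Delta$ is one such (its boundedness follows from being dominated by any $h^{T}$ with $T\in\mathcal{T}_\Delta$, which is finite by the same contractivity argument used in the first step), we conclude $h_\infty=\underline{h}_\Delta$.
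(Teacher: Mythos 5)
Your proof is correct, but it is worth noting that the paper does not actually prove this proposition internally: its ``proof'' consists of citing \citep[Prop 7]{krak2020computing} and remarking that the extreme-point precondition imposed there is superfluous. You have instead reconstructed a complete, self-contained policy-iteration argument, and every step checks out: the bound $\norm{T_n\resAc}\leq 1-\epsilon$ from Assumption~\ref{ass:reachable} and $T_n\ones=\ones$ gives well-posedness of each $h_n$; the greedy choice $T_{n+1}h_n=e^{\lrate\Delta}h_n\leq T_nh_n$ combined with monotone iteration of the contraction $h\mapsto\Delta\ind{A^c}+\ind{A^c}T_{n+1}h$ gives $h_{n+1}\leq h_n$; the decomposition $T_{n+1}h_{n+1}=e^{\lrate\Delta}h_n+T_{n+1}(h_{n+1}-h_n)$ correctly identifies the limit as a fixed point of $F$; and the uniqueness step via $\abs{e^{\lrate\Delta}h-e^{\lrate\Delta}h'}\leq e^{\urate\Delta}\abs{h-h'}$ (a standard consequence of super-additivity and conjugacy) together with $\norm{\smash{e^{\usubgen\Delta}}}<1$ from Proposition~\ref{prop:upper_subsemigroup_contractive} closes the gap between $h_\infty\geq\underline{h}_\Delta$ and $h_\infty=\underline{h}_\Delta$. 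What your route buys is independence from the external reference and, as a by-product, the observation that under Assumption~\ref{ass:reachable} the system $h=\Delta\ind{A^c}+\ind{A^c}e^{\lrate\Delta}h$ has a \emph{unique} bounded non-negative solution, so the word ``minimal'' is not doing any work here; what the paper's citation buys is brevity and a pointer to the algorithmic context in which the iteration was originally introduced. The only cosmetic caveat is that your final parenthetical (boundedness of $\underline{h}_\Delta$ via domination by some $h_T$) implicitly reuses your step-3 iteration argument, or alternatively Lemma~\ref{lemma:discrete_infimum_bound}; spelling that out would make the write-up airtight.
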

\begin{proof}
	The preconditions of the reference actually require every $T_n$ to be an extreme point of $\mathcal{T}_{\Delta}$, but inspection of the proof of~\citep[Prop 7]{krak2020computing} shows that this is not required; the superfluous condition is only used to streamline the statement of an algorithmic result further on in that work.
\end{proof}

We next need some results that involve transition matrices ${}^{P}T_t^s$ corresponding to (not-necessarily homogeneous) Markov chains $P\in\mathbb{P}^{\mathrm{M}}_\rateset$. We recall from Section~\ref{subsec:precise_dynamics} that these are defined for any $t,s\in\realsnonneg$ with $t\leq s$ as
\begin{equation*}
	{}^{P}T_t^s(x,y) \coloneqq P(X_s=y\,\vert\,X_t=x)\quad\text{for all $x,y\in\states$.}
\end{equation*}

\begin{lemma}\label{lemma:cosequence_markov_transmat}
	Consider the sequence $(h_n)_{n\in\natswith}$ constructed as in Proposition~\ref{prop:discrete_lower_by_limit_of_transmat}. For any $n\in\natswith$, there is a Markov chain $P_{n+1}\in\mathcal{P}_\rateset^\mathrm{M}$ with corresponding transition matrix ${}^{(n+1)}T_0^\Delta$ such that ${}^{(n+1)}T_0^{\Delta}h_n = e^{\lrate \Delta}h_n$. 
	
	Hence in particular, we can choose the co-sequence $(T_n)_{n\in\nats}$ in Proposition~\ref{prop:discrete_lower_by_limit_of_transmat} to be $({}^{(n)}T_0^{\Delta})_{n\in\nats}$. 
\end{lemma}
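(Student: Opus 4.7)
The plan is to construct $P_{n+1}$ by discretizing $[0,\Delta]$ into $m$ equal pieces, using piecewise-constant rate matrices chosen via Proposition~\ref{prop:duality_rate} to achieve the lower bound at each stage, and then extracting a limit as $m \to \infty$. First, note that the final claim of the lemma follows almost immediately from the first: any $P \in \mathcal{P}^\mathrm{M}_\rateset$ satisfies ${}^{P}T_0^\Delta f = \mathbb{E}_P[f(X_\Delta)\vert X_0] \geq \underline{\mathbb{E}}^\mathrm{M}_\rateset[f(X_\Delta)\vert X_0] = e^{\lrate\Delta}f$ for every $f \in \gamblesX$, so ${}^{P}T_0^\Delta$ dominates $e^{\lrate\Delta}$ and thus lies in $\mathcal{T}_\Delta$, making ${}^{(n+1)}T_0^\Delta$ a valid choice for the co-sequence in Proposition~\ref{prop:discrete_lower_by_limit_of_transmat}.

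For the existence of $P_{n+1}$, fix $n$ and write $f \coloneqq h_n$. For each $m \in \nats$, iteratively define $f_0^{(m)} \coloneqq f$ and $f_{k+1}^{(m)} \coloneqq (I + \tfrac{\Delta}{m}\lrate)f_k^{(m)}$, so that $f_m^{(m)} \to e^{\lrate\Delta}f$ by Equation~\eqref{eq:lower_rate_limit}. By Proposition~\ref{prop:duality_rate}, select $Q_k^{(m)} \in \rateset$ with $Q_k^{(m)} f_k^{(m)} = \lrate f_k^{(m)}$, which yields $(I + \tfrac{\Delta}{m}Q_k^{(m)})f_k^{(m)} = f_{k+1}^{(m)}$. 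Let $P^{(m)} \in \mathcal{P}^\mathrm{M}_\rateset$ be the non-homogeneous Markov chain whose rate matrix on $[k\Delta/m,(k+1)\Delta/m)$ is $Q_{m-1-k}^{(m)}$, with the index reversal arranged so that the innermost factor acting on $f$ is $Q_0^{(m)}$; its $\Delta$-step transition matrix is then ${}^{P^{(m)}}T_0^\Delta = e^{Q_0^{(m)}\Delta/m} e^{Q_1^{(m)}\Delta/m} \cdots e^{Q_{m-1}^{(m)}\Delta/m}$.

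The key estimate is to show that ${}^{P^{(m)}}T_0^\Delta f \to e^{\lrate\Delta}f$ as $m\to\infty$. Using the quadratic bound $\norm{e^{Q\Delta/m} - (I + \tfrac{\Delta}{m}Q)} \leq (\Delta/m)^2\norm{Q}^2$ from \citep[Lemma B.8]{krak2021phd}, together with the uniform bound $\norm{Q_k^{(m)}} \leq \sup_{Q\in\rateset}\norm{Q}$ (finite since $\rateset$ is bounded), a telescoping estimate comparing $\prod_k e^{Q_k^{(m)}\Delta/m}$ against $\prod_k (I + \tfrac{\Delta}{m}Q_k^{(m)})$ applied to $f$ gives an error of order $O(1/m)$. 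Since by construction $\prod_{k} (I + \tfrac{\Delta}{m}Q_k^{(m)})f = f_m^{(m)} \to e^{\lrate\Delta}f$, the claimed convergence follows.

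To promote this convergence into exact equality for a single chain, I would pass to a limit. The sequence $({}^{P^{(m)}}T_0^\Delta)$ lies in the compact set $\mathcal{T}_\Delta$ (closed by Proposition~\ref{prop:duality_trans} and bounded since transition matrices have entries in $[0,1]$), so a subsequence converges to some $T^* \in \mathcal{T}_\Delta$ with $T^* f = e^{\lrate\Delta}f$. Using that $\mathcal{P}^\mathrm{M}_\rateset$ is closed under the topology of finite-dimensional marginals, as detailed in \citep[Sec 4.6]{krak2021phd}, and that $P \mapsto {}^{P}T_0^\Delta$ is continuous in that topology, extract a further subsequential limit $P_{n+1} \in \mathcal{P}^\mathrm{M}_\rateset$ whose $\Delta$-step transition matrix equals $T^*$. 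The main obstacle is exactly this last step: it requires the appropriate compactness and closedness properties of $\mathcal{P}^\mathrm{M}_\rateset$ and continuity of the transition-matrix map, which rely on the technical characterization of $\mathcal{P}^\mathrm{M}_\rateset$ from the reference rather than on anything available in the main text.
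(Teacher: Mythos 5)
Your proposal is correct in substance but takes a genuinely different route from the paper. The paper's own proof is a one-line citation of~\citep[Cor 6.24]{krak2021phd}, which directly asserts that for every $f\in\gamblesX$ there is a Markov chain in $\mathcal{P}^\mathrm{M}_\rateset$ whose $\Delta$-step transition matrix attains $e^{\lrate\Delta}f$; you instead essentially \emph{reprove} that corollary by an explicit construction: piecewise-constant rates chosen greedily along the Euler iteration for $e^{\lrate\Delta}f$, a telescoping comparison of the product of exponentials against the product of linear factors, and a compactness argument to pass to a single limiting chain. The construction is sound, and your handling of the second claim of the lemma (domination of $e^{\lrate\Delta}$ by every ${}^PT_0^\Delta$ with $P\in\mathcal{P}^\mathrm{M}_\rateset$, hence membership in $\mathcal{T}_\Delta$) is exactly right. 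What your approach buys is transparency---it makes visible why non-homogeneity is needed (the minimizing rate matrix changes along the flow, so no single homogeneous chain suffices)---at the cost of still leaning on one external fact, namely that $\mathcal{T}_\Delta^\mathrm{M}=\{{}^PT_0^\Delta : P\in\mathcal{P}^\mathrm{M}_\rateset\}$ is closed; but that is precisely~\citep[Cor 5.18]{krak2021phd}, which the paper itself invokes in the very next result (Proposition~\ref{prop:lower_discrete_time_by_markov}), so this is a legitimate dependency rather than a gap. Two small points to fix: your displayed product $e^{Q_0^{(m)}\Delta/m}\cdots e^{Q_{m-1}^{(m)}\Delta/m}$ contradicts your own stated index reversal---with the paper's convention ${}^PT_0^\Delta={}^PT_0^{\Delta/m}\cdots{}^PT_{(m-1)\Delta/m}^{\Delta}$ acting on column vectors, the factor containing $Q_0^{(m)}$ must be the \emph{rightmost} one, i.e.\ the one attached to the final subinterval---and you should note explicitly that $(I+\tfrac{\Delta}{m}Q)$ is a transition matrix for $m$ large enough that $\tfrac{\Delta}{m}\norm{Q}\leq 1$, so that all factors in the telescoping estimate have norm one.
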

\begin{proof}
	This follows from~\citep[Cor 6.24]{krak2021phd} and the fact that $\rateset$ is non-empty, compact, convex, and has separately specified rows.
\end{proof}

\begin{proposition}\label{prop:lower_discrete_time_by_markov}
	For all $\Delta>0$ there is a Markov chain $P\in\mathcal{P}_\rateset^{\mathrm{M}}$ with corresponding transition matrix $T={}^{P}T_{0}^\Delta$, such that the unique solution $h$ to $h=\Delta\ind{A^c}+\ind{A^c} Th$ satisfies $h=\underline{h}_\Delta$.
\end{proposition}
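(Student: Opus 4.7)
The plan is to realize $\underline{h}_\Delta$ as the solution of $h = \Delta\ind{A^c} + \ind{A^c} Th$ for a transition matrix $T$ that is itself the $\Delta$-step transition matrix of some $P \in \mathcal{P}_\rateset^{\mathrm{M}}$. I would obtain such a $T$ as a limit point of the co-sequence constructed in Proposition~\ref{prop:discrete_lower_by_limit_of_transmat}. Specifically, starting from any $T_0 \in \mathcal{T}_\Delta$, I invoke Proposition~\ref{prop:discrete_lower_by_limit_of_transmat} to obtain sequences $(h_n)_{n\in\natswith}$ and $(T_n)_{n\in\nats}$ with $h_n \to \underline{h}_\Delta$, and Lemma~\ref{lemma:cosequence_markov_transmat} lets me choose each $T_n = {}^{P_n}T_0^\Delta$ for some $P_n \in \mathcal{P}_\rateset^{\mathrm{M}}$.

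Next I pass to a limit. The set $\mathcal{T}_\Delta$ is a closed subset of the compact set of all transition matrices on $\states$, hence compact, so there is a subsequence $T_{n_k} \to T^\ast \in \mathcal{T}_\Delta$. The map $(T,h) \mapsto \Delta\ind{A^c} + \ind{A^c}Th$ is jointly continuous in this finite-dimensional setting, so passing to the limit in $h_{n_k} = \Delta\ind{A^c} + \ind{A^c}T_{n_k}h_{n_k}$ yields
\begin{equation*}
\underline{h}_\Delta = \Delta\ind{A^c} + \ind{A^c} T^\ast \underline{h}_\Delta,
\end{equation*}
so $\underline{h}_\Delta$ solves the target equation with $T = T^\ast$.

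For uniqueness of the solution of $h = \Delta\ind{A^c} + \ind{A^c} Th$ for a fixed $T \in \mathcal{T}_\Delta$, I would recycle the argument of Proposition~\ref{prop:upper_subsemigroup_contractive}: Assumption~\ref{ass:reachable} together with the domination $T\ind{A} \geq e^{\lrate\Delta}\ind{A}$ yields $T\ind{A^c}(x) \leq 1 - \epsilon$ uniformly on $A^c$ for some $\epsilon > 0$, whence $\norm{T\vert_{A^c}} \leq 1 - \epsilon < 1$ and $I - T\vert_{A^c}$ is invertible. Consequently $\underline{h}_\Delta$ is the unique solution for $T = T^\ast$.

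The main obstacle I anticipate is the final step: identifying $T^\ast$ itself as ${}^{P}T_0^\Delta$ for some single $P \in \mathcal{P}_\rateset^{\mathrm{M}}$. Fortunately, the general correspondence cited for Lemma~\ref{lemma:cosequence_markov_transmat}, namely~\citep[Cor 6.24]{krak2021phd}, guarantees that \emph{every} $T \in \mathcal{T}_\Delta$ arises as the $\Delta$-step transition matrix of some element of $\mathcal{P}_\rateset^{\mathrm{M}}$ under our standing hypotheses on $\rateset$; applying this to $T^\ast$ delivers the required $P$. If one wished to avoid invoking that corollary, the backup plan is to verify directly that $\{{}^{P}T_0^\Delta : P \in \mathcal{P}_\rateset^{\mathrm{M}}\}$ is a closed subset of $\mathcal{T}_\Delta$; since every $T_n$ lies in this set by Lemma~\ref{lemma:cosequence_markov_transmat}, so does the limit $T^\ast$, and we recover a suitable $P$ either way.
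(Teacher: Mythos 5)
Your overall strategy is the paper's: run Proposition~\ref{prop:discrete_lower_by_limit_of_transmat} with the co-sequence from Lemma~\ref{lemma:cosequence_markov_transmat}, extract a convergent subsequence of the matrices $T_n={}^{P_n}T_0^\Delta$, and identify the limit matrix as the one realizing $\underline{h}_\Delta$. Your passage to the limit (joint continuity of $(T,h)\mapsto\Delta\ind{A^c}+\ind{A^c}Th$, so that $\underline{h}_\Delta$ solves the equation for $T^\ast$) and your uniqueness argument (domination of $e^{\lrate\Delta}$ plus Assumption~\ref{ass:reachable} gives $\norm{T^\ast\resAc}\leq 1-\epsilon$, hence invertibility of $I-T^\ast\resAc$) are both sound, and are a slightly more direct version of the paper's appeal to the resolvent representation and to a result of \citet{krak2020computing}.

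The gap is in the step you yourself flag as the main obstacle. Your primary justification---that \citep[Cor 6.24]{krak2021phd} guarantees that \emph{every} $T\in\mathcal{T}_\Delta$ is of the form ${}^{P}T_0^\Delta$ for some $P\in\mathcal{P}_\rateset^{\mathrm{M}}$---is an overreach of that corollary and is not true in general. The way that corollary is used in this paper (see Lemma~\ref{lemma:cosequence_markov_transmat} and the paper's proof of this proposition) is in the other direction: matrices ${}^{P}T_0^\Delta$ with $P\in\mathcal{P}_\rateset^{\mathrm{M}}$ \emph{dominate} $e^{\lrate\Delta}$, i.e. $\mathcal{T}_\Delta^{\mathrm{M}}\coloneqq\{{}^{P}T_0^\Delta:P\in\mathcal{P}_\rateset^{\mathrm{M}}\}\subseteq\mathcal{T}_\Delta$, and for each fixed $f$ the value $e^{\lrate\Delta}f$ is attained by some element of $\mathcal{T}_\Delta^{\mathrm{M}}$. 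Pointwise attainment of the lower envelope does not give the reverse inclusion $\mathcal{T}_\Delta\subseteq\mathcal{T}_\Delta^{\mathrm{M}}$; the set of reachable $\Delta$-step transition matrices is in general a strict subset of the full dominating set of $e^{\lrate\Delta}$, which is exactly why compactness of $\mathcal{T}_\Delta$ alone does not finish the argument. Your backup plan is the correct route and is what the paper does: one must take the limit \emph{inside} $\mathcal{T}_\Delta^{\mathrm{M}}$, which requires knowing that this set is closed (in fact compact); this is a non-trivial ingredient, supplied in the paper by \citep[Cor 5.18]{krak2021phd} together with the standing assumptions that $\rateset$ is non-empty, compact and convex. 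As written, your proof leaves that closedness unverified, so the identification of $T^\ast$ as a transition matrix of some $P\in\mathcal{P}_\rateset^{\mathrm{M}}$ is not yet established.
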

\begin{proof}
	Let $\mathcal{T}_\Delta^\mathrm{M}\coloneqq \{{}^PT_0^\Delta\,:\,P\in\mathcal{P}_\rateset^{\mathrm{M}}\}$, and	let $(h_n)_{\in\nats}$ be as in Proposition~\ref{prop:discrete_lower_by_limit_of_transmat}, with the co-sequence $(T_n)_{n\in\nats}$ chosen as in Lemma~\ref{lemma:cosequence_markov_transmat} to consist of transition matrices corresponding to Markov chains in $\mathcal{P}_\rateset^\mathrm{M}$. Then $(T_n)_{n\in\nats}$ lives in $\mathcal{T}_\Delta^\mathrm{M}$.
	
	The set $\mathcal{T}_{\Delta}^\mathrm{M}$ is compact by~\citep[Cor 5.18]{krak2021phd} and the fact that $\rateset$ is non-empty, compact, and convex. Hence we can find a subsequence $(T_{n_j})_{j\in\nats}$ with $\lim_{j\to+\infty} T_{n_j}=:T\in\mathcal{T}_\Delta^\mathrm{M}$. Since $T\in\mathcal{T}_\Delta^\mathrm{M}$, there is a Markov chain $P\in\mathcal{P}_\rateset^\mathrm{M}$ with corresponding transition matrix $T={}^{P}T_{0}^\Delta$.
	
	Moreover, since $T,T_{n_j}\in\mathcal{T}_\Delta^\mathrm{M}$, it follows from~\citep[Cor 6.24]{krak2021phd} that the transition matrices $T$ and all $T_{n_j}$ dominate the lower transition operator $e^{\lrate \Delta}$. Together with Assumption~\ref{ass:reachable}, this allows us to invoke~\cite[Prop 6]{krak2020computing}, by which we can let $h$ be the unique solution to $h=\Delta\ind{A^c}+\ind{A^c} Th$, and it holds for any $j\in\nats$ that $h_{n_j}\vert_A=0$, and
	\begin{equation*}
		h_{n_j}\resAc=(I-T_{n_j}\resAc)^{-1}\ones\Delta\,.
	\end{equation*}
	Similarly, it holds that $h\vert_A=0$, and
	\begin{equation*}
		h\resAc = (I-T\resAc)^{-1}\ones\Delta\,.
	\end{equation*}
	Since $\lim_{h\to+\infty}T_{n_j}=T$ and by continuity of the map $M\mapsto (I-M)^{-1}$---which holds since all these inverses exist---it follows that $h\resAc = \lim_{j\to+\infty}h_{n_j}\resAc$. Since also $h\vert_A=h_{n_j}\vert_A$, it follows that $\lim_{j\to+\infty}h_{n_j}=h$.
	
	By Proposition~\ref{prop:discrete_lower_by_limit_of_transmat} we have $\lim_{n\to+\infty} h_n=\underline{h}_\Delta$, and hence we conclude that $\underline{h}_\Delta = \lim_{j\to+\infty} h_{n_j} = h$.
\end{proof}

\begin{proposition}\label{prop:markov_transmat_as_product}
	Fix any $t\geq 0$ and consider any Markov chain $P\in\mathcal{P}_\rateset^\mathrm{M}$ with transition matrix ${}^{P}T_0^t$. Choose any $\epsilon>0$. Then there is some $m\in\nats$ such that for all $n\geq m$ there are $Q_1,\ldots,Q_n\in\rateset$, such that
	\begin{equation*}
		\norm{{}^{P}T_0^t - \prod_{i=1}^n(I+\nicefrac{t}{n}Q_i)} < \epsilon\,.
	\end{equation*}
\end{proposition}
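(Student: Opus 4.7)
My plan is to exploit the characterization of the set $\mathcal{P}_\rateset^\mathrm{M}$ given in~\citep[Sec 5.2]{krak2021phd}, which essentially describes its elements as those Markov chains whose transition matrices are uniform limits of product approximations built from rate matrices drawn from $\rateset$. In this sense the proposition is not really a new result, but rather a repackaging of the construction underlying $\mathcal{P}_\rateset^\mathrm{M}$ in a form that is convenient for the later discretization arguments.

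First, I would invoke this characterization for our chosen $P \in \mathcal{P}_\rateset^\mathrm{M}$: the transition matrix ${}^PT_0^t$ arises as a limit of Euler-like discretizations of the non-autonomous matrix ODE~\eqref{eq:nonhomogen_diffential_form}. That is, for every partition $0 = s_0 < s_1 < \cdots < s_n = t$ with sufficiently small mesh and appropriate choices $Q_i \in \rateset$, the product $\prod_{i=1}^n \bigl(I + (s_i - s_{i-1}) Q_i\bigr)$ approximates ${}^PT_0^t$ in operator norm, with error tending to zero uniformly as the mesh vanishes. The boundedness of $\rateset$ in the induced operator norm is crucial here, since it provides a uniform Lipschitz-type control on how much a single factor $(I + \Delta Q)$ can deviate from $e^{Q \Delta}$; combined with the semigroup-style telescoping used to compare $\prod_i (I+\Delta_i Q_i)$ with the true transition matrix, this yields the desired error bound.

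Second, I would specialize to the uniform partition $s_i = i t/n$. Because $\rateset$ is bounded, a standard refinement argument (replacing any sufficiently fine partition by the uniform one and absorbing the discrepancy into a first-order term in the mesh size) shows that the uniform partition achieves essentially the same rate of convergence as any other partition of comparable mesh. Hence, given $\epsilon > 0$, we may choose $m \in \nats$ large enough that for every $n \geq m$ there exist $Q_1, \ldots, Q_n \in \rateset$ with $\norm{{}^PT_0^t - \prod_{i=1}^n (I + (t/n) Q_i)} < \epsilon$.

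The main obstacle is the interface with the technical definition of $\mathcal{P}_\rateset^\mathrm{M}$ in~\citep[Sec 4.6 and 5.2]{krak2021phd}, which admits Markov chains that need not be everywhere differentiable and therefore do not come equipped with a pointwise family of rate matrices $\rateforp_s$. One must use the product-integral characterization given in that reference rather than an ODE-based derivation. Once the existence of an approximating product sequence is established, the quantitative estimate for the uniform partition follows from standard manipulations using that $I + (t/n) Q$ is the first-order Taylor approximation of $e^{Q t/n}$, together with the uniform bound $\norm{Q} \leq \sup_{Q \in \rateset}\norm{Q} < +\infty$ provided by the regularity assumptions on $\rateset$.
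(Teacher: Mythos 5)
Your proposal is correct and follows essentially the same route as the paper: the paper invokes~\citep[Lemma 5.12]{krak2021phd} to obtain, directly on the uniform grid with $\Delta=\nicefrac{t}{n}$, matrices $Q_i\in\rateset$ with $\norm{{}^{P}T_{(i-1)\Delta}^{i\Delta}-(I+\Delta Q_i)}\leq\Delta\epsilon'$, then factors ${}^{P}T_0^t$ via the Markov property and telescopes the per-step errors. The only difference is that your detour through arbitrary partitions and a refinement argument is unnecessary, since the cited local-approximation lemma already applies to the uniform partition.
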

\begin{proof}
	The result is trivial if $t=0$, so let us consider the case where $t>0$. Let $\epsilon'\coloneqq \nicefrac{\epsilon}{2t}$. By~\citep[Lemma 5.12]{krak2021phd} there is some $m\in\nats$ such that for all $n\geq m$ and with $\Delta\coloneqq\nicefrac{t}{n}$, for all $i=1,\ldots,n$ there is some $Q_i\in\rateset$ such that
	\begin{equation*}
		\norm{{}^{P}T_{(i-1)\Delta}^{i\Delta} - (I+\Delta Q_i)} \leq \Delta\epsilon'\,.
	\end{equation*}
	Since $P$ is a Markov chain, we can factor its transition matrices~\citep[Prop 5.1]{krak2021phd} as
	\begin{equation*}
		{}^{P}T_0^t = {}^{P}T_0^\Delta{}^{P}T_\Delta^{2\Delta}\cdots {}^{P}T_{t-\Delta}^{t} = \prod_{i=1}^n {}^{P}T_{(i-1)\Delta}^{i\Delta}\,.
	\end{equation*}
	Using~\citep[Lemma B.5]{krak2021phd} for the first inequality, we have
	\begin{align*}
		&\norm{{}^{P}T_0^t - \prod_{i=1}^n(I+\Delta Q_i)} \\
		&= \norm{\prod_{i=1}^n {}^{P}T_{(i-1)\Delta}^{i\Delta} - \prod_{i=1}^n(I+\Delta Q_i)} \\
		&\leq \sum_{i=1}^n \norm{{}^{P}T_{(i-1)\Delta}^{i\Delta} - (I+\Delta Q_i)} \\
		&\leq \sum_{i=1}^n \Delta\epsilon' = n\frac{t}{n}\frac{\epsilon}{2t} = \frac{\epsilon}{2}\,,
	\end{align*}
	which concludes the proof.
\end{proof}

\begin{lemma}\label{lemma:limit_of_Q_limit_of_h}
	Consider a sequence $(Q_n)_{n\in\nats}$ in $\rateset$ with limit $Q_*\coloneqq \lim_{n\to+\infty} Q_n$. For all $n\in\nats$, let $h_n$ denote the minimal non-negative solution to $\ind{A}h_n=\ind{A^c}+\ind{A^c} Q_nh_n$, and let $h_*$ denote the minimal non-negative solution to $\ind{A}h_*=\ind{A^c} + \ind{A^c} Q_*h_*$. Then $h_*=\lim_{n\to+\infty}h_n$.
\end{lemma}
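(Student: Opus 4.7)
The plan is to use the explicit characterization of hitting times given by Proposition~\ref{prop:cont_precise_by_inverse} to reduce the convergence statement to a continuity argument for matrix inversion. For each $n\in\nats$, let $G_n \coloneqq Q_n\resAc$ denote the subgenerator associated to $Q_n$, and similarly let $G_* \coloneqq Q_*\resAc$. Since $\rateset$ is compact and $(Q_n) \subseteq \rateset$, the limit $Q_*$ lies in $\rateset$ as well, so Corollary~\ref{cor:subgen_inverse} applies to every $Q_n$ and to $Q_*$: all of $G_n^{-1}$ and $G_*^{-1}$ exist. By Proposition~\ref{prop:cont_precise_by_inverse}, the (unique) minimal non-negative solutions satisfy $h_n\resAc = -G_n^{-1}\ones$, $h_*\resAc = -G_*^{-1}\ones$, and $h_n(x)=0=h_*(x)$ for all $x\in A$.

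Next, I would argue that $G_n \to G_*$ in operator norm. This is immediate because restriction to $A^c$ is a continuous linear operation on the (finite-dimensional) space of operators on $\gamblesX$, and $Q_n \to Q_*$ by assumption. So convergence $h_n \to h_*$ reduces to showing that $G_n^{-1} \to G_*^{-1}$.

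To establish this, I would use the standard identity
\begin{equation*}
    G_n^{-1} - G_*^{-1} = G_n^{-1}(G_* - G_n)G_*^{-1},
\end{equation*}
which gives the bound
\begin{equation*}
    \norm{G_n^{-1} - G_*^{-1}} \leq \norm{G_n^{-1}}\,\norm{G_*^{-1}}\,\norm{G_* - G_n}.
\end{equation*}
The only delicate point is that a priori $\norm{G_n^{-1}}$ could blow up. However, since $G_* - G_n \to 0$, for all sufficiently large $n$ the perturbation $G_n = G_* + (G_n - G_*)$ lies in a neighborhood of $G_*$ on which inversion is (Lipschitz) continuous; equivalently, writing $G_n = G_*(I + G_*^{-1}(G_n - G_*))$, the Neumann series gives $\norm{G_n^{-1}} \leq \norm{G_*^{-1}}/(1 - \norm{G_*^{-1}}\norm{G_n - G_*})$ for $n$ large, so $\norm{G_n^{-1}}$ stays bounded. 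Substituting back yields $\norm{G_n^{-1} - G_*^{-1}} \to 0$, and hence $h_n\resAc \to h_*\resAc$. Combined with $h_n\vert_A = 0 = h_*\vert_A$ for all $n$, this gives $h_n \to h_*$ as claimed.

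The main obstacle is really just keeping $\norm{G_n^{-1}}$ under control near the limit; everything else is a mechanical consequence of the closed-form expression provided by Proposition~\ref{prop:cont_precise_by_inverse}. Since we only need pointwise convergence of $h_n$ and $\gamblesAc$ is finite-dimensional, no uniformity over $\rateset$ is required here, which is why this local continuity-of-inversion argument is sufficient.
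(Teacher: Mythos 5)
Your proposal is correct and follows essentially the same route as the paper: reduce the claim to $G_n^{-1}\to G_*^{-1}$ via the closed form $h\resAc=-G^{-1}\ones$ from Proposition~\ref{prop:cont_precise_by_inverse}, using Corollary~\ref{cor:subgen_inverse} for existence of the inverses and handling $A$ trivially. The paper simply asserts the continuity of inversion where you justify it with the resolvent identity and a Neumann-series bound, so your write-up is, if anything, more complete.
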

\begin{proof}
	Since $\rateset$ is closed, we have $Q_*\in\rateset$.
	Let $(G_n)_{n\in\nats}$ and $G_*$ denote the subgenerators of $(Q_n)_{n\in\nats}$ and $Q_*$, respectively. Then $G_*^{-1}$ and $G_n^{-1}$, $n\in\nats$ exist by Corollary~\ref{cor:subgen_inverse}, and hence we also have $\lim_{n\to+\infty} G_n^{-1}=G_*^{-1}$. Right-multiplying with $-\ones$ and applying Proposition~\ref{prop:cont_precise_by_inverse} gives
	\begin{equation*}
		\lim_{n\to+\infty} h_n\resAc = \lim_{n\to+\infty} -G_n^{-1}\ones = -G_*^{-1}\ones = h_*\resAc\,.
	\end{equation*}
	Finally, by definition we trivially have $h_n(x)=0=h_*(x)$ for all $x\in A$. Hence also $\lim_{n\to+\infty}h_n\vert_A = h_*\vert_A$.
\end{proof}

\begin{lemma}{\citep[Cor 13]{krak2019hitting}}\label{lemma:discrete_infimum_bound}
	Fix any $\Delta>0$ and let $\underline{h}_\Delta$ be the minimal non-negative solution to the non-linear system~\eqref{eq:lower_discretizes_system}. Let $\mathcal{T}_\Delta$ denote the set of transition matrices that dominate $e^{\lrate \Delta}$ and, for all $T\in\mathcal{T}_{\Delta}$, let $h_T$ denote the minimal non-negative solution to the linear system $h_T=\Delta\ind{A^c} + \ind{A^c}Th_T$. Then it holds that
	\begin{equation*}
		\underline{h}_\Delta = \inf_{T\in\mathcal{T}_\Delta} h_T\,.
	\end{equation*}
\end{lemma}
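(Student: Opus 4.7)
The plan is to prove the two inequalities $\underline{h}_\Delta \leq \inf_{T \in \mathcal{T}_\Delta} h_T$ and $\underline{h}_\Delta \geq \inf_{T \in \mathcal{T}_\Delta} h_T$ separately, exploiting the duality between $e^{\lrate\Delta}$ and its dominating set $\mathcal{T}_\Delta$ supplied by Proposition~\ref{prop:duality_trans}, together with a monotone fixed-point argument.

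For the direction $\underline{h}_\Delta \leq \inf_T h_T$, I would fix any $T \in \mathcal{T}_\Delta$ and show that $h_T$ is a non-negative super-solution to the non-linear system~\eqref{eq:lower_discretizes_system}. Since $T \in \mathcal{T}_\Delta$ dominates $e^{\lrate\Delta}$, and $h_T \geq 0$, we have $T h_T \geq e^{\lrate\Delta} h_T$, and hence
\begin{equation*}
h_T = \Delta\ind{A^c} + \ind{A^c} T h_T \geq \Delta\ind{A^c} + \ind{A^c} e^{\lrate\Delta} h_T.
\end{equation*}
It then suffices to show that any non-negative super-solution of~\eqref{eq:lower_discretizes_system} dominates $\underline{h}_\Delta$ pointwise. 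For this, define $F(h) \coloneqq \Delta\ind{A^c} + \ind{A^c} e^{\lrate\Delta} h$; since $e^{\lrate\Delta}$ is a lower transition operator, its axioms imply that $f \leq g$ entails $e^{\lrate\Delta} f \leq e^{\lrate\Delta} g$, so $F$ is monotone on non-negative functions. Iterating from $h^{(0)} = 0$ yields a non-decreasing sequence $h^{(n+1)} \coloneqq F(h^{(n)})$ that lies pointwise below every non-negative super-solution of~\eqref{eq:lower_discretizes_system} and converges (being bounded) to a non-negative fixed point $h^\star$ of $F$. By minimality of $\underline{h}_\Delta$, $\underline{h}_\Delta \leq h^\star$, and together with $h^\star \leq h_T$ this yields $\underline{h}_\Delta \leq h_T$, hence $\underline{h}_\Delta \leq \inf_T h_T$.

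For the reverse direction, I would apply the attainability clause of Proposition~\ref{prop:duality_trans} to the specific function $\underline{h}_\Delta$, obtaining some $T^\star \in \mathcal{T}_\Delta$ with $T^\star \underline{h}_\Delta = e^{\lrate\Delta}\underline{h}_\Delta$. Substituting this identity into~\eqref{eq:lower_discretizes_system} shows that $\underline{h}_\Delta$ is itself a non-negative solution of the \emph{linear} system $h = \Delta\ind{A^c} + \ind{A^c} T^\star h$. Minimality of $h_{T^\star}$ then gives $h_{T^\star} \leq \underline{h}_\Delta$, whence $\inf_{T \in \mathcal{T}_\Delta} h_T \leq h_{T^\star} \leq \underline{h}_\Delta$. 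Combined with the first direction this also shows that the infimum is attained at $T^\star$.

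The main obstacle I anticipate is a clean justification that $\underline{h}_\Delta$ is well-defined as a bounded minimal non-negative solution, and that it coincides with the Picard limit $h^\star$ above; in particular, one needs the iterates $h^{(n)}$ to remain uniformly bounded so that the pointwise limit exists. This should follow from Assumption~\ref{ass:reachable} together with Proposition~\ref{prop:upper_subsemigroup_contractive} (or its discretized analogue on $\mathcal{T}_\Delta$, giving $\norm{(e^{\lrate\Delta})\resAc} < 1$), which forces the hitting-time recursion on $A^c$ to be contractive and hence to converge uniformly to a unique bounded non-negative fixed point. Once that existence is in place, the remainder of the argument is a straightforward two-sided bound.
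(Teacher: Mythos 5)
Your proposal is sound, and it is worth pointing out that the paper itself gives no argument for this statement at all: Lemma~\ref{lemma:discrete_infimum_bound} is imported wholesale as Corollary~13 of \citet{krak2019hitting}, so what you have written is a self-contained replacement for an external citation rather than a variant of an in-paper proof. Your two-sided bound is the natural direct route and it works: domination gives $Th_T\geq e^{\lrate\Delta}h_T$ (no non-negativity of $h_T$ is even needed for this), so each $h_T$ is a non-negative super-solution of~\eqref{eq:lower_discretizes_system}; the monotone Picard iteration from $0$ produces the least non-negative fixed point, which sits below every super-solution and coincides with $\underline{h}_\Delta$; and the attainment clause of Proposition~\ref{prop:duality_trans} applied to $\underline{h}_\Delta$ yields $T^\star\in\mathcal{T}_\Delta$ with $T^\star\underline{h}_\Delta=e^{\lrate\Delta}\underline{h}_\Delta$, whence $h_{T^\star}\leq\underline{h}_\Delta$ by minimality of $h_{T^\star}$---showing in passing that the infimum is attained. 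Two points would tighten the step you flag as the main obstacle. First, the boundedness of the iterates only requires exhibiting \emph{one} finite super-solution, and $h_T$ itself serves: for any $T\in\mathcal{T}_\Delta$ and $x\in A^c$ we have $T\ind{A}(x)\geq e^{\lrate\Delta}\ind{A}(x)>0$ by Assumption~\ref{ass:reachable}, so exactly as in the proof of Proposition~\ref{prop:upper_subsemigroup_contractive} one gets $\norm{T\resAc}<1$ and hence $h_T\resAc=(I-T\resAc)^{-1}\Delta\ones$ is finite (Neumann series). Second, the bound $\norm{e^{\lrate\Delta}\resAc}<1$ you invoke is not literally what Proposition~\ref{prop:upper_subsemigroup_contractive} states (that concerns the upper operator $e^{\usubgen\Delta}$); it does hold, via conjugacy and monotonicity, since $\abs{e^{\lsubgen\Delta}f}\leq e^{\usubgen\Delta}\abs{f}$ gives $\norm{e^{\lsubgen\Delta}}\leq\norm{e^{\usubgen\Delta}}<1$, but you can also simply drop it: once a single finite super-solution exists, your monotone iteration converges and no contraction estimate is needed. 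What the paper's citation buys is precisely not having to redo this well-definedness analysis, which \citet{krak2019hitting} carry out for general discrete-time imprecise-Markov chains; your argument buys a short, self-contained proof specialized to the operator $e^{\lrate\Delta}$ using only Proposition~\ref{prop:duality_trans} and Assumption~\ref{ass:reachable}.
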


\begin{proof}[Proof of Proposition~\ref{prop:imprecise_limit}]
	We only give the proof for the lower hitting times, i.e. that $\lim_{\Delta\to 0^+}\norm{\underline{h}_\Delta - \underline{h}}=0$. The argument for the upper hitting times is completely analogous.
	
	Choose any two sequences $(\Delta_n)_{n\in\nats}$ and $(\epsilon_n)_{n\in\nats}$ in $\realspos$ such that $\lim_{n\to+\infty} \Delta_n=0$ and $\lim_{n\to+\infty} \epsilon_n=0$. We will assume without loss of generality that $\Delta_n\norm{\rateset}\leq 1$ for all $n\in\nats$, where $\norm{\rateset}=\sup_{Q\in\rateset}\norm{Q}$. 
	
	Now first fix any $n\in\nats$, and consider $\underline{h}_{\Delta_n}$. By Proposition~\ref{prop:lower_discrete_time_by_markov} there is a Markov chain $P_n\in\mathcal{P}_\rateset^\mathrm{M}$ with transition matrix $T_n\coloneqq {}^{P_n}T_0^{\Delta_n}$ such that the unique solution $h_n$ to $h_n=\Delta_n\ind{A^c}+\ind{A^c} T_nh_n$ satisfies $h_n=\underline{h}_{\Delta_n}$.
	
	By Proposition~\ref{prop:markov_transmat_as_product}, there are $m_n\in\nats$ with $m_n\geq n$ and $Q^{(n)}_1,\ldots,Q^{(n)}_{m_n}$ in $\rateset$ such that, with
	\begin{equation*}
		\Phi_n \coloneqq \prod_{i=1}^{m_n}\left(I+\frac{\Delta_n}{m_n}Q_i^{(n)}\right)\,,
	\end{equation*}
	it holds that $\norm{T_n - \Phi_n} < \epsilon_n$. Now define
	\begin{equation*}
		Q_n \coloneqq \sum_{i=1}^{m_n} \frac{1}{m_n} Q_i^{(n)}\,.
	\end{equation*}
	Then $Q_n\in\rateset$ since $\rateset$ is convex. Let $h_{Q_n}$ denote the minimal non-negative solution to $\ind{A}h_{Q_n}=\ind{A^c} + \ind{A^c} Q_n h_{Q_n}$.
	
	By repeating this construction for all $n\in\nats$, we obtain a sequence $(Q_n)_{n\in\nats}$ in $\rateset$. Since $\rateset$ is (sequentially) compact, we can consider a subsequence $(Q_{n_j})_{j\in\nats}$ such that $\lim_{j\to+\infty} Q_{n_j} =: Q_* \in\rateset$. 
	
	Let $h_*$ be the minimal non-negative solution to $\ind{A}h_*=\ind{A^c}+\ind{A^c} Q_*h_*$. We now need to estimate some norm bounds that hold by choosing $j$ large enough. Let $K=5$ and fix any $\delta>0$.
	
	Since $(Q_{n_j})_{j\in\nats}$ converges to $Q_*$, it follows from Lemma~\ref{lemma:limit_of_Q_limit_of_h} that for $j$ large enough, we have
	\begin{equation}\label{eq:prop:imprecise_limit:step_1}
		\norm{h_{Q_{n_j}} - h_*} < \frac{\delta}{K}
	\end{equation}
	Since $h_*$ is bounded, this also implies that the sequence $(h_{Q_{n_j}})_{j\in\nats}$ is eventually uniformly bounded above in norm by some constant $M\geq 0$, say.

	For all $j\in\nats$, let $\hat{h}_{n_j}$ be such that $\hat{h}_{n_j}\resAc\coloneqq (I-e^{G_{n_j}\Delta_{n_j}})^{-1}\Delta_{n_j}\ones$ and $\hat{h}_{n_j}\vert_A\coloneqq 0$. Then
	\begin{equation*}
		\hat{h}_{n_j} = \Delta_{n_j}\ind{A^c} + \ind{A^c} e^{Q_{n_j}\Delta_{n_j}}\hat{h}_{n_j}\,.
	\end{equation*}
	For $j$ large enough we eventually have $\Delta_{n_j}\xi<1$, and so by Proposition~\ref{prop:precise_uniform_limit}, we then have
	\begin{align*}
		\norm{\hat{h}_{n_j} - h_{Q_{n_j}}} &< \Delta_{n_j}L\norm{h_{Q_{n_j}}} \\
		&\leq \Delta_{n_j}L M\,,
	\end{align*} 
	with $L,M$ independent of $j$. Hence for $j$ large enough we have
	\begin{equation}\label{eq:prop:imprecise_limit:step_2}
		\norm{\hat{h}_{n_j} - h_{Q_{n_j}}} < \frac{\delta}{K}\,.
	\end{equation}
	
	Let next $\tilde{h}_{n_j}$ be the minimal non-negative solution to $\tilde{h}_{n_j}=\Delta_{n_j}\ind{A^c}+\ind{A^c} \Phi_{n_j}\tilde{h}_{n_j}$. Since $m_{n_j}\geq n_j$, for $j$ large enough we have $\norm{\Phi_{n_j}\resAc} < 1$ due to Assumption~\ref{ass:reachable}.
	
	By~\citep[Lemmas B.8 and B.12]{krak2021phd} we have
	\begin{equation*}
		\norm{\Phi_{n_j} - e^{Q_{n_j}\Delta_{n_j}}} \leq 2\Delta_{n_j}^2\norm{\rateset}^2\,,
	\end{equation*}
	and so, for any $\epsilon>0$, we can choose $j$ large enough so that eventually $\norm{\Phi_{n_j} - e^{Q_{n_j}\Delta_{n_j}}} <\epsilon$. Using the continuity of the map $T\mapsto(I-T)^{-1}$ on operators $T$ for which this inverse exists, for large enough $j$ we therefore find that
	\begin{align*}
		&\norm{\tilde{h}_{n_j}\resAc - \hat{h}_{n_j}\resAc}  \\
		&= \norm{\bigl((I-\Phi_{n_j}\resAc)^{-1} - (I-e^{Q_{n_j}\Delta_{n_j}}\resAc)^{-1}\bigr)\Delta_{n_j}\ones}  \\
		&< \Delta_{n_j}\frac{\delta}{K} \leq \frac{\delta}{K}\,. 
	\end{align*}
	Since $\tilde{h}_{n_j}\vert_A=0=\hat{h}_{n_j}\vert_A$, this implies that then also
	\begin{equation}\label{eq:prop:imprecise_limit:step_3}
		\norm{\tilde{h}_{n_j} - \hat{h}_{n_j}} < \frac{\delta}{K}\,. 
	\end{equation}
	
	Next, we recall that $\underline{h}_{\Delta_{n_j}} = h_{n_j}$, and
	\begin{equation*}
		\norm{T_{n_j} - \Phi_{n_j}} < \epsilon_{n_j}\,.
	\end{equation*} 
	Hence by continuity of the map $T\mapsto (I-T)^{-1}$ on operators $T$ for which this inverse exists, for large enough $j$ we find that
	\begin{align*}
		&\norm{h_{n_j}\resAc - \tilde{h}_{n_j}\resAc} \\
		&= \norm{\bigl((I-T_{n_j}\resAc)^{-1} - (I-\Phi_{n_j}\resAc)^{-1}\bigr)\Delta_{n_j}\ones} \\
		&< \Delta_{n_j}\frac{\delta}{K} \leq \frac{\delta}{K}\,.
	\end{align*}
	Since $h_{n_j}\vert_A = 0= \tilde{h}_{n_j}\vert_A$, this implies that also
	\begin{equation}\label{eq:prop:imprecise_limit:step_4}
		\norm{h_{n_j} - \tilde{h}_{n_j}} < \frac{\delta}{K}\,.
	\end{equation}
	
	Putting Equations~\eqref{eq:prop:imprecise_limit:step_1}--\eqref{eq:prop:imprecise_limit:step_4} together, we find that for any large enough $j$ it holds that
	\begin{align}
		\norm{\underline{h}_{\Delta_{n_j}} - h_*} &= \norm{h_{n_j} - h_*} \notag\\
		&\leq \norm{h_{n_j} - \tilde{h}_{n_j}} \notag\\
		&\quad + \norm{\tilde{h}_{n_j} - \hat{h}_{n_j}} \notag\\
		&\quad\quad\quad + \norm{\hat{h}_{n_j} - h_{Q_{n_j}}} \notag\\
		&\quad\quad\quad\quad + \norm{h_{Q_{n_j}} - h_*} \notag\\
		&< 4\frac{\delta}{K}\,. \label{eq:prop:imprecise_limit:jointsteps}
	\end{align}
	Since $\delta>0$ is arbitrary this clearly implies that
	\begin{equation}\label{eq:prop:imprecise_limit:sublimit}
		\lim_{j\to+\infty} \underline{h}_{\Delta_{n_j}} = h_*\,.
	\end{equation}
	Next, let us show that $h_*=\underline{h}$.	To this end, assume \emph{ex absurdo} that there is some $Q\in\rateset$ such that $h^Q(x) < h_*(x)$ for some $x\in A^c$. Let $\delta\coloneqq h_*(x) - h_Q(x) > 0$. Due to Corollary~\ref{cor:precise_discretisation_converges}, for any $\Delta>0$ small enough it holds that
	\begin{equation*}
		\norm{h_\Delta^Q - h^Q} < \frac{\delta}{K}\,.
	\end{equation*}
	This implies in particular that for large enough $j$ it holds that $h_{\Delta_{n_j}}^Q(x) < h^Q(x)+\nicefrac{\delta}{K}$. 
	Moreover, it follows from Equation~\eqref{eq:prop:imprecise_limit:jointsteps} that for large enough $j$ we have $\underline{h}_{\Delta_{n_j}}(x) > h_*(x) - 4\nicefrac{\delta}{K}$.
	It holds that $h_Q(x)=h_*(x)-\delta$, and hence, since $K=5$, we find that that for large enough $j$,
	\begin{align*}
		h_{\Delta_{n_j}}^Q(x) &< h^Q(x)+\nicefrac{\delta}{K} \\
		&= h_*(x)-\delta + \nicefrac{\delta}{K} \\
		&= h_*(x) - K\frac{\delta}{K} + \nicefrac{\delta}{K} \\
		&= h_*(x) - (K-1)\frac{\delta}{K} \\
		&= h_*(x) - 4\frac{\delta}{K} < \underline{h}_{\Delta_{n_j}}(x)\,.
	\end{align*}
	In other words, and using Lemma~\ref{lemma:discrete_infimum_bound}, we then have
	\begin{equation*}
		h^Q_{\Delta_{n_j}}(x) < \underline{h}_{\Delta_{n_j}}(x) = \inf_{T\in\mathcal{T}_{\Delta_{n_j}}}h_T(x) \leq h^Q_{\Delta_{n_j}}(x)\,,
	\end{equation*}
	where the last step used that $e^{Q\Delta_{n_j}}\in\mathcal{T}_{\Delta_{n_j}}$.
	From this contradiction we conclude that our earlier assumption must be wrong, and so it holds that $h_*(x)\leq h^Q(x)$ for all $x\in\states$ and $Q\in\rateset$. This implies that $h_*\leq \underline{h}$. Since it clearly also holds that $\underline{h}\leq h_*$ because $Q_*\in\rateset$, this implies that, indeed as claimed, $h_*=\underline{h}$.
	
	In summary, at this point we have shown that for any sequence $(\Delta_n)_{n\in\nats}$ in $\realspos$ with $\lim_{n\to+\infty}\Delta_n=0$, there is a subsequence such that $\lim_{j\to+\infty}\underline{h}_{\Delta_{n_j}}=\underline{h}$.
	
	So, finally, suppose \emph{ex absurdo} that $\lim_{\Delta\to 0^+}\underline{h}_{\Delta}\neq\underline{h}$. Then there is some sequence $(\Delta_n)_{n\in\nats}$ in $\realspos$ such that $\lim_{n\to+\infty}\Delta_n=0$, and some $\epsilon>0$, such that $\norm{\underline{h}_{\Delta_{n}}-\underline{h}}\geq\epsilon$ for all $n\in\nats$.
	By the above result, there is a subsequence such that $\lim_{j\to+\infty} \underline{h}_{\Delta_{n_j}}=\underline{h}$, which is a contradiction.
\end{proof}

\begin{proof}[Proof of Theorem~\ref{thm:hitting_times_invariant}]
	The crucial approach of this proof is to emulate~\citet[Sec 6.3]{erreygers2021phd} and consider \emph{discretized} and \emph{truncated} hitting times. By taking appropriate limits of such approximations, we then recover the ``real'' hitting times. We however need to be a bit careful with these constructions, since lower (and upper) expectation operators for continuous-time imprecise-Markov chains are not necessarily continuous with respect to arbitrary limits of such approximations~\cite[Chap~5]{erreygers2021phd}. This---fairly long---proof is therefore roughly divided into two parts; first, we construct a specific sequence of approximations, and establish the relevant continuity properties with respect to this sequence. Then, in the second part of this proof, we use this continuity to establish the main claim of this theorem.
	
	To this end, for any $t\in\realsnonneg$ and $\Delta\in\realspos$, we first consider a fixed-step grid $\nu_\Delta^t$ over $[0,t]$ with step-size $\Delta$, as
	\begin{equation}\label{eq:thm:hitting_times_invariant:grid_definition}
		\nu_\Delta^t \coloneqq \bigl\{i\Delta\,:\,i\in\natswith, i\Delta\leq t \bigr\}\,.
	\end{equation}
	We define the associated \emph{approximate} hitting time functions $\tau_\Delta^t:\Omega_{\realsnonneg}\to\reals$ for all $\omega\in\Omega_{\realsnonneg}$ as
	\begin{equation}\label{eq:thm:hitting_times_invariant:approximation_definition}
		\tau_\Delta^t(\omega) \coloneqq \min \Bigl(\bigl\{ s\in \nu_\Delta^t \,:\, \omega(s)\in A\}\cup\{t\}\Bigr)\,.
	\end{equation}
	Then by~\cite[Lemma 6.19]{erreygers2021phd}, as we take the time-horizon $t$ to infinity and the step-size $\Delta$ to zero, we have the point-wise limit to the actual hitting time function $\tau_{\realsnonneg}$, in that
	\begin{equation}\label{eq:thm:hitting_times_invariant:general_limit_approximations}
		\tau_{\realsnonneg}(\omega) = \lim_{t\to+\infty, \Delta\to 0^+}\tau_\Delta^t(\omega)\quad\text{for all $\omega\in\Omega_{\realsnonneg}$.}
	\end{equation}
	Let us now construct a specific sequence of approximate hitting time functions that will converge to this limit. To this end, first fix an arbitrary sequence $(\epsilon_n)_{n\in\natswith}$ in $\realspos$ such that $\lim_{n\to+\infty}\epsilon_n = 0$. Moreover, for any $n\in\natswith$, we introduce the (discrete-time) \emph{truncated} hitting time $\tau_{0:n}:\Omega_{\natswith}\to\reals$, defined for all $\omega\in\Omega_{\natswith}$ as
	\begin{equation*}
		\tau_{0:n}(\omega) \coloneqq \min\Bigl( \bigl\{t\in\{0,\ldots,n\}\,:\,\omega(t)\in A\bigr\}\cup\{n\} \Bigr)\,.
	\end{equation*}
	Now fix any $k\in\natswith$, let $\Delta_k\coloneqq 2^{-k}$, and let $\mathcal{T}_k$ denote the set of transition matrices that dominate $e^{\lrate\Delta_k}$. We now consider discrete-time imprecise-Markov chains parameterized by $\mathcal{T}_k$. As discussed in~\citep{krak2019hitting}, for all $n\in\natswith$ there are functions\footnote{These represent lower and an upper expectations with respect to a \emph{game-theoretic imprecise-Markov chain}, but the details don't concern us here.} $\underline{\mathbb{E}}_{\mathcal{T}_k}^{\mathrm{V}}[\tau_{0:n}\,\vert\,X_0]$ and $\overline{\mathbb{E}}_{\mathcal{T}_k}^{\mathrm{V}}[\tau_{0:n}\,\vert\,X_0]$ in $\gamblesX$ such that
	\begin{align*}
		\underline{\mathbb{E}}_{\mathcal{T}_k}^{\mathrm{V}}[\tau_{0:n}\,\vert\,X_0] &\leq \underline{\mathbb{E}}_{\mathcal{T}_k}^{\mathrm{I}}[\tau_{0:n}\,\vert\,X_0] \\
		&\leq \overline{\mathbb{E}}_{\mathcal{T}_k}^{\mathrm{I}}[\tau_{0:n}\,\vert\,X_0] \leq \overline{\mathbb{E}}_{\mathcal{T}_k}^{\mathrm{V}}[\tau_{0:n}\,\vert\,X_0]
	\end{align*}
	that, moreover, satisfy
	\begin{equation*}
		\lim_{n\to+\infty} \underline{\mathbb{E}}_{\mathcal{T}_k}^{\mathrm{V}}[\tau_{0:n}\,\vert\,X_0] = \underline{\mathbb{E}}_{\mathcal{T}_k}^{\mathrm{V}}[\tau_{\natswith}\,\vert\,X_0] = \underline{\mathbb{E}}_{\mathcal{T}_k}^{\mathrm{HM}}[\tau_{\natswith}\,\vert\,X_0]
	\end{equation*}
	and
	\begin{equation*}
		\lim_{n\to+\infty} \overline{\mathbb{E}}_{\mathcal{T}_k}^{\mathrm{V}}[\tau_{0:n}\,\vert\,X_0] = \overline{\mathbb{E}}_{\mathcal{T}_k}^{\mathrm{V}}[\tau_{\natswith}\,\vert\,X_0] = \overline{\mathbb{E}}_{\mathcal{T}_k}^{\mathrm{HM}}[\tau_{\natswith}\,\vert\,X_0]\,.
	\end{equation*}
	We already noted in Section~\ref{sec:hits_as_limits} that the functions $\underline{h}_{\Delta_k}$ and $\overline{h}_{\Delta_k}$ from Equations~\eqref{eq:lower_discretizes_system} and~\eqref{eq:upper_discretizes_system} satisfy
	\begin{equation*}
		\underline{h}_{\Delta_k} = \Delta_k\overline{\mathbb{E}}_{\mathcal{T}_k}^{\mathrm{HM}}[\tau_{\natswith}\,\vert\,X_0]
		\,\,\text{and}\,\,
		\overline{h}_{\Delta_k} = \Delta_k\overline{\mathbb{E}}_{\mathcal{T}_k}^{\mathrm{HM}}[\tau_{\natswith}\,\vert\,X_0]\,.
	\end{equation*}
	Combining the above, we find that
	\begin{equation*}
		\lim_{n\to+\infty} \Delta_k\underline{\mathbb{E}}_{\mathcal{T}_k}^{\mathrm{V}}[\tau_{0:n}\,\vert\,X_0] = \underline{h}_{\Delta_k}
	\end{equation*}
	and
	\begin{equation*}
		\lim_{n\to+\infty} \Delta_k\overline{\mathbb{E}}_{\mathcal{T}_k}^{\mathrm{V}}[\tau_{0:n}\,\vert\,X_0] = \overline{h}_{\Delta_k}\,.
	\end{equation*}
	Hence for all $k\in\natswith$, we can now choose $t_k\in\natswith$ large enough such that $t_k\geq k$, and so that with $n_k = 2^kt_k$ we have both
	\begin{equation}\label{eq:thm:hitting_times_invariant:lower_limit_discrete_by_game}
		\norm{\Delta_k\underline{\mathbb{E}}_{\mathcal{T}_k}^{\mathrm{V}}[\tau_{0:{n_k}}\,\vert\,X_0] - \underline{h}_{\Delta_k}} < \epsilon_k\,,
	\end{equation}
	and
	\begin{equation}\label{eq:thm:hitting_times_invariant:upper_limit_discrete_by_game}
		\norm{\Delta_k\overline{\mathbb{E}}_{\mathcal{T}_k}^{\mathrm{V}}[\tau_{0:{n_k}}\,\vert\,X_0] - \overline{h}_{\Delta_k}} < \epsilon_k\,.
	\end{equation}
	With these selections, we now define the sequence $(\tau_k)_{k\in\natswith}$ of approximate hitting times as $\tau_k\coloneqq \tau_{\Delta_k}^{t_k}$ for all $k\in\natswith$. Clearly we have $\lim_{k\to+\infty}\Delta_k=0$, and since $t_k\geq k$ we also find that $\lim_{k\to+\infty}t_k=+\infty$. Hence by Equation~\eqref{eq:thm:hitting_times_invariant:general_limit_approximations} we have the pointwise limit
	\begin{equation}\label{eq:thm:hitting_times_invariant:concrete_limit_approximations}
		\tau_{\realsnonneg}(\omega) = \lim_{k\to+\infty} \tau_k(\omega)\quad\text{for all $\omega\in\Omega_{\realsnonneg}$.}
	\end{equation}
	Having constructed this specific sequence that converges to the ``true'' hitting time function, we will now demonstrate the relevant continuity properties of the lower- and upper expectations of interest, with respect to this sequence.
	
	To this end, we define $\hat{\tau}:\Omega_{\realsnonneg}\to\reals\cup\{+\infty\}$ as
	\begin{equation}\label{eq:thm:hitting_times_invariant:bounder_definition}
		\hat{\tau}(\omega) \coloneqq \sup_{t\in\natswith}\sup_{n\in\natswith} \tau_{\Delta_n}^t(\omega)
		\quad\text{for all $\omega\in\Omega_{\realsnonneg}$.}
	\end{equation}
	Then for all $k\in\natswith$ we have $\tau_k(\omega)=\tau_{\Delta_k}^{t_k}(\omega)\leq \hat{\tau}(\omega)$ for all $\omega\in\Omega_{\realsnonneg}$. Moreover, since every $\tau_k$ is non-negative, it holds in fact that $\abs{\tau_k(\omega)}\leq \hat{\tau}(\omega)$ for all $k\in\natswith$ and $\omega\in\Omega_{\realsnonneg}$. This means that if we can show that the upper expectation $\smash{\overline{\mathbb{E}}_\rateset^\mathrm{I}}[\hat{\tau}\,\vert\,X_0=x]$ is bounded for all $x\in\states$, then we can use the imprecise version of the dominated convergence theorem~\cite[Thm 5.32]{erreygers2021phd} to take lower- and upper expectations of the limit in Equation~\eqref{eq:thm:hitting_times_invariant:concrete_limit_approximations}. So, we will now show that this boundedness indeed holds.

	We note that, for fixed $t\in\natswith$, $\tau_{\Delta_n}^t$ is monotonically \emph{decreasing} as we increase $n\in\natswith$. To see this, first consider the grids $\nu_{\Delta_{n}}^t$ and $\nu_{\Delta_{n+1}}^t$ over $[0,t]$. For any $s\in\nu_{\Delta_n}^t$ there is some $i\in\natswith$ such that $s=i\Delta_n$, and since $\Delta_n=2^{-n}=2\Delta_{n+1}$, we find that also $s=2i\Delta_{n+1}\in\nu_{\Delta_{n+1}}^t$. Hence we conclude that $\nu_{\Delta_n}^t \subseteq \nu_{\Delta_{n+1}}^t$. From this set inclusion, we also clearly have for any $\omega\in\Omega_{\realsnonneg}$ that
	\begin{equation*}
		\{ s\in\nu_{\Delta_n}^t\,:\,\omega(s)\in A \} \subseteq \{ s\in\nu_{\Delta_{n+1}}^t\,:\,\omega(s)\in A \}\,,
	\end{equation*}
	and so together with the fact that $s\leq t$ for all $s\in \nu_{\Delta_{n+1}}^t$, it then follows from Equation~\eqref{eq:thm:hitting_times_invariant:approximation_definition} that $\tau_{\Delta_n}^t(\omega) \geq \tau_{\Delta_{n+1}}^t(\omega)$.

	Using this observation, we immediately find that for any $t\in\natswith$ and $\omega\in\Omega_{\realsnonneg}$ it holds that
	\begin{equation*}
		\sup_{n\in\natswith} \tau_{\Delta_n}^t(\omega) = \tau_{\Delta_0}^t(\omega) = \tau_{1}^t(\omega)\,,
	\end{equation*}
	and so from Equation~\eqref{eq:thm:hitting_times_invariant:bounder_definition}, we have
	\begin{equation*}
		\hat{\tau}(\omega) = \sup_{t\in\natswith}\tau_{1}^t(\omega)\,.
	\end{equation*}
	Next, we observe that $\tau_1^t$ is monotonically increasing as we increase $t\in\natswith$. Indeed, for any $t\in\natswith$ the grid $\nu_1^t$ over $[0,t]$ simply constitutes the set $\nu_1^t=\{0,\ldots,t\}$. Hence that $\tau_1^t$ is monotonically increasing as we increase $t\in\natswith$, follows immediately from Equation~\eqref{eq:thm:hitting_times_invariant:approximation_definition}. In particular, this implies that the sequence $(\tau_1^t)_{t\in\natswith}$ converges \emph{monotonically} to $\hat{\tau}$. Moreover, we have $\tau_1^0(\omega)=0$ for all $\omega\in\Omega_{\realsnonneg}$, and so we find that identically 
	\begin{align*}
		\underline{\mathbb{E}}_\rateset^{\mathrm{I}}[\tau_1^0\,\vert\,X_0] = \inf_{P\in\mathcal{P}_\rateset^\mathrm{I}}\mathbb{E}_P[\tau_1^0\,\vert\,X_0] = 0\,.
	\end{align*} 
	Hence by the continuity of upper expectations with respect to monotonically increasing convergent sequences of functions that are bounded below~\cite[Thm 5.31]{erreygers2021phd}, we have
	\begin{equation}\label{eq:thm:hitting_times_invariant:limit_bounder}
		\overline{\mathbb{E}}_\rateset^\mathrm{I}[\hat{\tau}\,\vert\,X_0] = \lim_{t\to+\infty, t\in\natswith}\overline{\mathbb{E}}_\rateset^\mathrm{I}[\tau_1^t\,\vert\,X_0]\,.
	\end{equation}

	Now, for every $t\in\natswith$, $\tau_{1}^t$ only depends on finitely many time-points; indeed, $\tau_{1}^t(\omega)$ only depends on the value of $\omega(s)$ for $s\in\{0,\ldots,t\}$. Using~\cite[Thm 7.2]{krak2021phd}, this means that the lower- and upper expectations of these functions with respect to the imprecise-Markov chain $\mathcal{P}^\mathrm{I}_{\rateset}$, can also be expressed as lower (resp. upper) expectations of this function with respect to an induced \emph{discrete}-time imprecise-Markov chain. Indeed, since the step-size used in these approximating functions is uniformly equal to one, and using the obvious correspondence between $\tau_1^t$ and $\tau_{0:t}$, it is not difficult to see that
	\begin{equation}\label{eq:thm:hitting_times_invariant:cont_disc_equivalence_sequence_bounders}
		\overline{\mathbb{E}}_\rateset^\mathrm{I}[\tau_1^t\,\vert\,X_0] = \overline{\mathbb{E}}_{\mathcal{T}}^\mathrm{I}[\tau_{0:t}\,\vert\,X_0]\quad\text{for all $t\in\natswith$,}
	\end{equation}
	where $\mathcal{T}$ is the set of transition matrices that dominates $e^{\lrate}$.
	
	We now again invoke the previously mentioned results from~\citep{krak2019hitting}; for any $t\in\natswith$, there is a function $\overline{\mathbb{E}}_{\mathcal{T}}^\mathrm{V}[\tau_{0:t}\,\vert\,X_0]\in\gamblesX$ that satisfies
	\begin{equation}\label{eq:thm:hitting_times_invariant:upper_bound_gametheoretic}
		\overline{\mathbb{E}}_{\mathcal{T}}^\mathrm{I}[\tau_{0:t}\,\vert\,X_0] \leq \overline{\mathbb{E}}_{\mathcal{T}}^\mathrm{V}[\tau_{0:t}\,\vert\,X_0]\,.
	\end{equation}
	Combining Equations~\eqref{eq:thm:hitting_times_invariant:limit_bounder}, \eqref{eq:thm:hitting_times_invariant:cont_disc_equivalence_sequence_bounders}, and~\eqref{eq:thm:hitting_times_invariant:upper_bound_gametheoretic}, and using~\cite[Prop 7]{krak2019hitting} to establish the limit on the final right-hand side, we have
	\begin{align*}
		\overline{\mathbb{E}}_\rateset^\mathrm{I}[\hat{\tau}\,\vert\,X_0] &= \lim_{t\to+\infty, t\in\natswith}\overline{\mathbb{E}}_\rateset^\mathrm{I}[\tau_1^t\,\vert\,X_0] \\
		&= \lim_{t\to+\infty, t\in\natswith}  \overline{\mathbb{E}}_{\mathcal{T}}^\mathrm{I}[\tau_{0:t}\,\vert\,X_0] \\
		&\leq \lim_{t\to+\infty, t\in\natswith}  \overline{\mathbb{E}}_{\mathcal{T}}^\mathrm{V}[\tau_{0:t}\,\vert\,X_0] =  \overline{\mathbb{E}}_{\mathcal{T}}^\mathrm{V}[\tau_{\natswith}\,\vert\,X_0] \,.
	\end{align*}
	By~\cite[Thm 12]{krak2019hitting} it holds that
	\begin{equation*}
		\overline{\mathbb{E}}_{\mathcal{T}}^\mathrm{V}[\tau_{\natswith}\,\vert\,X_0] = \overline{\mathbb{E}}_{\mathcal{T}}^\mathrm{HM}[\tau_{\natswith}\,\vert\,X_0]\,,
	\end{equation*}
	and, moreover, that there is some homogeneous discrete-time Markov chain $P\in\mathcal{P}^\mathrm{HM}_\mathcal{T}$ with associated transition matrix $T={}^PT\in\mathcal{T}$ and hitting times $h=\mathbb{E}_P[\tau_{\natswith}\,\vert\,X_0]$ such that $h = \overline{\mathbb{E}}_{\mathcal{T}}^\mathrm{HM}[\tau_{\natswith}\,\vert\,X_0]$. Putting this together, we find that
	\begin{equation}\label{eq:thm:hitting_times_invariant:bounded_upper_bound_by_homogen}
		\overline{\mathbb{E}}_\rateset^\mathrm{I}[\hat{\tau}\,\vert\,X_0=x] \leq h(x)\quad\text{for all $x\in\states$.}
	\end{equation}
	By Proposition~\ref{prop:precise_discr_system}, $h$ is also the minimal non-negative solution to the system
	\begin{equation}\label{eq:thm:hitting_times_invariant:system_reaching_upper_discrete}
		h = \ind{A^c} + \ind{A^c} T h\,.
	\end{equation}
	It is immediate from the definition that $h\vert_A=0$, and since $\hat{\tau}$ is clearly non-negative, we obtain from Equation~\eqref{eq:thm:hitting_times_invariant:bounded_upper_bound_by_homogen} that for all $x\in A$ we have
	\begin{equation*}
		0 \leq \overline{\mathbb{E}}_\rateset^\mathrm{I}[\hat{\tau}\,\vert\,X_0=x] \leq h(x) = 0\,,
	\end{equation*}
	or in other words, that $\overline{\mathbb{E}}_\rateset^\mathrm{I}[\hat{\tau}\,\vert\,X_0=x]=0$ for all $x\in A$. So, it remains to bound this upper expectation on $A^c$.
	
	By our Assumption~\ref{ass:reachable}, it holds for all $x\in A^c$ that $e^{\lrate}\ind{A}(x) >0$. Since $e^{\lrate}$ is the lower transition operator corresponding to $\mathcal{T}$ due to Proposition~\ref{prop:duality_trans}, it follows that $e^{\lrate}$ satisfies conditions C1--C3 and R1 from Reference~\citep{krak2020computing}.
	We now recall that $T={}^PT\in\mathcal{T}$. Since the preconditions C1--C3 and R1 of this reference are all satisfied, we can now invoke~\cite[Lemma 10]{krak2020computing}, which states that the inverse operator $(I-T\resAc)^{-1}$ exists.
	
	We note that $h\vert_A=0$, and so $h=(h\resAc)\upX$. Hence in particular, we have $T\resAc h\resAc = (Th)\resAc$. From Equation~\eqref{eq:thm:hitting_times_invariant:system_reaching_upper_discrete}, we now find that
	\begin{equation*}
		h\resAc = \ones + (Th)\resAc = \ones + T\resAc h\resAc\,,
	\end{equation*}
	and so re-ordering terms, we have $(I-T\resAc)h\resAc = \ones$. Using the existence of the inverse operator established above, we obtain
	\begin{equation*}
		h\resAc = (I-T\resAc)^{-1}\ones\,.
	\end{equation*}
	Since $(I-T\resAc)$ is an invertible bounded linear operator, also clearly $(I-T\resAc)^{-1}$ is bounded. Hence we have
	\begin{equation*}
		\norm{h\resAc} = \norm{(I-T\resAc)^{-1}\ones} \leq \norm{(I-T\resAc)^{-1}} < +\infty\,.
	\end{equation*}
	From Equation~\eqref{eq:thm:hitting_times_invariant:bounded_upper_bound_by_homogen} we find that $\overline{\mathbb{E}}_\rateset^\mathrm{I}[\hat{\tau}\,\vert\,X_0=x]<+\infty$ for all $x\in A^c$. In summary, at this point we have shown that $\overline{\mathbb{E}}_\rateset^\mathrm{I}[\hat{\tau}\,\vert\,X_0=x]$ is bounded for all $x\in\states$. Since we already established that $\hat{\tau}$ absolutely dominates the sequence $(\tau_k)_{k\in\natswith}$, we can now finally use the limit~\eqref{eq:thm:hitting_times_invariant:concrete_limit_approximations} and the dominated convergence theorem~\cite[Thm 5.32]{erreygers2021phd} to establish that
	\begin{equation}\label{eq:thm:hitting_times_invariant:limsup_bound_on_target}
		\limsup_{k\to+\infty} \underline{\mathbb{E}}^\mathrm{I}_\rateset[\tau_k\,\vert\,X_0] \leq \underline{\mathbb{E}}^\mathrm{I}_\rateset[\tau_{\realsnonneg}\,\vert\,X_0]\,,
	\end{equation}
	and
	\begin{equation}\label{eq:thm:hitting_times_invariant:liminf_bound_on_target}
		\overline{\mathbb{E}}^\mathrm{I}_\rateset[\tau_{\realsnonneg}\,\vert\,X_0] \leq
		\liminf_{k\to+\infty} \overline{\mathbb{E}}^\mathrm{I}_\rateset[\tau_k\,\vert\,X_0]\,.
	\end{equation}
	This concludes the first part of this proof. Our next step will be to identify the limits superior and inferior in the above inequalities as corresponding to, respectively, $\underline{h}$ and $\overline{h}$.
	
	Let us start by obtaining the required result for the lower expectation. From the definition of the limit superior, there is a convergent subsequence such that
	\begin{equation}\label{eq:thm:hitting_times_invariant:limsup_limit}
		\underline{s}\coloneqq \lim_{j\to+\infty}\underline{\mathbb{E}}^\mathrm{I}_\rateset[\tau_{k_j}\,\vert\,X_0] = \limsup_{k\to+\infty} \underline{\mathbb{E}}^\mathrm{I}_\rateset[\tau_k\,\vert\,X_0]\,.
	\end{equation}
	Now fix any $j\in\natswith$, and consider the approximate function $\tau_{k_j}=\tau_{\smash{\Delta_{k_j}}}^{t_{k_j}}$. As before, this function really only depends on the system at finitely many time points; specifically, those on the grid $\nu_{\smash{\Delta_{k_j}}}^{t_{k_j}}$ over $[0,t_{k_j}]$. We can therefore again use~\cite[Thm 7.2]{krak2021phd}, to express the lower- and upper expectations of this function with respect to the imprecise-Markov chain $\mathcal{P}^\mathrm{I}_{\rateset}$, as lower- and upper expectations of a function with respect to an induced \emph{discrete}-time imprecise-Markov chain. Since the step size of this grid is now equal to $\Delta_{k_j}$ rather than one, this requires a bit more effort than before. In particular, we now need to compensate for the step-size $\Delta_{k_j}$ of the grid. Indeed, the corresponding discrete-time imprecise-Markov chain should consider steps that are implicitly of this ``length'', so we consider the model induced by the set $\mathcal{T}_{k_j}$ of transition matrices that dominate $e^{\lrate\Delta_{k_j}}$. It then remains to find an appropriate translation $\tilde{\tau}_{k_j}$ of $\tau_{k_j}$ to the domain $\Omega_{\natswith}$. 
	
	As a first observation, we note that this ``translation'' $\tilde{\tau}_{k_j}:\Omega_{\natswith}\to\reals$ should depend on the same number of time points as $\tau_{k_j}$. We note that since $t_k\in\natswith$ it holds that $t_k\in\nu^{t_{k_j}}_{\smash{\Delta_{k_j}}}$. Hence it follows from Equation~\eqref{eq:thm:hitting_times_invariant:grid_definition} that $\nu^{t_{k_j}}_{\smash{\Delta_{k_j}}}$ contains exactly $\Delta_{k_j}^{-1}t_{k_j}=2^{k_j}t_{k_j}=n_{k_j}$ time points, in addition to the origin $0$, and that $\tau_{k_j}$ depends exactly on these time points. Indeed, inspection of Equation~\eqref{eq:thm:hitting_times_invariant:approximation_definition} reveals that, by re-scaling to compensate for the step size $\Delta_{k_j}$, the quantity $\Delta_{k_j}^{-1}\tau_{k_j}(\omega)$ simply represents the natural index of the discrete grid element of $\nu^{t_{k_j}}_{\smash{\Delta_{k_j}}}$ on which $\omega\in\Omega_{\realsnonneg}$ did (or did not) initially hit $A$.   
	Adapting Equation~\eqref{eq:thm:hitting_times_invariant:approximation_definition}, we therefore define for any $\omega\in\Omega_{\natswith}$ that
	\begin{align*}
		\tilde{\tau}_{k_j}(\omega) \coloneqq \min\Bigl( \bigl\{ s\in\nu_{\Delta_{k_j}}^{t_{k_j}}\,:\,\omega\bigl(\Delta_{k_j}^{-1}s\bigr)\in A\bigr\} \cup \{t_{k_j}\}\Bigr)\,.
	\end{align*}
	We see that, as required, $\Delta_{k_j}^{-1}\tilde{\tau}_{k_j}(\omega)$ is again simply the identity of the step on which $\omega\in\Omega_{\natswith}$ did (or did not) initially hit $A$. This implies the relation to the discrete-time truncated hitting time $\tau_{0:n_{k_j}}$; for any $\omega\in\Omega_{\natswith}$ we have
	\begin{align*}
		&\tilde{\tau}_{k_j}(\omega) \\
		&= \Delta_{k_j}\Delta_{k_j}^{-1}\tilde{\tau}_{k_j}(\omega) \\
		&= \Delta_{k_j}\min\Bigl( \bigl\{ s\in\Delta_{k_j}^{-1}\nu_{\Delta_{k_j}}^{t_{k_j}}\,:\,\omega(s)\in A\bigr\} \cup \{\Delta_{k_j}^{-1}t_{k_j}\}\Bigr) \\
		&= \Delta_{k_j}\min\Bigl( \bigl\{ s\in\{0,\ldots,n_{k_j}\}\,:\,\omega\bigl(s\bigr)\in A\bigr\} \cup \{n_{k_j}\}\Bigr) \\
		&= \Delta_{k_j}\tau_{0:n_{k_j}}(\omega)\,,
	\end{align*}
	and so we simply have that $\tilde{\tau}_{k_j}=\Delta_{k_j}\tau_{0:n_{k_j}}$. Following the discussion in~\cite[Chap 7]{krak2021phd}, and~\cite[Thm 7.2]{krak2021phd} in particular, we therefore find the identity
	\begin{equation}\label{eq:thm:hitting_times_invariant:discretised_representation}
		\underline{\mathbb{E}}^\mathrm{I}_\rateset[\tau_{k_j}\,\vert\,X_0] = \Delta_{k_j}\underline{\mathbb{E}}^\mathrm{I}_{\mathcal{T}_{k_j}}[\tau_{0:n_{k_j}}\,\vert\,X_0]\quad\text{for all $j\in\natswith$.}
	\end{equation}
	We again recall from~\citet{krak2019hitting} the objects $\underline{\mathbb{E}}^\mathrm{V}_{\mathcal{T}_{k_j}}[\tau_{0:n_{k_j}}\,\vert\,X_0]$ in $\gamblesX$ satisfying
	\begin{equation*}
		\underline{\mathbb{E}}^\mathrm{V}_{\mathcal{T}_{k_j}}[\tau_{0:n_{k_j}}\,\vert\,X_0] \leq \underline{\mathbb{E}}^\mathrm{I}_{\mathcal{T}_{k_j}}[\tau_{0:n_{k_j}}\,\vert\,X_0]\quad\text{for all $j\in\natswith$.}
	\end{equation*}
	Hence from Equations~\eqref{eq:thm:hitting_times_invariant:limsup_limit} and~\eqref{eq:thm:hitting_times_invariant:discretised_representation} we now find that
	\begin{align}
		\underline{s} &= \lim_{j\to+\infty} \underline{\mathbb{E}}^\mathrm{I}_\rateset[\tau_{k_j}\,\vert\,X_0] \notag\\
		&= \lim_{j\to+\infty} \Delta_{k_j}\underline{\mathbb{E}}^\mathrm{I}_{\mathcal{T}_{k_j}}[\tau_{0:n_{k_j}}\,\vert\,X_0] \notag\\
		&\geq \lim_{j\to+\infty} \Delta_{k_j}\underline{\mathbb{E}}^\mathrm{V}_{\mathcal{T}_{k_j}}[\tau_{0:n_{k_j}}\,\vert\,X_0]\,.\label{eq:thm:hitting_times_invariant:limsup_lower_bound_homogen}
	\end{align}
	We now note that, for all $j\in\natswith$, we have
	\begin{align*}
		&\norm{\Delta_{k_j}\underline{\mathbb{E}}^\mathrm{V}_{\mathcal{T}_{k_j}}[\tau_{0:n_{k_j}}\,\vert\,X_0] - \underline{h}} \\
		&\quad\leq \norm{\Delta_{k_j}\underline{\mathbb{E}}^\mathrm{V}_{\mathcal{T}_{k_j}}[\tau_{0:n_{k_j}}\,\vert\,X_0] - \underline{h}_{\Delta_{k_j}}} + \norm{\underline{h}_{\Delta_{k_j}} - \underline{h}} \\
		&\quad< \epsilon_{k_j} + \norm{\underline{h}_{\Delta_{k_j}} - \underline{h}}\,,
	\end{align*}
	where we used Equation~\eqref{eq:thm:hitting_times_invariant:lower_limit_discrete_by_game} for the final inequality. Using that $\lim_{j\to+\infty}\epsilon_{k_j}=0$ and $\lim_{j\to+\infty}\Delta_{k_j}=0$, together with Proposition~\ref{prop:imprecise_limit}, we see that both summands vanish as we increase $j\in\natswith$, and so we have
	\begin{equation}\label{eq:thm:hitting_times_invariant:limit_to_homogen}
		\lim_{j\to+\infty} \Delta_{k_j}\underline{\mathbb{E}}^\mathrm{V}_{\mathcal{T}_{k_j}}[\tau_{0:n_{k_j}}\,\vert\,X_0] = \underline{h}\,.
	\end{equation}
	We already established in Section~\ref{sec:hits_as_limits} that $\underline{h}=\underline{\mathbb{E}}^\mathrm{HM}_\rateset[\tau_{\realsnonneg}\,\vert\,X_0]$. Hence by combining Equations~\eqref{eq:thm:hitting_times_invariant:limsup_bound_on_target}, \eqref{eq:thm:hitting_times_invariant:limsup_limit}, \eqref{eq:thm:hitting_times_invariant:limsup_lower_bound_homogen}, and~\eqref{eq:thm:hitting_times_invariant:limit_to_homogen}, we now find that
	\begin{equation}\label{eq:thm:hitting_times_invariant:reverse_bound}
		\underline{\mathbb{E}}^\mathrm{HM}_\rateset[\tau_{\realsnonneg}\,\vert\,X_0] = \underline{h} \leq \underline{s} \leq \underline{\mathbb{E}}^\mathrm{I}_\rateset[\tau_{\realsnonneg}\,\vert\,X_0]\,.
	\end{equation}
	However, as noted in Section~\ref{subsec:imc_sets_types} we have the inclusion $\mathcal{P}^\mathrm{HM}_\rateset\subseteq \mathcal{P}^\mathrm{M}_\rateset\subseteq \mathcal{P}^\mathrm{I}_{\rateset}$, so it immediately follows from the definition of the lower expectations that
	\begin{equation*}
		\underline{\mathbb{E}}^\mathrm{I}_\rateset[\tau_{\realsnonneg}\,\vert\,X_0] \leq \underline{\mathbb{E}}^\mathrm{M}_\rateset[\tau_{\realsnonneg}\,\vert\,X_0] \leq \underline{\mathbb{E}}^\mathrm{HM}_\rateset[\tau_{\realsnonneg}\,\vert\,X_0]\,.
	\end{equation*}
	Hence by Equation~\eqref{eq:thm:hitting_times_invariant:reverse_bound} we obtain the identity
	\begin{equation*}
		\underline{\mathbb{E}}^\mathrm{I}_\rateset[\tau_{\realsnonneg}\,\vert\,X_0] = \underline{\mathbb{E}}^\mathrm{M}_\rateset[\tau_{\realsnonneg}\,\vert\,X_0] = \underline{\mathbb{E}}^\mathrm{HM}_\rateset[\tau_{\realsnonneg}\,\vert\,X_0]\,,
	\end{equation*}
	which concludes the proof that the lower expected hitting times are the same for all three types of continuous-time imprecise-Markov chains. We omit the proof for the upper expected hitting times; this is completely analogous, starting instead from Equation~\eqref{eq:thm:hitting_times_invariant:liminf_bound_on_target} and using the norm bound~\eqref{eq:thm:hitting_times_invariant:upper_limit_discrete_by_game} to pass to the limit $\overline{h}=\overline{\mathbb{E}}^\mathrm{HM}_\rateset[\tau_{\realsnonneg}\,\vert\,X_0]$.
\end{proof}

\begin{proof}[Proof of Theorem~\ref{thm:lower_upper_hitting_system}]
	First fix any $\Delta>0$. For $\underline{h}_\Delta$ it then holds that
	\begin{align*}
		\underline{h}_{\Delta} = \Delta\ind{A^c}+\ind{A^c} e^{\lrate \Delta}\underline{h}_{\Delta}\,.
	\end{align*}
	Since $\underline{h}_\Delta(x)=0$ for all $x\in A$, we have $\underline{h}_\Delta=\ind{A^c} \underline{h}_\Delta$ and $\ind{A}\underline{h}_\Delta=0$. We can therefore rearrange terms and add $\ind{A}\underline{h}_\Delta$ to the above, to obtain
	\begin{equation*}
		\ind{A}\underline{h}_\Delta = \ind{A^c} + \ind{A^c}\frac{e^{\lrate\Delta} - I}{\Delta}\underline{h}_{\Delta}\,.
	\end{equation*}
	Because the individual limits exist by Proposition~\ref{prop:imprecise_limit} and~\citep[Prop 9]{de2017limit}, taking $\Delta$ to zero yields 
	\begin{align*}
		\ind{A}\underline{h} &= \ind{A}\lim_{\Delta\to 0^+} \underline{h}_\Delta \\
		&= \ind{A^c} + \ind{A^c}\lim_{\Delta\to 0^+}\frac{e^{\lrate\Delta} - I}{\Delta}\underline{h}_{\Delta} \\
		&= \ind{A^c} + \ind{A^c}\lim_{\Delta\to 0^+}\frac{e^{\lrate\Delta} - I}{\Delta}\lim_{\Delta\to 0^+}\underline{h}_\Delta \\
		&= \ind{A^c} + \ind{A^c}\lrate \,\underline{\vphantom{Q}h}\,.
	\end{align*}
	So, $\underline{h}$ is indeed a solution to the system $\ind{A}\underline{h}=\ind{A^c}+\ind{A^c}\lrate\,\underline{\vphantom{Q}h}$. It follows from a completely analogous argument that also $\overline{h}$ is a solution to $\ind{A}\overline{h}=\ind{A^c}+\ind{A^c}\urate\,\overline{h}$.
	
	That $\underline{h}$ and $\overline{h}$ are non-negative is clear. We now first show that $\underline{h}$ is the minimal non-negative solution to its corresponding system. 
	To this end, suppose \emph{ex absurdo} that there is some non-negative $h\in\gamblesX$ such that $h(x)<\underline{h}(x)$ for some $x\in\states$, and $\ind{A}h=\ind{A^c}+\ind{A^c}\lrate h$. Then clearly $x\in A^c$ since $\underline{h}(y)=0$ for all $y\in A$ and $h$ is non-negative.
	
	By Proposition~\ref{prop:duality_rate}, there is then some $Q\in\rateset$ such that $Qh=\lrate h$, and so also $\ind{A}h=\ind{A^c}+\ind{A^c}Qh$. By Proposition~\ref{prop:precise_cont_system} there is some minimal non-negative solution $h_*$ to the system $\ind{A}h_*=\ind{A^c}+\ind{A^c}Qh_*$, where the minimality implies in particular that $h_*\leq h$. Since $Q\in\rateset$, we obtain
	\begin{equation*}
		h_*(x) \leq h(x) < \underline{h}(x) = \inf_{Q'\in\rateset} h^{Q'}(x) \leq h_*(x)\,,
	\end{equation*}
	which is a contradiction.
	
	We next show that $\overline{h}$ is the minimal non-negative solution to its corresponding system; this will require a bit more effort and we need to start with some auxiliary constructions. By Proposition~\ref{prop:duality_rate}, there is some $Q\in\rateset$ such that $Q\overline{h}=\urate\, \overline{h}$. Let $G$ be the subgenerator of $Q$. 
	
	Consider the $\xi>0$ from Proposition~\ref{prop:subsemigroup_ues}, and let $\norm{\cdot}_*$ be the norm from Section~\ref{sec:quasicontractive}. Since $\gamblesAc$ is finite-dimensional, the norms $\norm{\cdot}$ and $\norm{\cdot}_*$ are equivalent, whence there is some $c\in\realspos$ such that $\norm{f}_*\leq c\norm{f}$ for all $f\in\gamblesAc$. 
	
	Now let $\Delta>0$ be such that $\Delta\norm{Q}\leq 1$, $\Delta\norm{\lrate}\leq 1$, $\Delta\xi<\nicefrac{2}{3}$, and $\Delta c\norm{Q}^2<\nicefrac{\xi}{3}$; this is clearly always possible. Define the map $H:\gamblesAc\to\gamblesAc$ for all $f\in\gamblesAc$ as
	\begin{equation*}
		H(f) \coloneqq f + \Delta\mathbf{1} + \Delta Gf = \Delta\ones + (I+\Delta G)f\,.
	\end{equation*}
	Let us show that $H$ is a contraction on the Banach space $(\gamblesAc, \norm{\cdot}_*)$, or in other words, that there is some $\alpha\in[0,1)$ such that $\norm{H(f)-H(g)}_*\leq \alpha\norm{f-g}_*$ for all $f,g\in\gamblesAc$~\cite[Sec 10.1.1]{renardyrogers2004intropde}. So, fix any $f,g\in\gamblesAc$. Then we have
	\begin{align*}
		\norm{H(f) - H(g)}_* &= \norm{(I+\Delta G)f - (I+\Delta G)g}_* \\
		&= \norm{(I+\Delta G)(f-g)}_* \\
		&\leq \norm{I+\Delta G}_*\norm{f-g}_*\,,
	\end{align*}
	from which we find that
	\begin{align}
		&\norm{H(f) - H(g)}_* \notag\\
		&\quad\quad\leq \left(\norm{(I+\Delta G) - e^{G\Delta}}_* + \norm{e^{G\Delta}}_*\right)\norm{f-g}_*\,.\label{eq:thm:lower_upper_hitting_system:bound_H_contract}
	\end{align}
	By Proposition~\ref{prop:precise_quasicontractive} we have $\norm{e^{G\Delta}}_*\leq e^{-\xi\Delta}$, and so using a standard quadratic bound on the negative scalar exponential,
	\begin{equation}
		\norm{e^{G\Delta}}_* \leq 1 - \Delta\xi + \frac{1}{2}\Delta^2\xi^2 < 1 - \frac{2}{3}\Delta\xi\,,\label{eq:thm:lower_upper_hitting_system:bound_H_quasicontract}
	\end{equation} 
	where we used that $\Delta\xi < \nicefrac{2}{3}$.
	
	Moreover, we have that
	\begin{align*}
		\norm{(I+\Delta G) - e^{G\Delta}}_* &\leq c\norm{(I+\Delta G) - e^{G\Delta}} \\
		&\leq c\norm{(I+\Delta Q) - e^{Q\Delta}} \\
		&\leq c\Delta^2\norm{Q}^2\,,
	\end{align*}
	where the second inequality used Lemmas~\ref{lemma:linear_approx_identity_on_A} and~\ref{lemma:restriction_norm_bound} and Corollary~\ref{cor:exponential_identity_on_A}; and the final inequality used~\cite[Lemma B.8]{krak2021phd}. Since $\Delta c\norm{Q}^2<\nicefrac{\xi}{3}$ we have
	\begin{equation}
		\norm{(I+\Delta G) - e^{G\Delta}}_* < \frac{1}{3}\Delta\xi\,.\label{eq:thm:lower_upper_hitting_system:bound_H_contract_trans}
	\end{equation}
	Combining Equations~\eqref{eq:thm:lower_upper_hitting_system:bound_H_contract},~\eqref{eq:thm:lower_upper_hitting_system:bound_H_quasicontract}, and~\eqref{eq:thm:lower_upper_hitting_system:bound_H_contract_trans} we obtain
	\begin{equation*}
		\norm{H(f) - H(g)}_* \leq \left(1-\frac{1}{3}\Delta\xi\right)\norm{f-g}_*\,.
	\end{equation*}
	Since $\Delta>0$, $\xi>0$, and $\Delta\xi<\nicefrac{2}{3}$, we conclude that $H$ is indeed a contraction. Hence by the Banach fixed-point theorem~\cite[Thm 10.1]{renardyrogers2004intropde}, there is a unique fixed point $f\in\gamblesAc$ such that $H(f)=f$ and, for any $g\in\gamblesAc$, it holds that
	\begin{equation}\label{eq:thm:lower_upper_hitting_system:abstract_banach_limit}
		\lim_{n\to+\infty} H^n(g) = f\,.
	\end{equation}
	It is easy to see that this unique fixed point is given by $\overline{h}\resAc$. Indeed, from the choice of $Q$ and the fact that $\overline{h}$ satisfies $\ind{A}\overline{h}=\ind{A^c}+\ind{A^c}\urate\,\overline{h}$, we have 
	\begin{equation*}
		\ind{A}\overline{h}=\ind{A^c}+\ind{A^c} Q\overline{h} =\ind{A^c}+ \ind{A^c}\urate\,\overline{h}\,.
	\end{equation*}
	Moreover, since $\overline{h}\vert_A=0$ we have $\ind{A}\overline{h}=0$ and $\overline{h}=(\overline{h}\resAc)\upX$, whence
	\begin{equation*}
		\bigl(Q\,\overline{h}\bigr)\resAc = 	\bigl(Q\,(\overline{h}\resAc)\upX\bigr)\resAc = G\,\overline{h}\resAc\,.
	\end{equation*}
	Noting that $\ind{A^c}\resAc=\ones$, after multiplying with $\Delta$ we find that $\Delta\ones + \Delta G\,\overline{h}\resAc=0$. Comparing with the definition of $H$, we have
	\begin{align*}
		H(\overline{h}\resAc) = \overline{h}\resAc + \Delta\ones + \Delta G\,\overline{h}\resAc = \overline{h}\resAc + 0 = \overline{h}\resAc\,,
	\end{align*}
	so $\overline{h}\resAc$ is indeed a fixed-point of $H$. Since we already established that $H$ has a unique fixed-point, we conclude from Equation~\eqref{eq:thm:lower_upper_hitting_system:abstract_banach_limit} that
	\begin{equation}\label{eq:thm:lower_upper_hitting_system:concrete_banach_limit}
		\overline{h}\resAc = \lim_{n\to+\infty} H^n(g)\quad\text{for all $g\in\gamblesAc$.}
	\end{equation}
	Next let us show that $H$ is monotone. To this end, fix any $f,g\in\gamblesAc$ such that $f\leq g$; then clearly also $f\upX \leq g\upX$. Since $\Delta>0$ is such that $\Delta\norm{Q}\leq 1$, it follows from~\cite[Prop 4.9]{krak2021phd} that $(I+\Delta Q)$ is a transition matrix. By the monotonicity of transition matrices~\cite[Prop 3.32]{krak2021phd}, we find that therefore
	\begin{equation*}
		(I+\Delta Q)f\upX \leq (I+\Delta Q)g\upX\,,
	\end{equation*}
	which in turn implies that
	\begin{align*}
		(I+\Delta G)f &= \bigl((I+\Delta Q)f\upX\bigr)\resAc \\
		&\leq \bigl((I+\Delta Q)g\upX\bigr)\resAc = (I+\Delta G)g\,.
	\end{align*}
	From the definition of $H$ we therefore conclude that $H(f)\leq H(g)$. Since $f,g$ with $f\leq g$ are arbitrary, this concludes the proof of the monotonicity of $H$.

	Now, let us define $\overline{H}:\gamblesX\to\gamblesX$ for all $f\in\gamblesX$ as
	\begin{equation*}
		\overline{H}(f) \coloneqq \Delta\ind{A^c} + \ind{A^c}(I+\Delta\urate)f\,.
	\end{equation*}
	We first note that, since $\Delta\norm{\lrate}\leq 1$, it follows from~\cite[Prop 5]{de2017limit} that $(I+\Delta\lrate)$ is a lower transition operator. From the conjugacy of $\lrate$ and $\urate$, we have for any $f\in\gamblesX$ that
	\begin{align*}
		-(I+\Delta\lrate)(-f) &= f + \Delta\cdot-\lrate(-f) \\
		&= f + \Delta\urate f = (I+\Delta\urate)f\,,
	\end{align*}
	which implies that $(I+\Delta\urate)$ is the upper transition operator that is conjugate to the lower transition operator $(I+\Delta\lrate)$. By the monotonicity of upper transition operators---see Section~\ref{subsec:imprecise_dynamics}---this implies that $\overline{H}$ is monotone, or in other words that for all $f,g\in\gamblesX$ with $f\leq g$ it holds that $\overline{H}(f)\leq\overline{H}(g)$.
	
	Let us next show that
	\begin{equation}\label{eq:thm:lower_upper_hitting_system:bound_different_iters}
		\overline{H}(f) \geq H(f\resAc)\upX\quad \text{for all $f\in\gamblesX$ with $f\vert_A=0$.}
	\end{equation}
	Indeed, if $f\vert_A=0$ then $(f\resAc)\upX = f$, and since $Q\in\rateset$, it follows from the definitions of $\urate$ and $G$ that then
	\begin{equation*}
		(\urate\,f)\resAc \geq (Q\,f)\resAc = (Q\,(f\resAc)\upX)\resAc = Gf\resAc\,.
	\end{equation*}
	Hence we have
	\begin{align*}
		\overline{H}(f)\resAc &= \Delta\ones + f\resAc + \Delta(\urate\,f)\resAc \\
		&\geq \Delta\ones + f\resAc + \Delta Gf\resAc = H(f\resAc)\,.
	\end{align*}
	Moreover, we immediately have from the definition that $\overline{H}(f)\vert_A = 0 = \bigl(H(f\resAc)\upX\bigr)\vert_A$, and so Equation~\eqref{eq:thm:lower_upper_hitting_system:bound_different_iters} indeed holds.
	
	Next, we note that for any $f\in\gamblesX$ it holds that $\overline{H}(f)\vert_A=0$, and so by Equation~\eqref{eq:thm:lower_upper_hitting_system:bound_different_iters} we find that
	\begin{equation*}
		\overline{H}(\overline{H}(f)) \geq H(\overline{H}(f)\resAc)\upX\,.
	\end{equation*}
	Provided that also $f\vert_A=0$, then using the previously established monotonicity of $H$ we obtain
	\begin{align*}
		\overline{H}(\overline{H}(f)) &\geq H(\overline{H}(f)\resAc)\upX \\
		&\geq H(H(f\resAc)\upX\resAc)\upX\,.
	\end{align*}
	We clearly have $H(f\resAc)\upX\resAc = H(f\resAc)$, whence
	\begin{equation*}
		\overline{H}^2(f) = \overline{H}(\overline{H}(f)) \geq H(H(f\resAc))\upX = (H^2(f\resAc))\upX\,.
	\end{equation*}
	Indeed, we can repeat this reasoning for $n\in\nats$ steps, to conclude that
	\begin{equation}\label{eq:thm:lower_upper_hitting_system:iterative_bound}
		\overline{H}^n(f) \geq (H^n(f\resAc))\upX\quad\text{for all $f\in\gamblesX$ with $f\vert_A=0$.}
	\end{equation}
	
	Now suppose \emph{ex absurdo} that there is some non-negative $g\in\gamblesX$, such that $g(x)<\overline{h}(x)$ for some $x\in\states$, and such that $\ind{A}g=\ind{A^c}+\ind{A^c} \urate\,g$. Since $g$ is non-negative and $\overline{h}\vert_A=0$, we must have that $x\in A^c$. Moreover, we clearly have $\ind{A}g=0$, which implies that $g\vert_A=0$ and so $g=\ind{A^c}g$. Hence it follows that $\Delta\ind{A^c} + \ind{A^c}\Delta\urate g = 0$, and we find that $\overline{H}(g) = \ind{A^c}g=g$.
	Hence $g$ is a fixed point of $\overline{H}$. Since $g\vert_A=0$, and using Equation~\eqref{eq:thm:lower_upper_hitting_system:iterative_bound}, this implies that for any $n\in\natswith$ we have
	\begin{equation*}
		g = \overline{H}^n(g) \geq (H^n(g\resAc))\upX\,.
	\end{equation*}
	Recalling that $x\in A^c$ is such that $g(x) < \overline{h}(x)$, we use Equation~\eqref{eq:thm:lower_upper_hitting_system:concrete_banach_limit} to take limits in $n$ and find that
	\begin{equation*}
		g(x) \geq \lim_{n\to+\infty} H^n(g\resAc)(x) = \overline{h}(x) > g(x)\,,
	\end{equation*}
	which is a contradiction.
\end{proof}

\end{document}